\documentclass[letterpaper, 12pt]{article}

%Packages
\usepackage[margin=1in]{geometry}
\usepackage{amsmath,amssymb,amsthm, amsfonts}
\usepackage{mathtools}

\usepackage[colorlinks=true, linkcolor=teal, citecolor=-teal,
pagebackref=true]{hyperref}
\usepackage{tikz-cd}
\usepackage{enumitem}

\usepackage[reftex]{theoremref}

% Font Packages

\usepackage{charter}
\usepackage{newtxmath}

%\usepackage{mlmodern}

% \usepackage{fouriernc}

% \usepackage[osf,largesc,theoremfont]{newpxtext}
% \usepackage[scaled=1.03]{inconsolata}
% \usepackage[euler-digits,euler-hat-accent]{eulervm}

% Theorems
\newtheorem{theorem}{Theorem}[section]
\newtheorem*{theorem*}{Theorem}
\newtheorem{lemma}[theorem]{Lemma}
\newtheorem{proposition}[theorem]{Proposition}

\newtheorem{conjecture}[theorem]{Conjecture}
\newtheorem{claim}[theorem]{Claim}

\theoremstyle{definition}
\newtheorem{example}{Example}[section]
\newtheorem*{example*}{Example}

\theoremstyle{remark}
\newtheorem*{remark*}{Remark}

\theoremstyle{definition}
\newtheorem{definition}{Definition}[section]
\newtheorem*{definition*}{Definition}

% Macros

\def\eps{{\varepsilon}}
\def\ga{{\alpha}}
\def\bga{{\boldsymbol \alpha}}
\def\gd{{\delta}}
\def\gk{{\kappa}}

\newcommand*\diff{\mathop{}\!\mathrm{d}}

\newcommand{\calA}{\mathcal A}

\newcommand{\calE}{\mathcal E}

\newcommand{\calK}{\mathcal K}

\newcommand{\p}{\mathbf p}
\newcommand{\q}{\mathbf q}

\newcommand{\ba}{\mathbf a}
\newcommand{\bb}{\mathbf b}

\newcommand{\br}{\mathbf r}
\newcommand{\bs}{\mathbf s}
\newcommand{\bv}{\mathbf v}
\newcommand{\bw}{\mathbf w}

\newcommand{\X}{\mathbf X}
\newcommand{\Y}{\mathbf Y}
\newcommand{\Z}{\mathbf Z}
\newcommand{\y}{\mathbf y}
\newcommand{\NN}{\mathbb N}
\newcommand{\PP}{\mathbb P}
\newcommand{\RR}{\mathbb R}
\newcommand{\TT}{\mathbb T}
\newcommand{\ZZ}{\mathbb Z}
\newcommand{\Zpos}{\mathbb Z_+}

\newcommand{\Rpos}{\mathbb R_+}
\newcommand{\bone}{\mathbf 1}
\newcommand{\bzero}{\mathbf 0}

% Slanted geqleq

\renewcommand{\leq}{\leqslant}
\renewcommand{\geq}{\geqslant}
\renewcommand{\subset}{\subseteq}
\renewcommand{\supset}{\supseteq}

% Paired delimiters

\DeclarePairedDelimiter{\abs}{\lvert}{\rvert}
\DeclarePairedDelimiter{\parens}{\lparen}{\rparen}

\DeclarePairedDelimiter{\set}{\lbrace}{\rbrace}
\DeclarePairedDelimiter{\norm}{\lVert}{\rVert}

% Operators

\DeclareMathOperator{\Leb}{m}

\DeclareMathOperator{\lcm}{lcm}

% title&author

\title{Metric bootstraps for limsup sets}

\author{Felipe A.~Ram{\'i}rez \\ Wesleyan University }

% \address{Wesleyan University \\ Middletown, CT, USA}
% \email{framirez@wesleyan.edu}

% Subject classification and keywords

\makeatletter
\@namedef{subjclassname@2020}{\textup{2020} Mathematics Subject Classification}
\makeatother

% \subjclass[2020]{Primary: 11J83, 11J13, 11K60} \keywords{Metric
% number theory, Diophantine approximation, systems of linear forms,
% inhomogeneous Duffin--Schaeffer conjecture}

%\date{}

\date{\footnotesize{\it In admiration of Khintchine, whose theorem has
    inspired a century's worth of research}}

\begin{document}

\frenchspacing

\maketitle
\thispagestyle{empty}

\begin{abstract}
  In metric Diophantine approximation, one frequently encounters the
  problem of showing that a limsup set has positive or full
  measure. Often it is a set of points in $m$-dimensional Euclidean
  space, or a set of $n$-by-$m$ systems of linear forms, satisfying
  some approximation condition infinitely often. The main results of
  this paper are bootstraps: if one can establish positive measure for
  such a limsup set in $m$-dimensional Euclidean space, then one can
  establish positive or full measure for an associated limsup set in
  the setting of $n$-by-$m$ systems of linear forms.  Consequently, a
  class of $m$-dimensional results in Diophantine approximation can be
  bootstrapped to corresponding $n$-by-$m$-dimensional results. This
  leads to short proofs of existing, new, and hypothetical theorems
  for limsup sets that arise in the theory of systems of linear
  forms. We present several of these.
\end{abstract}

\tableofcontents

%\newpage
\section*{Notation and conventions}

\begin{itemize}
\item $\NN$ denotes the positive integers.
  
\item $\Zpos$ and $\Rpos$ denote the nonnegative integers and the
  nonnnegative reals.
  
\item $\Leb$ denotes Lebesgue measure, possibly with a subscript when the
  dimension is not clear from the context.
  
\item $\abs{\cdot}$ denotes the maximum norm, with respect to which
  all distances are to be understood.
\end{itemize}
\section{Introduction and results}

\subsection{Setting}
\label{sec:setting-background}

Consider the collection of $n$-by-$m$ systems of linear forms that
send infinitely many integer vectors into some prescribed union of
target balls in $\RR^m$, that is, the set of matrices
$\X \in M_{n\times m}(\RR)$ for which there are infinitely many
$\q\in\ZZ^n$ such that
\begin{equation}\label{eq:one}
  \q\X  \in B(\bzero, \psi(\q)) + P(\q),
\end{equation}
where $\psi:\ZZ^n\to\Rpos$ is a given function,
$B(\bzero, \psi(\q))\subset\RR^m$ is the ball of radius
$\psi(\q)\geq 0$ centered at the origin $\bzero\in\RR^m$, and
$P(\q)\subset \RR^m$ is a discrete set determining which translates of
the ball are acceptable targets. That set, which is denoted here by
\begin{equation*}
  W_{n,m}^P(\psi) = \set*{\X \in M_{n\times m}(\RR) : \textrm{ (\ref{eq:one}) holds for infinitely many }\q \in \ZZ^n},
\end{equation*}
is at the heart of \emph{asymptotic metric Diophantine
  approximation}. Many problems in the field concern sets of the form
$W_{n,m}^P(\psi)$ for various choices of $P$. For classical
\emph{homogeneous approximation} one has $P(\q)$ a subset of $\ZZ^m$,
while for \emph{inhomogeneous approximation} $P(\q)$ is a subset of
$\y + \ZZ^m$ where $\y\in\RR^m$ is a fixed inhomogeneous
parameter. Often, the definition of $P$ involves coprimality or
congruence conditions on the entries of its members.

The main question addressed here is \emph{what is the Lebesgue measure
  of $W_{n,m}^P(\psi)$?} It should be noted that $W_{n,m}^P(\psi)$ is
the limsup set corresponding to the sets
\begin{equation*}
  A(\q) = \set*{\X \in M_{n\times m}(\RR) : \textrm{ (\ref{eq:one}) holds}}.
\end{equation*}
As such, there are cases where the question is answered immediately by
the Borel--Cantelli lemma. Namely, if for every ball
$B\subset M_{n\times m}(\RR)$, the series $\sum_\q \Leb (A(\q)\cap B)$
converges, then by the Borel--Cantelli lemma, $W_{n,m}^P(\psi)$ has
zero measure. 

The focus, therefore, is on situations where the series diverges, and
the goal is always to show that $W_{n,m}^P(\psi)$ has positive or full
measure. Our main result is that if that goal can be achieved in the
$1$-by-$m$ setting, then it can also be achieved in the $n$-by-$m$
setting for all $n\geq 1$.

\subsection{Metric dichotomies}
\label{sec:metric-dichotomies}

The prototypical result for sets of the form $W_{n,m}^P(\psi)$ is a
zero-full metric dichotomy: an assertion that the measure is either
zero or full, together with a criterion to determine which of the two
it is. It is usually determined by the convergence or divergence of a
series associated to $\psi$ and $P$---a series which is understood as
a proxy for the measure sum mentioned above.

The seminal example is Khintchine's theorem (1924,
1926,~\cite{Khintchineonedimensional,Khintchine}). It says that if
$\psi:\NN\to\Rpos$ is nonincreasing, and $P(q) = \ZZ^m$ for all
$q\in\NN$, then $\Leb(W_{1,m}^P(\psi))$ is zero or full depending on
the convergence or divergence of the series $\sum_{q} \psi(q)^m$. The
$n$-by-$m$ version of Khintchine's theorem is known as the
Khintchine--Groshev Theorem (1938,~\cite{Groshev}). It says, for
functions $\psi:\ZZ^n\to\Rpos$ only depending on $\abs{\q}$ and
satisfying a monotonicity condition, and with $P(\q) = \ZZ^m$ as
before, that $\Leb(W_{n,m}^P(\psi))$ is zero or full depending on the
convergence or divergence of the series $\sum_{q}q^{n-1}\psi(q)^m$.

An inhomogeneous version of the Khintchine--Groshev theorem can be
found in the work of Sprindzuk (1979,~\cite{Sprindzuk}). The statement
is exactly that of the (homogeneous) Khintchine--Groshev theorem,
except that $P(\q) = \y + \ZZ^m$ for every $\q\in\ZZ^n$, where
$\y\in\RR^m$ is a fixed inhomogeneous parameter. The $1$-by-$1$ case
had been proved in 1958 by Sz{\"u}sz~\cite{SzuszinhomKT} and the
$1$-by-$m$ case had been proved in 1964 by Schmidt~\cite{Schmidt};
these constitute the inhomogeneous Khintchine theorem.

Note that when $P(\q) = \y+\ZZ^m$, all integer translates of the ball
$B(\y, \psi(\q))$ are admissible targets for $\q\X$. Other classical
settings impose restrictions involving coprimality, such as in the
Duffin--Schaeffer conjecture (1941,~\cite{duffinschaeffer}), now a
theorem due to Koukoulopoulos--Maynard (2020,~\cite{KMDS}). It says
that if $\psi:\NN\to\Rpos$ and $P(q) = \set{p\in\ZZ : \gcd(p,q)=1}$
for all $q\in\NN$, then $\Leb(W_{1,1}^P(\psi))$ is zero or full
depending on the convergence or divergence of the series
$\sum_{q} \varphi(q)\psi(q)/q$, where $\varphi$ is Euler's totient
function. The $m$-dimensional ($m>1$) version of this was proved by
Pollington and Vaughan in 1990~\cite{PollingtonVaughan} and the
corresponding $n$-by-$m$ version was proved recently by
the author~\cite{ramirez2023duffinschaeffer}, after having been
conjectured by Beresnevich, Bernik, Dodson, and Velani in
2009~\cite{BBDV}.

There are many other such dichotomies. For example, the monotonicity
assumption appearing in the Khintchine--Groshev theorem has been the
subject of much research, and versions where that assumption is
relaxed have appeared in~\cite{BVKG,Sprindzuk}. In 2010,
Beresnevich--Velani proved a multivariate---meaning $\psi(\q)$ does
not have to depend only on $\abs{\q}$---version with no monotonicity
assumption whenever $m>1$~\cite{BVKG}. The $n=1$ cases of this had
been established by Gallagher in 1965~\cite{Gallagherkt}. In 2023
Allen and the author proved a version of the (univariate)
inhomogeneous Khintchine--Groshev theorem without monotonicity
assumptions whenever $nm>2$~\cite{allen2021independence}. The $n=1$
cases of this had been proved by Yu in 2019~\cite{Yu}.

\subsection{Main results}

The purpose of this article is to show that for metric dichotomies,
the $1$-by-$m$ theory underpins the $n$-by-$m$ theory: the $n$-by-$m$
theorems in the previous section follow from their $1$-by-$m$ cases;
several $n$-by-$m$ theorems that do not already appear in the
literature follow from $1$-by-$m$ theorems that do; and certain open
conjectures in the $1$-by-$m$ setting give rise to their $n$-by-$m$
counterparts.

All of this follows from applying our main results,
\th\ref{thm:bootstrap,thm:bootstrap11}. Briefly, these state that for
a given function $\psi:\Zpos^n\to\Rpos$, there is an associated family
of functions $\Psi_Q:\NN \to [0,\infty]$ ($Q\geq 1$) such that if
$W_{1,m}^P(\Psi_1)$ has positive measure then so does
$W_{n,m}^P(\psi)$, and if $\Leb\parens{W_{1,m}^P(\Psi_Q)}$ is bounded away from $0$
for $Q\geq 1$ then $W_{n,m}^P(\psi)$ has full measure.

\

\th\ref{thm:bootstrap,thm:bootstrap11} apply when $P$ conforms to the
following definition.

\begin{definition}\th\label{def:well}
  For each $d\geq 1$, let $P(d)\subset(\RR/d\ZZ)^m$ be a nonempty
  finite set, and define for each $\q\in\Zpos^n$ the set $P(\q)$ to be
  the lift of $P(\gcd(\q))$ to $\RR^m$. We say the sequence of sets
  $P:= (P(d))_{d\in\NN}$ is \emph{relatively uniformly discrete} if
  there exist positive real numbers $b, c>0$ such that for every
  $d\geq 1$ there is a subset $P'(d) \subset P(d)$ with
  \begin{equation}\th\label{eq:udiscrete}
    \#P'(d) \geq c \#P(d) \qquad\textrm{ and }\qquad b \leq \abs{\p_1
      - \p_2} \qquad\textrm{ for all distinct }  \p_1, \p_2\in P'(\q),
  \end{equation}
  where $\q\in\ZZ^n$ is any vector with $\gcd(\q)=d$ and $P'(\q)$
  denotes the lift of $P'(d)$ to $\RR^m$. If, moreover, there exists
  $a>0$ such that for all $d\geq 1$,
  \begin{equation}\label{eq:well-spread}
    \frac{ad}{(\#P(d))^{1/m}} \leq \abs{\p_1 - \p_2} \qquad\textrm{ for all distinct }\p_1, \p_2\in P'(\q)\quad (\gcd(\q)=d),
  \end{equation}
  then we will say $P$ is \emph{relatively well-spread}. If these
  conditions can be accomplished with $c=1$, then we drop `relatively'
  from the terminology.
\end{definition}

\begin{remark*}
  Well-spreadedness means that it is possible to occupy a fixed
  positive proportion of $(\RR/d\ZZ)^m$ by placing disjoint balls
  around the points in $P(d)$. The condition of relative
  well-spreadedness is satisfied in all the metric dichotomies
  discussed in \S\ref{sec:metric-dichotomies}.
\end{remark*}

Notice that if $P(\q)$ is the lift to $\RR^m$ of a finite subset of
$(\RR/\gcd(\q)\ZZ)^m$, then~(\ref{eq:one}) is a $\ZZ^{mn}$-periodic
condition on $\X$. If this is true of each $\q\in\ZZ^n$, then
$W_{n,m}^P(\psi)$ is a $\ZZ^{mn}$-periodic set. In this case, it is
convenient to redefine it as
\begin{equation*}
  W_{n,m}^P(\psi) = \set*{\X \in \TT^{nm}\subset M_{n\times m}(\RR) : \textrm{ (\ref{eq:one}) holds for infinitely many }\q \in \ZZ^n},
\end{equation*}
where $\TT^{nm}$ is identified with the elements of
$M_{n\times m}(\RR)$ whose entries lie in $[0,1]$. The main theorems
are stated with respect to this view of $W_{n,m}^P(\psi)$.

The last definition to be introduced before stating the results is
that of the auxiliary functions $\Psi_Q$. Given $m,n\in\NN$,
$\psi:\ZZ^n\to\Rpos$, and $Q\geq 1$, define
$\Psi_Q:\NN\to[0,\infty]$ by
\begin{equation}\label{eq:Psidef}
  \Psi_Q(d) =\parens*{\sum_{\substack{\q\in\ZZ^n \\ \gcd(\q)=d \\ \abs*{\q/\gcd(\q)} \geq Q}}\psi(\q)^m}^{1/m}.
\end{equation}
Let $\Psi:=\Psi_1$.

\begin{theorem}\th\label{thm:bootstrap}
  Fix $m,n \in\NN$.
  \begin{enumerate}[label = \alph*)]
  \item If $P$ is relatively well-spread, then for functions
    $\psi:\Zpos^n\to\Rpos$,
    \begin{equation*}
      \Leb(W_{1,m}^P(\Psi)) > 0 \qquad\implies\qquad \Leb(W_{n,m}^P(\psi)) > 0.
    \end{equation*}
    
  \item If $P$ is relatively uniformly discrete, then for bounded
    functions $\psi:\Zpos^n\to\Rpos$,
    \begin{equation*}
      \Leb(W_{1,m}^P(\Psi)) > 0 \qquad\implies\qquad  \Leb(W_{n,m}^P(\psi)) >0.
    \end{equation*}
  \end{enumerate}
\end{theorem}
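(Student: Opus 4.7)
The plan is to parameterize each nonzero $\q\in\ZZ^n$ as $\q=d\q'$ with $d=\gcd(\q)$ and $\q'$ primitive, and thereby transfer the $n$-by-$m$ problem to a family of $1$-by-$m$ slice problems indexed by primitive $\q'$. The key observation is that for every primitive $\q'$ the map $\phi_{\q'}:\TT^{nm}\to\TT^m$ sending $\X\mapsto\q'\X\bmod\ZZ^m$ is a measure-preserving epimorphism---after completing $\q'$ to a $\ZZ$-basis of $\ZZ^n$ by an $\mathrm{SL}_n(\ZZ)$-change of variable, $\phi_{\q'}$ becomes projection onto one row. Writing $F(d):=d^{-1}P(d)\bmod 1\subset\TT^m$, condition~\eqref{eq:one} at $\q=d\q'$ translates to $\phi_{\q'}(\X)\in A^*(d,\q'):=F(d)+B(\bzero,\psi(d\q')/d)$, so that $\Leb(A(d\q'))=\Leb_m(A^*(d,\q'))$, where $A(\q):=\set{\X\in\TT^{nm}:\eqref{eq:one}\textrm{ holds}}$.

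Relative well-spreadedness (respectively, relative uniform discreteness together with boundedness of $\psi$) allows one to restrict to a positive proportion of centers in $F(d)$ around which the balls $B(\cdot,\psi(d\q')/d)$ are pairwise disjoint; this gives $\Leb(A(d\q'))\asymp\#P(d)\psi(d\q')^m/d^m$, and analogously $\Leb_m(\tilde A(d))\asymp\#P(d)\Psi(d)^m/d^m$ for the $1$-by-$m$ companion $\tilde A(d):=\set{y\in\TT^m:dy\in B(\bzero,\Psi(d))+P(d)}$. Summing over primitive $\q'$ and using $\Psi(d)^m=\sum_{\q'}\psi(d\q')^m$ yields $\sum_{\q'}\Leb(A(d\q'))\asymp\Leb_m(\tilde A(d))$. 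The (convergence) Borel--Cantelli lemma applied to $W_{1,m}^P(\Psi)$ then forces $\sum_d\Leb_m(\tilde A(d))=\infty$, hence $\sum_\q\Leb(A(\q))=\infty$.

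I would then close via a Chung--Erd\H{o}s divergence Borel--Cantelli argument applied to $\set{A(\q):\q\in\ZZ^n}$, which reduces to a pairwise quasi-independence estimate $\Leb(A(\q_1)\cap A(\q_2))\leq C\,\Leb(A(\q_1))\Leb(A(\q_2))$ on a dominant collection of pairs. When the primitive parts $\q'_1,\q'_2$ are $\mathbb{Q}$-linearly independent, the joint slice map $(\phi_{\q'_1},\phi_{\q'_2}):\TT^{nm}\to\TT^{2m}$ is itself a measure-preserving epimorphism onto a full-dimensional subtorus of bounded degree, and the factorization is essentially an equality. The \emph{parallel} pairs $\q_i=d_i\q'$ (common primitive $\q'$) reduce to intersections $A^*(d_1,\q')\cap A^*(d_2,\q')\subset\TT^m$; their total contribution $\sum_{\q',d_1,d_2}\Leb_m(A^*(d_1,\q')\cap A^*(d_2,\q'))$ I would bound---via the disjoint-ball decomposition---by $\sum_{d_1,d_2}\Leb_m(\tilde A(d_1)\cap\tilde A(d_2))$, and the latter is itself bounded by $\Leb(W_{1,m}^P(\Psi))^{-1}\bigl(\sum_d\Leb_m(\tilde A(d))\bigr)^2$ along a suitable subsequence---that is, Chung--Erd\H{o}s run backwards from the positivity hypothesis. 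The resulting quasi-independence gives $\Leb\bigl(\bigcup_{|\q|\geq N}A(\q)\bigr)\gtrsim 1$ uniformly in $N$, and continuity of measure yields $\Leb(W_{n,m}^P(\psi))>0$.

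The main obstacle is the parallel case: Fubini does not separate two conditions sharing the same $\phi_{\q'}$-slice, and one must instead exploit the nested structure of the balls $B(\bzero,\psi(d\q')/d)\subset B(\bzero,\Psi(d)/d)$ around $F(d)$. This is precisely what the $P$-regularity in~\th\ref{def:well} secures: it allows the parallel contribution to be compared with the $1$-by-$m$ correlation sum, which is then handled by the very hypothesis being bootstrapped. Parts (a) and (b) differ only in how $\psi(d\q')/d$ is kept within the spread scale of $F(d)$: part (a) exploits the $d$-scaling of the spread, part (b) uses boundedness of $\psi$ against the uniform-discreteness constant.
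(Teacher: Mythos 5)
Your architecture matches the paper's: split the pairwise correlation sum for the sets $A(\q)$ into non-parallel pairs (handled by exact independence of the slice maps $\phi_{\q'_1},\phi_{\q'_2}$ for linearly independent primitive vectors) and parallel pairs (compared, via the nesting $B(\bzero,\psi(d\q')/d)\subset B(\bzero,\Psi(d)/d)$ and a contraction estimate, to the $1$-by-$m$ correlation sum $\sum_{d_1,d_2}\Leb(\tilde A(d_1)\cap\tilde A(d_2))$), then close with a divergence Borel--Cantelli argument. The measure computations, the role of (relative) well-spreadedness versus uniform discreteness plus boundedness, and the final passage to positive measure are all sound and are essentially the paper's \S 7--8.

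The genuine gap is the step where you control the $1$-by-$m$ correlation sum: you assert that $\sum_{d_1,d_2\leq D}\Leb(\tilde A(d_1)\cap\tilde A(d_2))\leq \Leb(W_{1,m}^P(\Psi))^{-1}\bigl(\sum_{d\leq D}\Leb(\tilde A(d))\bigr)^2$ along a subsequence because this is ``Chung--Erd\H{o}s run backwards from the positivity hypothesis.'' Chung--Erd\H{o}s does not run backwards. Positivity of the limsup measure gives a lower bound on $\Leb\bigl(\bigcup_d\tilde A(d)\bigr)$, but places no upper bound on the correlation sum of the \emph{given} sequence: interleave a quasi-independent sequence with $N$ repetitions of a single fixed set of measure $\eps$ and the limsup measure stays positive while the correlation ratio blows up like $1/\eps$. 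What is true is a recent and nontrivial theorem of Beresnevich--Velani (the paper's \th\ref{thm:BV}): a sequence of \emph{balls} with positive-measure limsup contains a \emph{subsequence} that is quasi-independent on average, with an explicit constant $K\Leb(\limsup B_i)^{-2}$. Invoking this is not a formality: the subsequence lives at the level of individual balls, not at the level of the sets $\tilde A(d)$, so one must convert it into a refinement $R(d)\subset P(d)$ and verify that the refined sets $A_{1,m}^R(d,\Psi(d))$ still have divergent measure sum and satisfy the quasi-independence inequality with the summation reindexed by $d$ rather than by ball (the paper's \th\ref{lem:BVtranslation}); one must then rerun your parallel-pair comparison against the \emph{refined} sets, which is why the paper proves the dilation lemma (\th\ref{lem:dilation}) for arbitrary disjoint unions rather than for the full $\tilde A(d)$. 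Your proof also needs the preliminary scaling reductions (Cassels' lemma, plus the claims eliminating the regimes where $\psi(\q)$ or $\Psi(d)$ is too large) to ensure the balls are disjoint at the $\Psi$-scale before any of this machinery applies; you gesture at this but it is load-bearing for both the measure asymptotics and the contraction estimate.
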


\begin{remark*}
  The requirement that the approximating function $\psi$ be supported
  in the positive orthant of $\ZZ^n$ is a technical convenience. In
  the applications presented in~\S\ref{sec:applications}, one can
  always assume without loss of generality that a divergence condition
  is met in that orthant.
\end{remark*}

\begin{remark*}
  It is possible that $\Psi(d) = \infty$ for some $d\geq 1$. We will
  see that in such cases one automatically has
  $\Leb(W_{n,m}^P(\psi))=1$, so there is nothing to prove
  (\th\ref{prop:Psifinite}). In the meantime, let it be understood
  that a ball of infinite radius contains all of $\RR^m$. Notice,
  also, that if $n=1$, then $\psi(q) = \Psi(q)$ for all $q\geq 1$, and
  the theorem is trivial.
\end{remark*}

One might suspect that if $W_{1,m}^P(\Psi)$ has full measure rather
than just positive measure, then it is possible to conclude that
$W_{n,m}^P(\psi)$ also has full measure. However,
\th~\ref{ex:fulltopos} in~\S\ref{sec:examples} shows that this is not
true in general.

The next result achieves full measure for $W_{n,m}^P(\psi)$ from
positive measure of $W_{1,m}^P(\Psi_Q)$ for all $Q\geq 1$.

\begin{theorem}\th\label{thm:bootstrap11}
  Fix $m,n \in\NN$ and assume $\lim_{d\to\infty}\#P(d) = \infty$.
  \begin{enumerate}[label = \alph*)]
  \item If $P$ is relatively well-spread, then for functions
    $\psi:\Zpos^n\to\Rpos$,
    \begin{equation*}
      \inf_{Q}\Leb(W_{1,m}^P(\Psi_Q)) > 0 \qquad\implies\qquad \Leb(W_{n,m}^P(\psi)) =1.
    \end{equation*}
    
  \item If $P$ is relatively uniformly discrete, then for bounded
    functions $\psi:\Zpos^n\to\Rpos$,
    \begin{equation*}
      \inf_{Q}\Leb(W_{1,m}^P(\Psi_Q)) > 0 \qquad\implies\qquad \Leb(W_{n,m}^P(\psi)) =1.
    \end{equation*}
  \end{enumerate}
\end{theorem}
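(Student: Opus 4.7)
Plan: bootstrap \th\ref{thm:bootstrap} along a family of truncated approximation functions, and then upgrade the resulting uniform positive measure to full measure via a zero-one law.

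For each $Q \geq 1$, define $\psi_Q : \Zpos^n \to \Rpos$ by $\psi_Q(\q) := \psi(\q) \mathbf{1}[\abs{\q/\gcd(\q)} \geq Q]$, and note that the function $\Psi$ attached to $\psi_Q$ via~(\ref{eq:Psidef}) coincides with $\Psi_Q$ attached to $\psi$. Hence \th\ref{thm:bootstrap} applied to $\psi_Q$ gives $\Leb(W_{n,m}^P(\psi_Q)) > 0$ for every $Q$. Assuming the proof of \th\ref{thm:bootstrap} furnishes a quantitative implication of the form $\Leb(W_{n,m}^P(\psi_Q)) \geq \kappa \cdot \Leb(W_{1,m}^P(\Psi_Q))$ with $\kappa = \kappa(n,m,P) > 0$ depending only on the ambient data, the uniform-in-$Q$ hypothesis yields a uniform lower bound $\Leb(W_{n,m}^P(\psi_Q)) \geq \kappa c > 0$, where $c := \inf_Q \Leb(W_{1,m}^P(\Psi_Q))$. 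Pass to the tail $W^* := \bigcap_{Q \geq 1} W_{n,m}^P(\psi_Q) \subset W_{n,m}^P(\psi)$; by continuity of measure on this decreasing intersection, $\Leb(W^*) \geq \kappa c > 0$, and the task reduces to showing $\Leb(W^*) = 1$.

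For the upgrade, my plan is a zero-one law: I would try to show that $W^*$ is invariant modulo null sets under a dense ergodic action on $\TT^{nm}$. The most natural candidate is translation by $(\mathbb{Q}/\ZZ)^{nm}$. Given $\bR \in \mathbb{Q}^{nm}$ with common denominator $L$, the invariance of $W^*$ under $\X \mapsto \X + \bR$ ought to follow by restricting to those $\q$ whose primitive part lies in a suitable sublattice modulo $L$, along which the perturbation $\q\bR$ is absorbed by the $\gcd(\q)\ZZ^m$-periodicity of $P(\q)$ and hence does not affect~(\ref{eq:one}). To enforce this restriction one would re-apply \th\ref{thm:bootstrap} to a further truncation $\psi_Q^{(L)}$ cut off to those primitive parts, and argue that the associated $1$-by-$m$ limsup sets still satisfy $\inf_Q \Leb(W_{1,m}^P(\Psi_Q^{(L)})) > 0$.

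The main obstacle is securing this final uniform positivity after the additional restriction, since dropping terms from the sum in~(\ref{eq:Psidef}) could in principle collapse the limsup set's measure. This is precisely where the hypothesis $\lim_{d\to\infty}\#P(d) = \infty$ should be indispensable, ensuring that the targets $P(d)$ grow densely enough in $(\RR/d\ZZ)^m$ that the restricted limsup set still captures a positive proportion of the ambient measure. Once this sub-lemma is in hand, the $(\mathbb{Q}/\ZZ)^{nm}$-invariance of $W^*$ modulo null sets follows, ergodicity of the rational translation action on $\TT^{nm}$ forces $\Leb(W^*) \in \{0, 1\}$, and the previously-established positivity yields $\Leb(W^*) = 1$, hence $\Leb(W_{n,m}^P(\psi)) = 1$.
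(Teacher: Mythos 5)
The first half of your plan is consistent with the paper: the truncations $\psi_Q$, the identification of their associated auxiliary function with $\Psi_Q$, and the fact that the proof of \th\ref{thm:bootstrap} is quantitative enough (the limsup set has measure at least $1/C'$ with $C' = 1 + 2K\Leb(W_{1,m}^P(\Psi_Q))^{-2}$, see~(\ref{eq:btw2})) to give a lower bound on $\Leb(W_{n,m}^P(\psi_Q))$ that is uniform in $Q$ are all correct. The gap is in the upgrade from positive to full measure. A zero--one law via translation by $(\mathbb{Q}/\ZZ)^{nm}$ is not available at this level of generality. Concretely: for the perturbation by $\mathbf{R}=\mathbf{S}/L$ to be absorbed by the $\gcd(\q)\ZZ^m$-periodicity of $P(\q)$ you need $\q\mathbf{R}\in\gcd(\q)\ZZ^m$, i.e.\ $\q'\mathbf{S}\equiv\bzero \pmod L$ for the primitive part $\q'=\q/\gcd(\q)$; for many rational $\mathbf{R}$ (e.g.\ $\mathbf{R}=\tfrac12 I$ when $n=m$, which forces all entries of $\q'$ to be even) \emph{no} primitive $\q'$ satisfies this, so the proposed sublattice restriction is empty and invariance cannot even be formulated for a dense set of translations. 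More fundamentally, the theorem is aimed precisely at settings with no zero--one law: the targets $P(d)$ are only assumed relatively well-spread, not translation-invariant in $(\RR/d\ZZ)^m$ (in \th\ref{ex:postofull} they occupy the corner $[0,d/2]^m$, and \th\ref{ex:fulltopos} exhibits a well-spread $P$ with $0<\Leb(W_{n,m}^P(\psi))<1$). Quasi-invariance of $W^*$ under rational translations is therefore not provable in general; it holds only a posteriori once the conclusion is known.

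The paper's mechanism is different and is what you should substitute for the ergodicity step: fix an arbitrary open $U\subset\TT^{nm}$ and prove $\Leb\parens*{W_{n,m}^P(\psi)\cap U}\geq \Leb(U)^2/(9C')$ with $C'$ independent of $U$, then conclude by the Beresnevich--Dickinson--Velani density lemma (\th\ref{lem:lebesguedensity}). The key input is \th\ref{lem:regularity}: once $\abs{\q/\gcd(\q)}$ exceeds a threshold $Q_0(U)$, each $A_{n,m}^P(\q,\psi(\q))$ satisfies $\Leb(A\cap U)\geq\tfrac13\Leb(A)\Leb(U)$, and this factor propagates through the quasi-independence computation of \S\ref{sec:1a}. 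This is the true purpose of truncating to $\abs{\q/\gcd(\q)}\geq Q$, and the hypothesis $\inf_Q\Leb(W_{1,m}^P(\Psi_Q))>0$ is exactly what keeps the constant $C'$ uniform as $Q\to\infty$. Finally, your guess about the role of $\#P(d)\to\infty$ is off: it is used to force $\Psi_Q(d)/d\to 0$ via~(\ref{eq:50}) (and, in part (b), to push the Chung--Erd\H{o}s bound in \th\ref{claim:d} to $1$), not to guarantee density of the targets after a further arithmetic restriction.
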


The theorem is vacuously true when $n=1$. This is because for all
integers $q \geq 1$, one has $\abs{q/\gcd(q)} = 1$, and so if $n=1$
then $\Psi_Q$ is defined by an empty sum as soon as
$Q>1$. Consequently, if $Q>1$, then $W_{1,m}^P(\Psi_Q)$ has zero
measure. To put it another way, \th\ref{thm:bootstrap11} is only
meaningful if one increases dimension from $n=1$ to $n>1$. In fact,
the condition on $\Psi_Q$ ensures that the set $W_{n,m}^P(\psi)$ is
genuinely higher-dimensional. Without it, one can reproduce a
$1$-by-$m$-dimensional set in $\TT^{nm}$ by choosing $\psi$ to be
supported on a line in $\q\in\Zpos^n$ (see \th\ref{ex:postopos}).

The applications in~\S\ref{sec:applications} all begin with some
$1$-by-$m$-dimensional knowledge---a \emph{simultaneous theorem}. For
the applications that require \th\ref{thm:bootstrap11}, the
simultaneous theorem already takes care of the cases where the
condition on $\Psi_Q$ does not hold. In other words, the
$1$-by-$m$-dimensional artifice described in the previous paragraph
represents a problem that has already been solved in the $1$-by-$m$
setting.

\subsection{About the proofs}
\label{sec:aboutproofs}

\paragraph{Positive measure.}

The problem of showing that a limsup set $W= \limsup_{q\to\infty}A_q$
has positive measure in a probability space $(X,\mu)$ always revolves
around establishing some form of stochastic independence among the
sets $A_q$. For the sets that one typically encounters in metric
number theory, the best one can hope for is a weak form of
independence called quasi-independence on average. It is defined by
\begin{equation*}
  \limsup_{D\to\infty}\frac{\parens*{\sum_{k \leq D} \mu(A_k)}^2}{\sum_{k ,\ell \leq D} \mu(A_k\cap A_\ell)}>0
\end{equation*}
and it is enough to guarantee positive measure of
$W$. (\th~\ref{prop:erdoschung,prop:erdosrenyi,prop:qia} all express
this idea in various forms.) In fact, a partial converse is true:
Beresnevich and Velani~\cite{BVBC} have recently shown that if a
sequence of balls has a positive-measure limsup set, then there is a
subsequence of those balls exhibiting quasi-independence on
average. (See \th~\ref{thm:BV}.) One of the main steps in the proofs
of \th\ref{thm:bootstrap,thm:bootstrap11} hinges on an application of
this result.

The sets of interest in \th~\ref{thm:bootstrap,thm:bootstrap11} are 
\begin{equation*}
W_{n,m}^P(\psi) = \limsup_{\abs{\q}\to\infty} A_{n,m}^P(\q, \psi(\q)),
\end{equation*}
where $A_{n,m}^P(\q,\psi(\q))$ denotes the set of points in $\TT^{nm}$
for which~\eqref{eq:one} holds. We work under the assumption that
$\Leb(W_{1,m}^P(\Psi))>0$, and the goal is to show that the sets
$A_{n,m}^P(\q,\psi(\q))\,(\q\in\Zpos^n)$ are quasi-independent on
average. We note that the sets $A_{1,m}^P(d,\Psi(d))$ are unions of
balls, so $W_{1,m}^P(\Psi)$ can be viewed as the limsup set of a
sequence of balls. Thus, the above mentioned Beresnevich--Velani
result furnishes a quasi-independent subsequence of those balls, which
in turn corresponds to a refinement $R\subset P$ such that the sets
$A_{1,m}^R(d,\Psi(d))\, (d\geq 1)$ are quasi-independent on
average. (See \th~\ref{lem:BVtranslation}.) By modifying a strategy
in~\cite{ramirez2023duffinschaeffer}, we are able to leverage the
quasi-independence of the sets $A_{1,m}^R(d,\Psi(d))$ in the
calculation of average pairwise overlaps
\begin{equation*}
  \Leb\parens*{A_{n,m}^R(\q,\psi(\q))\cap A_{n,m}^R(\br,\psi(\br))}
\end{equation*}
in order to achieve quasi-independence on average of
$A_{n,m}^R(\q,\psi(\q))\,(\q\in\Zpos^n)$. This proves positive measure
as needed in \th~\ref{thm:bootstrap}.

\paragraph{Full measure.}

Once positive measure of $W_{n,m}^P(\psi)$ is secured, the idea is to
fix an arbitrary small open set $U\subset \TT^{nm}$, and show that
\begin{equation}\label{eq:58}
  \Leb\parens*{W_{n,m}^P(\psi)\cap U} \geq c \Leb (U)^2,
\end{equation}
where $c>0$ is a constant that does not depend on $U$. Then a variant
of the Lebesgue density theorem (Beresnevich--Dickinson--Velani,
\th~\ref{lem:lebesguedensity}) shows that $W_{n,m}^P(\psi)$ must have
full measure.

In order to accomplish that, we examine the sets
$A_{n,m}^P(\q,\psi(\q))$ on small scales. In \th~\ref{lem:regularity}
we show that if $\abs{\q/\gcd(\q)}$ is sufficiently large---say,
larger than some $Q\geq 1$ depending on $U$---then
\begin{equation}\label{eq:59}
  \Leb\parens*{A_{n,m}^P(\q,\psi(\q))\cap U} \geq \frac{1}{3}\Leb(A_{n,m}^P(\q,\psi(\q)))\Leb(U).
\end{equation}
We then run the argument from the proof of \th\ref{thm:bootstrap}, but
with $\Psi_Q$ instead of $\Psi_1$. Propagating~\eqref{eq:59} through
the resulting independence calculations for
$A_{n,m}^R(\q,\psi(\q))\,(\q\in\Zpos^n)$ gives~\eqref{eq:58} with a
constant $c>0$ that depends on $Q$, and hence on $U$. In fact, it
depends on $\Leb(W_{n,m}^P(\Psi_Q))$ in an explicit way, as can be seen
by keeping track of the implicit constant in the quasi-independence
from the Beresnevich--Velani result. Now the assumption that
$\inf_Q \Leb(W_{n,m}^P(\Psi_Q))>0$ allows the whole argument to work
with a constant that is uniform over $Q$, and this leads to the
desired $c>0$---independent of $U$---for which~\eqref{eq:58} holds,
proving \th~\ref{thm:bootstrap11}.

\section{Applications}
\label{sec:applications}

The general outline for the applications of
\th\ref{thm:bootstrap,thm:bootstrap11} is this: Given a function
$\psi$ satisfying a divergence condition in the $n$-by-$m$-setting,
define the associated functions $\Psi_Q$. If there exists $d, Q\geq 1$
for which $\Psi_Q(d)=\infty$, then the following proposition gives the
desired result.

\begin{proposition}\th\label{prop:Psifinite}
  Fix $m,n \in\NN$. For each $d\geq 1$, let
  $P(d) \subset (\RR/d\ZZ)^m$ be a finite set, and for each
  $\q\in\ZZ^n$ with $\gcd(\q)=d$, let $P(\q)$ be the lift to $\RR^m$
  of $P(d)$. Let $\psi: \Zpos^n \to \Rpos$. If there exist $d,Q\geq 1$
  for which $\Psi_Q(d) = \infty$, then $\Leb(W_{n,m}^P(\psi))=1$.
\end{proposition}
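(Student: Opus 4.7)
The plan is to exhibit $W_{n,m}^P(\psi)$ as containing the limsup of a family of pairwise independent events whose measures sum to infinity, and then invoke a divergent Borel--Cantelli lemma for pairwise independent events (as in \th\ref{prop:erdoschung,prop:erdosrenyi,prop:qia}). I expect this approach to require no hypothesis on $P$ beyond $P(d)\neq\emptyset$; in particular, neither relative uniform discreteness nor relative well-spreadedness should be needed.

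Write $A(\q):=A_{n,m}^P(\q,\psi(\q))$. First I would reparametrize the divergent sum $\Psi_Q(d)=\infty$ via $\q=d\q'$ with $\q'\in\Zpos^n$ primitive and $\abs{\q'}\geq Q$, obtaining $\sum_{\q'}\psi(d\q')^m=\infty$. The defining condition for $A(d\q')$ reduces, after dividing through by $d$, to $\q'\X\in E_{\q'}:=B(\bzero,\psi(d\q')/d)+P(d)/d\pmod{\ZZ^m}$, so $A(d\q')=f_{\q'}^{-1}(E_{\q'})$, where $f_{\q'}(\X):=\q'\X\pmod{\ZZ^m}$. Since $\gcd(\q')=1$, the map $f_{\q'}:\TT^{nm}\to\TT^m$ is a surjective continuous group homomorphism, hence measure-preserving, so $\Leb(A(d\q'))=\Leb(E_{\q'})$. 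A crude lower bound $\Leb(E_{\q'})\geq\min(1,(2\psi(d\q')/d)^m)$ then forces $\sum_{\q'}\Leb(A(d\q'))=\infty$.

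The heart of the argument is pairwise independence of $\set*{A(d\q')}_{\q'}$. For two distinct primitives $\q_1',\q_2'\in\Zpos^n$, positivity and primitivity force non-proportionality; thus the $2\times n$ integer matrix with rows $\q_1',\q_2'$ has $\RR$-rank $2$, and the induced map $\X\mapsto(\q_1'\X,\q_2'\X)$ from $\TT^{nm}$ to $\TT^{2m}$ is (acting column by column) surjective and measure-preserving, giving
\begin{equation*}
\Leb\parens*{A(d\q_1')\cap A(d\q_2')}=\Leb(A(d\q_1'))\Leb(A(d\q_2')).
\end{equation*}
The main (and essentially only) obstacle is verifying this surjectivity and measure-preservation cleanly---this should reduce to the classical fact that an integer $2\times n$ matrix of full row rank descends to a surjective group homomorphism $\TT^n\to\TT^2$ which pushes Haar forward to Haar. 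With this in hand, the divergent pairwise-independent Borel--Cantelli lemma produces $\Leb(\limsup_{\q'}A(d\q'))=1$, and since this limsup lies in $W_{n,m}^P(\psi)$, the result follows. The case $n=1$ is vacuous: then $\abs{\q/\gcd(\q)}=1$, so $\Psi_Q(d)=0$ for $Q>1$ and $\Psi_1(d)=\psi(d)<\infty$.
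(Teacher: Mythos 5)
Your argument is correct and shares the paper's skeleton: reduce to the family $\set{A_{n,m}^P(d\q',\psi(d\q'))}$ indexed by primitive $\q'\in\Zpos^n$, establish pairwise independence from the pairwise linear independence of distinct primitives, check that the measure sum diverges, and apply the second Borel--Cantelli lemma for pairwise independent events. The genuine difference is in how independence is obtained. The paper invokes \th\ref{lem:indA}, whose proof decomposes each $A_{n,m}^P(\q,r)$ as a \emph{disjoint} union of product sets $E_{n,m}(\q,r,\bv)$; that disjointness hypothesis is why the paper must split into two cases according to whether $\psi(\q)$ exceeds half the minimal gap $b$ of $P(d)$ (shrinking the radius to $b/2$ when it does, and using \th\ref{lem:measures} for exact measures when it does not). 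You instead treat $E_{\q'}\subset\TT^m$ as an arbitrary measurable target and observe that the joint map $\X\mapsto(\q_1'\X,\q_2'\X)$ pushes Haar measure on $\TT^{nm}$ forward to Haar measure on $\TT^{2m}$ whenever $\q_1',\q_2'$ are linearly independent over $\mathbb{Q}$ --- a correct and standard fact (a surjective continuous homomorphism of compact groups pushes Haar to Haar, and surjectivity follows from injectivity of the dual map $\ZZ^2\to\ZZ^n$). This makes the crude bound $\Leb(E_{\q'})\geq\min\set{1,(2\psi(d\q')/d)^m}$ sufficient and eliminates the case analysis entirely; what it costs you is nothing beyond the (correctly flagged) standing assumption $P(d)\neq\emptyset$ for the relevant $d$, which the paper's proof uses equally implicitly.
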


\begin{remark*}
  The proof is in \S\ref{sec:reductions}. The result can also be
  deduced from~\cite[Chapter~1, Theorem~14]{Sprindzuk}.
\end{remark*}
  
If, on the other hand, $\Psi_Q(d)$ is always finite, then we show that
$\Psi_1$ (or $\Psi_Q$) satisfies the divergence condition from a
\emph{simultaneous theorem} in the $1$-by-$m$-setting. The
simultaneous theorem then gives $\Leb(W_{1,m}^P(\Psi_1))>0$ (or
$\inf_Q\Leb(W_{1,m}^P(\Psi_Q))>0$), and \th\ref{thm:bootstrap} (or
\th\ref{thm:bootstrap11}) bootstraps this to $\Leb(W_{n,m}^P(\psi))>0$
(or $\Leb(W_{n,m}^P(\psi))=1$).

Some of these applications are short proofs of existing results, some
are new results, and some are hypothetical in the sense that they rely
on hypothetical $1$-by-$m$-dimensional theorems. The new theorems are
numbered.

\subsection{Khintchine 1924, 1926 $\implies$ Khintchine--Groshev 1938}
\label{sec:khintch-1924-impl}

Khintchine's theorem was proved in 1924 in dimension $1$ and in 1926
for higher dimensions~\cite{Khintchine,Khintchineonedimensional}. In
1938, Groshev proved what is now known as the Khintchine--Groshev
theorem~\cite{Groshev}---the analogue of Khintchine's theorem for
$n$-by-$m$ systems of linear forms. 

\begin{theorem*}[Khintchine--Groshev theorem]
  Fix $m,n\in\NN$. With $P(\q) = \ZZ^m$ for every $\q\in\ZZ^n$, and
  $\psi(\q):=\psi(\abs{\q})$ depending only on $\abs{\q}$,
  \begin{equation*}
    \Leb(W_{n,m}^P(\psi)) =
    \begin{cases}
      0 &\textrm{if } \sum_{q=1}^\infty q^{n-1}\psi(q)^m  < \infty \\
      1 &\textrm{if } \sum_{q=1}^\infty q^{n-1}\psi(q)^m  = \infty \textrm{ and } \psi(q) \textrm{ is nonincreasing.}
    \end{cases}
  \end{equation*}
\end{theorem*}

One can use \th\ref{thm:bootstrap} and a zero-one law due to
Beresnevich and Velani~\cite[Theorem~1]{BVzeroone} to show that
Khintchine's theorem implies the Khintchine--Groshev theorem. In fact,
it implies the following stronger statement.

\begin{theorem}\th\label{thm:khintPsi}
  Fix $m,n\in\NN$. Let $P(\q) = \ZZ^m$ for every $\q\in\ZZ^n$, and
  $\psi:\Zpos^n\to\Rpos$.  Then
  \begin{equation*}
    \Leb(W_{n,m}^P(\psi)) =
    \begin{cases}
      0 &\textrm{if } \sum_{\q\in\Zpos^n} \psi(\q)^m  < \infty \\
      1 &\textrm{if } \sum_{\q\in\Zpos^n} \psi(\q)^m = \infty \textrm{ and } \Psi \textrm{ is nonincreasing,}
    \end{cases}
  \end{equation*}
  where $\Psi:=\Psi_1$ is defined by~\eqref{eq:Psidef}.
\end{theorem}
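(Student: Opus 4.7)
The plan is to split the theorem into convergence and divergence, with the latter being the substantive direction. For convergence, observe that for each nonzero $\q \in \Zpos^n$ the map $\X \mapsto \q\X$ descends to a surjective continuous homomorphism $\TT^{nm} \to \TT^m$ between compact abelian groups, under which Haar measure pushes forward to Haar measure. Hence the set $A(\q) = \set{\X \in \TT^{nm} : \q\X \in B(\bzero, \psi(\q)) + \ZZ^m}$ has measure at most $(2\psi(\q))^m$, and the Borel--Cantelli lemma yields $\Leb(W_{n,m}^P(\psi)) = 0$ whenever $\sum_\q \psi(\q)^m < \infty$.

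For divergence, I would first check that $P(\q) = \ZZ^m$ fits into \th\ref{def:well}: here $P(d) = (\ZZ/d\ZZ)^m$ has $\#P(d) = d^m$, and any two distinct representatives differ by at least $1$ in max norm, so well-spreadedness holds with $a = b = c = 1$. Next, dispatch the edge case: if $\Psi_Q(d) = \infty$ for some $d, Q \geq 1$, then \th\ref{prop:Psifinite} immediately gives $\Leb(W_{n,m}^P(\psi)) = 1$ and the proof ends.

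Otherwise $\Psi$ is finite-valued, and swapping the order of summation yields
\begin{equation*}
\sum_{d=1}^\infty \Psi(d)^m = \sum_{d=1}^\infty \sum_{\substack{\q \in \Zpos^n \\ \gcd(\q) = d}} \psi(\q)^m = \sum_{\q \in \Zpos^n} \psi(\q)^m = \infty.
\end{equation*}
The set $W_{1,m}^P(\Psi)$ is the classical $m$-dimensional Khintchine set of $\x \in \TT^m$ with $d\x \in B(\bzero, \Psi(d)) + \ZZ^m$ for infinitely many $d$, and since $\Psi$ is nonincreasing, Khintchine's 1926 theorem~\cite{Khintchine} gives $\Leb(W_{1,m}^P(\Psi)) = 1 > 0$. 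Applying \th\ref{thm:bootstrap}~a) bootstraps this to $\Leb(W_{n,m}^P(\psi)) > 0$. To upgrade from positive to full measure, invoke the Beresnevich--Velani zero-one law~\cite{BVzeroone}, which forces $\Leb(W_{n,m}^P(\psi)) = 1$.

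The principal obstacle, such as there is one, is ensuring the hypotheses of $1$-dimensional Khintchine are available whenever we enter the bootstrap: specifically, that the monotonicity of $\Psi$ covers the $m = 1$ case where Khintchine's theorem genuinely needs it, and that the infinite-value case is not overlooked. Both points are handled above, and \th\ref{thm:bootstrap} together with the zero-one law does essentially all of the remaining work.
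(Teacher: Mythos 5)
Your proposal is correct and follows essentially the same route as the paper: Borel--Cantelli for convergence, \th\ref{prop:Psifinite} for the case of an infinite $\Psi$-value, Khintchine's theorem applied to the nonincreasing $\Psi$ to get $\Leb(W_{1,m}^P(\Psi))=1$, \th\ref{thm:bootstrap}(a) to bootstrap to positive measure, and the Beresnevich--Velani zero-one law to upgrade to full measure. The only cosmetic difference is that you verify the well-spreadedness of $P$ and the measure-preservation of $\X\mapsto\q\X$ slightly more explicitly than the paper does.
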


\begin{proof}
  For a fixed $\q\in \Zpos^n$, the set of $\X\in \TT^{nm}$ for
  which~\eqref{eq:one} holds has measure bounded from above by
  $(2\psi(\q))^m$. Therefore, the Borel--Cantelli lemma immediately
  proves the convergence part of this theorem.

  Suppose then that $\sum_{\q\in\Zpos^n}\psi(\q)^m$ diverges. If there
  is some $d\geq 1$ such that $\Psi(d)=\infty$, then the result
  follows from \th~\ref{prop:Psifinite}. Otherwise, note that
  \begin{equation*}
    \sum_{d\geq 1} \Psi(d)^m =  \sum_{\q\in\Zpos^n}\psi(\q)^m = \infty. 
  \end{equation*}
  Since it is assumed that $\Psi(d)$ is nonincreasing, Khintchine's
  theorem gives $\Leb(W_{1,m}^P(\Psi)) = 1$. Clearly,
  $P$ is well-spread in the sense of \th~\ref{def:well}, therefore
  \th\ref{thm:bootstrap} applies. It gives that
  $\Leb(W_{n,m}^P(\psi))>0$. Full measure follows from the zero-one
  law~\cite[Theorem~1]{BVzeroone}.
\end{proof}

\begin{proof}[Proof of the Khintchine--Groshev theorem]
  Let $\psi:\NN\to\Rpos$, and define $\bar\psi:\Zpos^n\to\Rpos$ by
  $\bar\psi(\q)=\psi(\abs{\q})$. We will show that
  \th~\ref{thm:khintPsi} applies to the function $\bar\psi$.
    
  Since
  \begin{equation}\label{eq:56}
    \sum_{q=1}^\infty q^{n-1}\psi(q)^m \asymp \sum_{q=1}^\infty\sum_{\abs{\q}=q} \psi(q)^m  \asymp \sum_{\q\in\Zpos^n}\bar\psi(\q)^m,
  \end{equation}
  the convergence case follows from the convergence case of
  \th~\ref{thm:khintPsi}.

  For the divergence case, notice that~\eqref{eq:56} implies
  $\sum_{\q\in\Zpos^n}\bar\psi(\q)^m$ diverges. Therefore, in order to
  apply \th~\ref{thm:khintPsi}, we only need to verify that $\Psi$ is
  nonincreasing. To see this, observe that for each $d\geq 1$,
  \begin{equation}\label{eq:57}
    \Psi(d) = \parens*{\sum_{\gcd(\q)=1}\bar\psi(d\q)^m}^{1/m}.
  \end{equation}
  Recall that in the divergence case it is assumed that $\psi(q)$ is
  nonincreasing. Therefore, for every $\q$, one has that
  $\bar\psi(d\q):= \psi(d\abs{\q})$ is a nonincreasing function of
  $d\geq 1$. Reviewing~\eqref{eq:57}, one sees that therefore $\Psi$
  is nonincreasing.
\end{proof}

The next application shows how \th\ref{thm:bootstrap11} can be used in
cases where there is no pre-existing zero-one law.

\subsection{Sz{\"u}sz 1958 and Schmidt 1964 $\implies$ Sprindzuk 1979}
\label{sec:inhom-khintch-impl}

The inhomogeneous version of Khintchine's theorem was proved in 1958
by Sz{\"u}sz in dimension $m=1$, and in 1964 by Schmidt in higher
dimensions~\cite{SzuszinhomKT,Schmidt}. Combining these with
\th\ref{thm:bootstrap11} gives the following inhomogeneous version of
Khintchine--Groshev, which can be found in
Sprindzuk~\cite[1979]{Sprindzuk}.

\begin{theorem*}[Inhomogeneous Khintchine--Groshev theorem]
  Fix $m,n\in\NN$. With $P(\q) = \y + \ZZ^m$ for every $\q\in\ZZ^n$,
  where $\y\in\RR^m$ a constant, and $\psi(\q):=\psi(\abs{\q})$
  depending only on $\abs{\q}$,
  \begin{equation*}
    \Leb(W_{n,m}^P(\psi)) =
    \begin{cases}
      0 &\textrm{if } \sum_{q=1}^\infty q^{n-1}\psi(q)^m  < \infty \\
      1 &\textrm{if } \sum_{q=1}^\infty q^{n-1}\psi(q)^m  = \infty \textrm{ and } q^{n-1}\psi(q)^m \textrm{ is nonincreasing.}\\
    \end{cases}
  \end{equation*}
\end{theorem*}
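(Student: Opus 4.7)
The strategy parallels the derivation of the Khintchine--Groshev theorem from Khintchine's theorem in \S\ref{sec:khintch-1924-impl}, except that \th\ref{thm:bootstrap11} plays the role of \th\ref{thm:bootstrap}, since there is no available inhomogeneous zero--one law. The plan is to set $\psi(\q) = 0$ for $\q\notin\Zpos^n$ and then, for every $Q\geq 1$, verify that $\Psi_Q$ is nonincreasing and that $\sum_d \Psi_Q(d)^m$ diverges. Once that is in hand, Sz{\"u}sz's theorem (for $m=1$) and Schmidt's theorem (for $m\geq 2$) will give $\Leb(W_{1,m}^P(\Psi_Q))=1$ for every $Q$, so $\inf_Q\Leb(W_{1,m}^P(\Psi_Q))=1$, and part~(a) of \th\ref{thm:bootstrap11} will yield $\Leb(W_{n,m}^P(\psi))=1$. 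The convergence half is immediate from Borel--Cantelli combined with $\sum_{\q\in\Zpos^n}\psi(\q)^m \asymp \sum_q q^{n-1}\psi(q)^m$, exactly as in the proof of \th\ref{thm:khintPsi}.

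For the divergence half, I would first dispose of the case that $\Psi_Q(d)=\infty$ for some $d,Q$ by appealing to \th\ref{prop:Psifinite}, and then assume every $\Psi_Q$ is finite valued. Monotonicity of $\Psi_Q$ follows exactly as in the proof of Khintchine--Groshev above: the hypothesis that $q^{n-1}\psi(q)^m$ is nonincreasing forces $\psi$ itself to be nonincreasing, and then the analogue of~\eqref{eq:57} for $\Psi_Q$, namely
\begin{equation*}
  \Psi_Q(d)^m = \sum_{\substack{\gcd(\q)=1 \\ \abs{\q}\geq Q}}\psi(d\q)^m,
\end{equation*}
shows that $\Psi_Q$ is nonincreasing in $d$. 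I would also check that $P(\q) = \y + \ZZ^m$ satisfies the hypotheses of \th\ref{thm:bootstrap11}: its reduction modulo $d\ZZ^m$ is a finite set of $d^m$ points whose pairwise distances are all at least $1$, so the conditions of \th\ref{def:well} hold with $a=b=c=1$ (i.e.\ $P$ is well-spread), and $\#P(d) = d^m\to\infty$ meets the extra hypothesis of \th\ref{thm:bootstrap11}.

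The one nontrivial step is verifying that the divergence of $\sum_q q^{n-1}\psi(q)^m$ survives the cutoff $\abs{\q/\gcd(\q)}\geq Q$ used in defining $\Psi_Q$; equivalently, that $\sum_d \Psi_Q(d)^m$ still diverges for $Q>1$. Parametrizing by $q=\abs{\q}$ and $d=\gcd(\q)$, the cutoff excludes only those $\q$ with $d>q/Q$, and the number of such excluded vectors with $\abs{\q}=q$ is bounded by a constant depending only on $Q$ and $n$ (there are at most $Q$ admissible values of $d$, and for each, the primitive vectors of norm $q/d<Q$ in $\Zpos^n$ number $O(Q^n)$), whereas the total count of $\q\in\Zpos^n$ with $\abs{\q}=q$ is of order $q^{n-1}$. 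Hence for all sufficiently large $q$ the surviving count is still of order $q^{n-1}$, so $\sum_d \Psi_Q(d)^m$ is bounded below by a constant multiple of $\sum_{q\text{ large}} q^{n-1}\psi(q)^m = \infty$. This is the main (minor) obstacle in the argument; with it in place, the cited simultaneous theorems of Sz{\"u}sz and Schmidt combined with \th\ref{thm:bootstrap11}(a) complete the proof.
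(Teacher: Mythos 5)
Your proposal is correct and follows essentially the same route as the paper: reduce to $\psi$ supported on $\Zpos^n$, dispose of the $\Psi_Q(d)=\infty$ case via \th\ref{prop:Psifinite}, check well-spreadness of $P$ and monotonicity of $\Psi_Q$ from the formula $\Psi_Q(d)^m=\sum_{\gcd(\q)=1,\,\abs{\q}\geq Q}\psi(d\q)^m$, show the cutoff $\abs{\q/\gcd(\q)}\geq Q$ removes only $O_{Q,n}(1)$ vectors per sphere $\abs{\q}=q$ so that $\sum_d\Psi_Q(d)^m$ still diverges, and then feed Sz{\"u}sz/Schmidt into \th\ref{thm:bootstrap11}(a). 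The paper's version of the counting step is identical in substance (it bounds the excluded vectors by $\#\PP_Q^n$ and takes $q$ large enough that the sphere count is at least twice that), so no further comment is needed.
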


\begin{proof}[Proof modulo the $n = 1$ case]
  Only the divergence case requires attention. The convergence case is
  identical to that of \th~\ref{thm:khintPsi}: it is an application of
  the Borel--Cantelli lemma.

  Let $\psi:\NN \to \Rpos$ be a function such that $q^{n-1}\psi(q)^m$ is
  nonincreasing and such that
  \begin{equation}\label{eq:1}
    \sum_{q=1}^\infty q^{n-1}\psi(q)^m = \infty.
  \end{equation}
  Notice that $\psi$ is a nonincreasing function. Extend $\psi$ to
  $\psi:\Zpos^n\to\Rpos$ such that $\psi(\q):=\psi(\abs{\q})$.

  Note that $P(\q) = \y + \ZZ^m$ is the lift to $\RR^m$ of
  \begin{equation*}
    P(\gcd(\q)) = \y + \parens*{\ZZ/\gcd(\q)\ZZ}^m \subset \parens*{\RR/\gcd(\q)\ZZ}^m 
  \end{equation*}
  for all $\q\in\ZZ^n$. It is easily seen that $P$ is well-spread in
  the sense of Definition~\ref{def:well}.

  For $d,Q\geq 1$ define $\Psi_Q(d)$ by~(\ref{eq:Psidef}). If there
  exists $d,Q\geq 1$ for which $\Psi_Q(d)=\infty$, then
  \th\ref{prop:Psifinite} gives that $\Leb(W_{n,m}^P(\psi))=1$.
  Otherwise, we have as a consequence of~(\ref{eq:1}) that the series
  $\sum_d \Psi(d)^m$ diverges. Notice now that
  \begin{equation*}
    \sum_{d\geq 1}\Psi_Q(d)^m = \sum_{\substack{\q\in\Zpos^n 
    \\ \abs{\q/\gcd(\q)}\geq Q}} \psi(\q)^m \\
                            = \sum_{q=1}^\infty \sum_{\substack{\abs{\q}=q 
    \\ \abs{\q/\gcd(\q)}\geq Q}} \psi(\q)^m.
  \end{equation*}
  Let $\PP_Q^n = \set{\q\in\Zpos^n : \gcd(\q)=1, \abs{\q} < Q}$. It is
  a finite set. Let $M$ be large enough that
  $\#\set{\q\in\Zpos^n : \abs{\q}=q}\geq 2\#\PP_Q^n$ for all
  $q\geq M$. Then, continuing the above calculations,
  \begin{align*}
    \sum_{d\geq 1}\Psi_Q(d)^m &\geq \sum_{q=M}^\infty \sum_{\substack{\abs{\q}=q \\
    \abs{\q/\gcd(\q)}\geq Q}} \psi(\q)^m \\
    &\geq \sum_{q=M}^\infty \psi(q)^m \parens*{\#\set{\q\in\Zpos^n :
    \abs{\q}=q} - \#\PP_Q^n} \\
    &\geq \frac{1}{2}\sum_{q=M}^\infty \psi(q)^m \#\set{\q\in\Zpos^n :
    \abs{\q}=q} \\
    &\asymp \sum_{q=M}^\infty q^{n-1}\psi(q)^m.
  \end{align*}
  Now, by~(\ref{eq:1}) we have that $\sum_d \Psi_Q(d)^m$ diverges. The
  monotonicity of $\psi$ implies that
  \begin{equation*}
    \Psi_Q(d) = \parens*{\sum_{\q\in\PP_\infty^n\setminus\PP_Q^n} \psi(d\q)^m}^{1/m}
  \end{equation*}
  is nonincreasing, since for every fixed
  $\q\in\PP_\infty^n\setminus\PP_Q^n$, the function
  $\psi(d\q) = \psi(d\abs{\q})$ is nonincreasing.  The inhomogeneous
  version of Khintchine's theorem now gives $\Leb(W_{1,m}(\Psi_Q))=1$,
  and by \th\ref{thm:bootstrap11}, $\Leb(W_{n,m}^P(\psi))=1$.
\end{proof}

\subsection{Gallagher 1965 $\implies$ Beresnevich--Velani 2010}
\label{sec:gall-1965-impl}

Famously, Khintchine's theorem requires a monotonicity assumption in
dimension $1$. But in 1965 Gallagher~\cite{Gallagherkt} showed that
that assumption is unnecessary in higher dimensions. Sprindzuk removed
the monotonicity assumption from the Khintchine--Groshev theorem in
the cases where $n>2$, and in 2010 Beresnevich--Velani were able to
remove the monotonicity assumption from the Khintchine--Groshev
theorem in all cases where $mn>1$~\cite{Sprindzuk,BVKG}. They also
proved a multivariate analog of the Khintchine--Groshev theorem
without monotonicity assumptions, for $m>1$. \th\ref{thm:bootstrap11}
can be used to derive this result from its $n=1$ cases, that is, from
Gallagher's 1965 result. In fact, \th\ref{thm:bootstrap} is sufficient
for this purpose, thanks again to the zero-one
law~\cite[Theorem~1]{BVzeroone}.

\begin{theorem*}[Beresnevich--Velani,~{\cite[Theorem~5]{BVKG}}]
  Fix $m,n\in\NN$ with $m>1$. Let $P(\q) = \ZZ^m$ for every
  $\q\in\ZZ^n$, and $\psi:\ZZ^n \to \Rpos$. Then
  \begin{equation*}
    \Leb(W_{n,m}^P(\psi)) =
    \begin{cases}
      0 &\textrm{if } \sum_{\q\in\ZZ^n} \psi(\q)^m  < \infty \\
      1 &\textrm{if } \sum_{\q\in\ZZ^n} \psi(\q)^m = \infty.
    \end{cases}
  \end{equation*}
\end{theorem*}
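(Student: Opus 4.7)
The plan is to mirror the proof of \th\ref{thm:khintPsi}, with Gallagher's monotonicity-free $1$-by-$m$ theorem playing the role of Khintchine's, and with the same zero-one law supplying the jump from positive to full measure. The convergence direction is a routine Borel--Cantelli computation from $\Leb(A_{n,m}^P(\q,\psi(\q))) \leq (2\psi(\q))^m$.

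For divergence, I would first pass to the positive orthant: splitting $\ZZ^n \setminus \{\bzero\}$ into its $2^n$ orthants, a divergent subsum survives on at least one of them, and after reflecting coordinates we may assume $\psi$ is supported on $\Zpos^n$ with $\sum_{\q \in \Zpos^n}\psi(\q)^m = \infty$. It is immediate that $P(\q) = \ZZ^m$ is well-spread in the sense of \th\ref{def:well}: the set $P(d) = (\ZZ/d\ZZ)^m$ has $d^m$ elements at mutual distance at least $1$, and $ad/(\#P(d))^{1/m} = a$ is satisfied for any $a \leq 1$. In particular $\#P(d) \to \infty$, so both bootstrap theorems are available.

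If there exist $d, Q \geq 1$ with $\Psi_Q(d) = \infty$, then \th\ref{prop:Psifinite} concludes the proof immediately. Otherwise every $\Psi(d)$ is finite and reorganising the double sum gives
\[
\sum_{d \geq 1}\Psi(d)^m \;=\; \sum_{\q \in \Zpos^n\setminus\{\bzero\}}\psi(\q)^m \;=\; \infty.
\]
Since $m > 1$, Gallagher's 1965 theorem applies to the $1$-by-$m$ problem with function $\Psi$ and $P(q) = \ZZ^m$ \emph{without any monotonicity hypothesis on $\Psi$}, yielding $\Leb(W_{1,m}^P(\Psi)) = 1$. Then \th\ref{thm:bootstrap} produces $\Leb(W_{n,m}^P(\psi)) > 0$, and the Beresnevich--Velani zero-one law~\cite[Theorem~1]{BVzeroone} upgrades this to full measure.

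I do not anticipate a serious obstacle: the argument is structurally identical to that of \th\ref{thm:khintPsi}, with Gallagher's monotonicity-free theorem in place of Khintchine's, which is precisely the reason the statement of \th\ref{thm:khintPsi}'s divergence case needed the monotonicity of $\Psi$ while the present statement does not. The only delicate point is confirming that the cited zero-one law applies to a genuinely multivariate $\psi$ rather than only to functions of $\abs{\q}$; as in the proof of \th\ref{thm:khintPsi}, this is within the scope of~\cite{BVzeroone}.
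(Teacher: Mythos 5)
Your proposal is correct and follows essentially the same route as the paper's own proof: reduce to the positive orthant, dispose of the case $\Psi(d)=\infty$ via \th\ref{prop:Psifinite}, apply Gallagher's monotonicity-free theorem to $\Psi$ to get $\Leb(W_{1,m}^P(\Psi))=1$, bootstrap to positive measure via \th\ref{thm:bootstrap}, and conclude full measure from the Beresnevich--Velani zero-one law. No gaps.
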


\begin{proof}[Proof modulo the $n=1$ case]
  The convergence case of this theorem is an application of the
  Borel--Cantelli lemma, so we only need to discuss the divergence
  case.

  Assume that $\sum_{\q\in \ZZ^n}\psi(\q)^m$ diverges. There is some
  orthant of $\ZZ^n$ such that the series still diverges when
  restricted to that orthant, and by applying coordinate reflections
  we may assume without loss of generality that it is the positive
  orthant, $\Zpos^n$. So let us replace $\psi$ with
  $\bone_{\Zpos^n}\psi$.

  Define $\Psi = \Psi_1$ by~(\ref{eq:Psidef}). If there exists
  $d\geq 1$ for which $\Psi(d)=\infty$, then the theorem follows from
  \th\ref{prop:Psifinite}. Otherwise, observe that
  \begin{equation*}
    \sum_{d\geq 1}\Psi(d)^m  = \sum_{\q\in\Zpos^n}\psi(\q)^m
  \end{equation*}
  diverges. Since $m > 1$, Gallagher's extension of Khintchine's
  theorem gives $\Leb(W_{1,m}^P(\Psi)) = 1$. By
  \th\ref{thm:bootstrap}, $\Leb(W_{n,m}^P(\psi))>0$. Applying once
  again Beresnevich and Velani's zero-one law for systems of linear
  forms~\cite[Theorem~1]{BVzeroone}, we have
  $\Leb(W_{n,m}^P(\psi)) = 1$.
\end{proof}

As an immediate corollary, one gets the $m>1$ cases
of~\cite[Theorem~1]{BVKG}, the version where $\psi(\q)$ is constant on
spheres $\abs{\q} = q$.

\subsection{Yu 2019 $\implies$ multivariate version of
  Allen--Ram{\'i}rez 2023 ($m>2$)}
\label{sec:yu-2019-implies}

In 2019, Yu proved that the inhomogeneous Khintchine theorem does not
need a monotonicity assumption in dimension
$m>2$~\cite[Theorem~1.8]{Yu}. In 2023, Allen and the author showed
that one can remove the monotonicity from the inhomogeneous
Khintchine--Groshev theorem whenever
$nm>2$~\cite[Theorem~1]{allen2021independence}. The $m>2$ cases of
this result can now be proved by applying \th\ref{thm:bootstrap11} to
Yu's result. In fact, one can prove the following multivariate
theorem, an inhomogeneous version of
Beresnevich--Velani~\cite[Theorem~5]{BVKG} for $m>2$.

\begin{theorem}\th\label{thm:multivariateAR}
  Fix $m,n\in\NN$ with $m>2$. Let $\y\in\RR^m$ and
  $P(\q) = \y + \ZZ^m$ for every $\q\in\ZZ^n$, and
    $\psi:\ZZ^n \to \Rpos$. Then
  \begin{equation*}
    \Leb(W_{n,m}^P(\psi)) =
    \begin{cases}
      0 &\textrm{if } \sum_{\q\in\ZZ^n} \psi(\q)^m  < \infty \\
      1 &\textrm{if } \sum_{\q\in\ZZ^n} \psi(\q)^m = \infty.
    \end{cases}
  \end{equation*}
\end{theorem}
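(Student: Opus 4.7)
The plan is to follow the template of the preceding applications: reduce to $\psi$ supported on $\Zpos^n$, introduce the auxiliary functions $\Psi_Q$, and then either invoke \th\ref{thm:bootstrap11} with Yu's theorem supplying the $1$-by-$m$ input, or dispose of the degenerate ``$1$-by-$m$-dimensional artifice'' case directly via that same simultaneous theorem.

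First I would dispose of the convergence case by the Borel--Cantelli lemma, exactly as in the proof of \th\ref{thm:khintPsi}. For the divergence case, the reflection argument used in the proof of the Beresnevich--Velani theorem above lets me assume $\psi$ is supported on $\Zpos^n$. Since $P(\q) = \y + \ZZ^m$ is the lift of $\y + (\ZZ/\gcd(\q)\ZZ)^m \subset (\RR/\gcd(\q)\ZZ)^m$, it is well-spread in the sense of \th\ref{def:well}. If there exist $d,Q \geq 1$ with $\Psi_Q(d) = \infty$, then \th\ref{prop:Psifinite} finishes the argument, so I assume otherwise throughout.

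The remaining argument splits on whether $\sum_{d \geq 1} \Psi_Q(d)^m = \infty$ holds for \emph{every} $Q \geq 1$. If so, then since $m > 2$, Yu's theorem (which requires no monotonicity) gives $\Leb(W_{1,m}^P(\Psi_Q)) = 1$ for every $Q$, whence $\inf_Q \Leb(W_{1,m}^P(\Psi_Q)) = 1$ and \th\ref{thm:bootstrap11} delivers $\Leb(W_{n,m}^P(\psi)) = 1$. Otherwise, some $Q_0$ satisfies $\sum_{\q \in \Zpos^n,\ \abs{\q/\gcd(\q)} < Q_0} \psi(\q)^m = \infty$, and because only finitely many primitive $\p \in \Zpos^n$ have $\abs{\p} < Q_0$, I can fix a primitive $\p$ with $\sum_{k \geq 1} \psi(k\p)^m = \infty$ and set $\psi_\p(k) := \psi(k\p)$. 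Yu's theorem then gives $\Leb(W_{1,m}^P(\psi_\p)) = 1$ inside $\TT^m$. The continuous group homomorphism $F_\p \colon \TT^{nm} \to \TT^m$, $\X \mapsto \p\X \bmod \ZZ^m$, is surjective (because $\p$ is primitive, by B\'ezout), and hence preserves normalized Haar measure; a direct check using $\gcd(k\p) = k$ yields $F_\p^{-1}(W_{1,m}^P(\psi_\p)) \subset W_{n,m}^P(\psi)$, forcing $\Leb(W_{n,m}^P(\psi)) = 1$.

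The only genuine obstacle is the second case, where the bootstrap itself is silent: this is precisely the ``$1$-by-$m$-dimensional artifice'' flagged after \th\ref{thm:bootstrap11}, and the linear-map reduction above neutralizes it by recasting the problem as a direct application of Yu's theorem along a single primitive direction.
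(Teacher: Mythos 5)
Your proposal is correct and follows essentially the same route as the paper's own proof: Borel--Cantelli for convergence, reduction to the positive orthant, Proposition~\ref{prop:Psifinite} to dispose of infinite $\Psi_Q$, Yu's theorem feeding Theorem~\ref{thm:bootstrap11} in the generic case, and the measure-preserving projection $\X\mapsto\p\X$ along a primitive direction in the degenerate case. The only cosmetic difference is that you trigger the dichotomy on whether some $\sum_{d}\Psi_{Q_0}(d)^m$ converges, whereas the paper triggers it on whether some primitive fiber sum $\sum_{d}\psi(d\q)^m$ diverges; these are complementary arrangements of the same case split and both versions go through.
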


\begin{proof}
  The convergence part is a standard application of the
  Borel--Cantelli lemma, so we will concentrate on the divergence
  part.

  As in the proof from \S\ref{sec:gall-1965-impl}, no generality is
  lost in treating the case where $\psi:\Zpos^n\to\Rpos$ such that
  $\sum_{\q\in\Zpos^n}\psi(\q)^m$ diverges.

  Let $\Psi(d)$ be defined as in \th\ref{thm:bootstrap}. If there
  exists $d\geq 1$ for which $\Psi(d)=\infty$, then we are done by
  \th\ref{prop:Psifinite}. Assume then that $\Psi(d)$ is always
  finite.

  If there exists $\q\in\Zpos^n$ with $\gcd(\q)=1$ such that
  \begin{equation}\label{eq:25i}
    \sum_{d\geq 1} \psi(d\q)^m = \infty,
  \end{equation}
  then by Yu's result, $\Leb(W_{1,m}^P(\psi_\q))=1$, where
  $\psi_\q:\Zpos\to\Rpos$ is defined by $\psi_\q(d) = \psi(d\q)$. But
  in this case
  \begin{equation*}
     T^{-1}\parens*{W_{1,m}^P(\psi_\q)}\subset W_{n,m}^P(\psi) 
  \end{equation*}
  where $T:\TT^{mn}\to \TT^m$ is the map $\X \mapsto \q\X \pmod
  1$. That map preserves measure, therefore,
  $\Leb(W_{n,m}^P(\psi))=1$.

  On the other hand, suppose there is no $\q\in\Zpos^n$ with
  $\gcd(\q)=1$ for which~(\ref{eq:25i}) holds. Then for every
  $Q\geq 1$,
  \begin{equation*}
    \sum_{d\geq 1} \sum_{\substack{\gcd(\q)=1 \\ \abs{\q} < Q}}
    \psi(d\q)^m  < \infty
  \end{equation*}
  therefore
  \begin{equation*}
    \sum_{d\geq 1}\Psi_Q(d)^m
    = \sum_{d\geq
      1}\Psi(d)^m
      -\sum_{d\geq 1}\sum_{\substack{\gcd(\q)=1 \\ \abs{\q} <
    Q}}\psi(d\q)^m = \infty
  \end{equation*}
  where $\Psi_Q$ is defined by~(\ref{eq:Psidef}). Now,
  by~\cite[Theorem~1.8]{Yu}, $\Leb(W_{1,m}^P(\Psi_Q)) = 1$. Since this
  holds for every $Q\geq 1$, \th\ref{thm:bootstrap11} implies that
  $\Leb\parens*{W_{n,m}^P(\psi)} = 1$, finishing the proof.
\end{proof}

As a corollary, one gets the univariate version of
\th~\ref{thm:multivariateAR}, where $\psi$ depends only on
$\abs{\q}$---that is, the inhomogeneous Khintchine--Groshev theorem
without monotonicity in all cases where $m>2$. This was proved for all
$nm>2$ in~\cite[Theorem~1]{allen2021independence}. In fact, one can
prove a version where the inhomogeneous parameter $\y\in\RR^m$ can
vary with $\q\in\ZZ^n$.

In~\cite[Conjecture~1]{allen2021independence} it is conjectured that
it should be possible to remove the monotonicity condition from the
inhomogeneous univariate Khintchine--Groshev theorem in the remaining
$1$-by-$2$ and $2$-by-$1$ cases. The $1$-by-$2$ case of the
conjecture, combined with \th\ref{thm:bootstrap11}, would imply that
\th~\ref{thm:multivariateAR} also holds for $m=2$, bringing it in line
with the homogeneous theorem of Beresnevich--Velani
in~\S\ref{sec:gall-1965-impl}.

\subsection{Harman 1988 $\implies$ Nesharim--R{\"u}hr--Shi 2020}
\label{sec:nesharim-ruhr-shi}

In 1988 Harman considered approximation of real numbers by rational
numbers $p/q$ whose numerators and denominators lie in prescribed
arithmetic sequences, say $p\equiv r\bmod a$ and $q\equiv s \bmod b$
where $a,b \geq 1$, $0 \leq r < a-1$, and $0\leq s < b-1$ are fixed
integers. He found asymptotics for the number of such rational
approximates to typical real numbers~\cite{HarmanRestricted}, which
yield in particular a Khintchine-type metric dichotomy like the ones
discussed here. That dichotomy was extended to the linear forms
setting in 2020 by Nesharim, R{\"u}hr, and Shi~\cite{NesharimRuhrShi}.

Fix $m,n\in\NN$. For a function $\psi:\ZZ^n \to\Rpos$ and fixed
vectors $(\ba,\bb) \in \NN^m\times \NN^n$ and
$(\br,\bs) \in\ZZ^m\times\ZZ^n$, define
\begin{equation*}
  \calK_{n,m}(\psi, \ba, \bb, \br, \bs) = \set*{\X \in M_{n\times m}(\RR) :\substack{\textrm{ there are infinitely many }
      (\p, \q)\in \ZZ^m\times\ZZ^n \\ \textrm{ satisfying }
      \abs{\q\X - \p} < \psi(\q)
      \textrm{ such that} \\ \p \equiv \br\, (\bmod\,  \ba) \textrm{ and } \q \equiv \bs\, (\bmod\, \bb)}}
\end{equation*}
where $\p \equiv \br\, (\bmod\,  \ba)$ means entrywise
congruence. Nesharim, R{\"u}hr, and Shi prove the following
theorem. Its $n>1$ cases can be obtained from its $n=1$ case using
\th\ref{thm:bootstrap11}.

\begin{theorem*}[Nesharim--R{\"u}hr--Shi~{\cite[Theorem~1.2]{NesharimRuhrShi}}]
  Fix $m,n\in\NN$. Let $\psi:\NN \to\Rpos$ be nonincreasing, and
  fix $(\ba,\bb) \in \NN^m\times \NN^n$ and
  $(\br,\bs) \in\ZZ^m\times\ZZ^n$. Then
  \begin{equation*}
    \Leb\parens*{\calK_{n,m}(\psi, \ba, \bb, \br, \bs)} =
    \begin{cases}
      \textsc{zero} &\textrm{if } \sum_{q=1}^\infty q^{n-1}\psi(q)^m < \infty  \\
      \textsc{full} &\textrm{if } \sum_{q=1}^\infty q^{n-1}\psi(q)^m = \infty
    \end{cases}
  \end{equation*}
  where $\psi(\q):=\psi(\abs{\q})$.
\end{theorem*}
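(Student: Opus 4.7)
The plan mirrors the approach in \S\ref{sec:inhom-khintch-impl}: realize a suitable subset of $\calK_{n,m}(\bar\psi, \ba, \bb, \br, \bs)$ as $W_{n,m}^P(\psi)$ for some $P, \psi$ fitting the bootstrap framework, then apply \th\ref{thm:bootstrap11}. The convergence case is a standard Borel--Cantelli calculation. For divergence, I would first reduce to $\q \in \Zpos^n$ by coordinate reflections (updating $\bs$ in each orthant so that $\sum_q q^{n-1}\bar\psi(q)^m = \infty$ persists), and dispense with the case where some $\Psi_Q(d)$ is infinite via \th\ref{prop:Psifinite}.

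The sublattice $\br + \ba\ZZ^m$ is not $\gcd(\q)\ZZ^m$-periodic when $L := \lcm(a_1, \ldots, a_m)$ fails to divide $\gcd(\q)$, so $P(\q) := \br + \ba\ZZ^m$ does not directly satisfy \th\ref{def:well}. I would therefore rescale by $L$: setting $\Y := \X/L$ and $e_j := L/a_j$, the approximation condition transforms into $\q\Y \in T + \ZZ^m + B(\bzero, \bar\psi(\abs{\q})/L)$, where
\begin{equation*}
T := \set*{\br/L + (k_1/e_1, \ldots, k_m/e_m) : 0 \leq k_j < e_j} \subset \TT^m
\end{equation*}
is a fixed finite set of $\prod_j e_j$ points. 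Taking $P(\q) := T + \ZZ^m$, which is independent of $\q$ and $\ZZ^m$-periodic, makes $P$ well-spread in the sense of \th\ref{def:well} with constants depending only on $\ba$. Full measure of $W_{n,m}^P(\psi)$ in the $\Y$-coordinates translates to full measure of $\calK_{n,m}$ on the fundamental domain $L\TT^{nm}$.

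The key technical issue is that the $\Psi_Q$ derived from the natural choice $\psi(\q) := (\bar\psi(\abs{\q})/L)$ on $\q \equiv \bs \pmod \bb$ (zero elsewhere) is not visibly monotone, because the congruence $d\q' \equiv \bs \pmod \bb$ (with $\q = d\q'$, $\gcd(\q') = 1$) has a $d$-dependent solution set, disrupting the argument of \S\ref{sec:khintch-1924-impl}. I would resolve this by additionally restricting the support of $\psi$ to $\gcd(\q) \equiv 1 \pmod B$ with $B := \lcm(b_1, \ldots, b_n)$: for such $d = \gcd(\q)$, the congruence collapses to $\q' \equiv \bs \pmod \bb$, which is $d$-independent. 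Then
\begin{equation*}
  \Psi_Q(d)^m = \frac{1}{L^m}\sum_{\q' \in S_Q}\bar\psi(d\abs{\q'})^m, \qquad S_Q := \set*{\q' \in \Zpos^n : \abs{\q'} \geq Q,\ \gcd(\q') = 1,\ \q' \equiv \bs \pmod \bb}
\end{equation*}
for $d \equiv 1 \pmod B$ (and $\Psi_Q(d) = 0$ otherwise), which is nonincreasing along the AP by monotonicity of $\bar\psi$. The divergence $\sum_d \Psi_Q(d)^m = \infty$ follows from the divergence hypothesis, since both congruence restrictions impose only positive-density conditions on $\Zpos^n$, and the exclusion $\abs{\q/\gcd(\q)} < Q$ costs only a $O(Q^n/q^{n-1})$ relative fraction per shell $\abs{\q} = q$ when $n > 1$.

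Extending $\Psi_Q$ to a nonincreasing function on $\NN$ (constant between consecutive AP elements), and undoing the rescaling via $Z := L\Y$, places the problem $\Leb(W_{1,m}^P(\Psi_Q)) = 1$ into the exact setup of the $n = 1$ case of Nesharim--R\"{u}hr--Shi with $q$-congruence $q \equiv 1 \pmod B$ and $\p$-congruence $\p \equiv \br \pmod \ba$, which delivers the required full measure. Then \th\ref{thm:bootstrap11} gives $\Leb(W_{n,m}^P(\psi)) = 1$ and the theorem follows. The main obstacle is the monotonicity point above: without the restriction to $\gcd(\q) \equiv 1 \pmod B$, the $d$-dependence of $\{\q' : d\q' \equiv \bs \pmod \bb\}$ prevents direct application of the $n = 1$ simultaneous theorem.
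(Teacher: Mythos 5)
Your overall architecture matches the paper's: rescale so that the congruence targets become a well-spread $P$ in the sense of \th\ref{def:well}, encode the condition $\q\equiv\bs\pmod\bb$ in the support of $\psi$, fix the residue of $\gcd(\q)$ so that the induced condition on the primitive part $\q'$ becomes $d$-independent (restoring monotonicity of $\Psi_Q$), and feed the result into \th\ref{thm:bootstrap11}. However, there is a genuine gap in your monotonicity fix: you restrict the support of $\psi$ to the single residue class $\gcd(\q)\equiv 1\pmod B$. This class can be incompatible with the congruence $\q\equiv\bs\pmod\bb$. For example, with $n=2$, $\bb=(2,2)$, $\bs=(0,0)$, the condition $\q\equiv\bs\pmod\bb$ forces both coordinates of $\q$ to be even, hence $\gcd(\q)\equiv 0\pmod 2$, and your restricted support is empty; the claim that ``both congruence restrictions impose only positive-density conditions'' is false. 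The paper avoids this by decomposing $\psi=\psi_0+\dots+\psi_{b-1}$ according to the residue of $\gcd(\q)$ modulo $b$ and choosing, by pigeonhole, a class $s$ for which $\sum_\q\psi_s(\q)^m$ still diverges; the $n=1$ input is then the base case with denominator congruence $q\equiv s\pmod b$, not $q\equiv 1$. Your argument can be repaired the same way, but as written it fails for admissible data.

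A secondary weakness: you justify $\sum_d\Psi_Q(d)^m=\infty$ for all $Q$ by a shell-density estimate, whereas the paper splits into two cases --- either the divergence concentrates on a single line $\set{d\q'}_{d\geq 1}$ (handled directly by pulling back the $n=1$ theorem along $\X\mapsto\q'\X$), or it does not, in which case removing the finitely many primitive directions with $\abs{\q'}<Q$ subtracts only a finite quantity from a divergent series. Your density argument is plausible for $n\geq 2$ when the only restriction is $\q\equiv\bs\pmod\bb$, but once the additional $\gcd$-congruence is imposed the per-shell count is no longer routine, and the case split is the cleaner (and in the concentrated case, necessary) route.
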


\begin{remark*}
  The reason this theorem is given as a \textsc{zero-full} statement
  instead of a $0$-$1$ statement is that the set $\calK_{n,m}$ is not
  $\ZZ^{mn}$-periodic in general. It is, however,
  $(a\ZZ)^{mn}$-periodic where $a=\lcm(\ba)$. In the following proof,
  we scale the set by $1/a$, resulting in a $\ZZ^{mn}$-periodic setup
  in which we apply~\th~\ref{thm:bootstrap11}.
\end{remark*}

\begin{proof}[Proof modulo the $n=1$ case]
  The convergence case follows from the convergence case of the
  Khintchine--Groshev theorem, so only the divergence case needs
  proving.

  Let
  $\ba=(a_1, \dots, a_m),\bb = (b_1, \dots, b_n), \br = (r_1, \dots,
  r_m)$, and $\bs=(s_1, \dots, s_n)$ be as in the theorem
  statement. Assume, without loss of generality, that
  $0\leq r_i \leq a_i\, (i=1, \dots, m)$ and
  $0\leq s_j \leq b_j, (j = 1, \dots, n)$. Let $a = \lcm(\ba)$ and
  $b = \lcm(\bb)$. Then for all $p\in\ZZ$ and $i=1,2,\dots, m$ we have
  \begin{equation*}
    p \equiv r_i\, (\bmod\, a) \implies p \equiv r_i \, (\bmod\, a_i),
  \end{equation*}
  hence, for all $\p \in \ZZ^m$ we have
  \begin{equation*}
    \p \equiv \br\, (\bmod\, \hat\ba) \implies \p \equiv \br \, (\bmod\, \ba),
  \end{equation*}
  where $\hat\ba = (a, a, \dots, a)$.  Therefore, it suffices to show
  $\Leb\parens*{\calK_{n,m}(\psi, \hat\ba, \bb, \br, \bs)} =
  \textsc{full}$.

  For each $\q\in\Zpos^n$, put
  \begin{equation*}
    P(\q) = \frac{\br}{a} + \ZZ^m
  \end{equation*}
  and note that this defines a well-spread $P$, as in
  Definition~\ref{def:well}. Define $\bar\psi:\Zpos^n\to\Rpos$ by
  \begin{equation*}
    \bar\psi(\q) =
    \begin{cases}
      \frac{1}{a}\psi(\abs{\q}) &\textrm{if } \q \equiv \bs \, (\bmod \bb)\\
      0 &\textrm{otherwise}
    \end{cases}
  \end{equation*}
  and for each $0\leq s \leq b-1$, define
    \begin{equation*}
    \bar\psi_s(\q) =
    \begin{cases}
      \bar\psi(\q) &\textrm{if } \gcd(\q) \equiv s \, (\bmod b)\\
      0 &\textrm{otherwise,}
    \end{cases}
  \end{equation*}
  so that $\bar\psi = \bar\psi_0 + \dots + \bar\psi_{b-1}$. Since $\psi$ is
  nonincreasing and $\sum q^{n-1}\psi(q)^m$ diverges, it is
  readily verified that
  \begin{equation*}
    \sum_{\q\in\Zpos^n}\bar\psi(\q)^m
  \end{equation*}
  also diverges.

  Suppose there is some $s \in \set{0, \dots, b-1}$ and
  $\q' \in \Zpos^n$ with $\gcd(\q')=1$ such that
  \begin{equation}\label{eq:53}
    \sum_{d\geq 1}\bar\psi_s(d\q')^m = \infty. 
  \end{equation}
  Then 
  \begin{equation*}
    \sum_{d\geq 1}\psi(d\abs{\q'})^m = \infty,
  \end{equation*}
  and by the $n=1$ case,
  $\Leb(\calK_{1,m}(\psi_{\q'}, \hat\ba, b, \br, s)) = \textsc{full}$,
  where $\psi_{\q'}(d): = \psi(d\abs{\q'})$. But note that the
  scaled set $\frac{1}{a}\calK_{1,m}(\psi_{\q'}, \hat\ba, b, \br, s)$
  is the lift to $\RR^m$ of $W_{1,m}^P(\bar\psi_{s,\q'})$, where $\bar\psi_{s,\q'}(d):= \bar\psi_s (d\abs{\q'})$, and so
  $\Leb\parens{W_{1,m}^P(\bar\psi_{s,\q'})} = 1$. But
  \begin{equation*}
    T_{\q'}^{-1}\parens{W_{1,m}^P(\bar\psi_{s,\q'})} \subset W_{n,m}^P(\bar\psi_s), 
  \end{equation*}
  where $T_{\q'}:\TT^{mn}\to\TT^m$ is the projection
  $\X\mapsto \q' \X$. Therefore,
  $\Leb\parens{W_{n,m}^P(\bar\psi_s)} = 1$, since $T_{\q'}$ is
  measure-preserving. And the lift of $W_{n,m}^P(\bar\psi_s)$ to
  $M_{n\times m}(\RR)$ is contained in the scaled set
  $\frac{1}{a}\calK_{n,m}(\psi, \hat\ba, \bb, \br, \bs)$, so
  $\Leb\parens{\calK_{n,m}(\psi, \hat\ba, \bb, \br,
    \bs)}=\textsc{full}$.

  Suppose, on the other hand, that there are no
  $s\in\set{0,\dots, b-1}$ and $\gcd(\q')=1$ such that~(\ref{eq:53})
  holds. Choose $s$ so that
  \begin{equation*}
    \sum_{\q\in\Zpos^n}\bar\psi_s(\q)^m = \infty,
  \end{equation*}
  and let $Q\geq 1$. Define $\Psi_{s,Q}$ by~(\ref{eq:Psidef}) applied
  to $\bar\psi_s$. Since~(\ref{eq:53}) holds for no $\q'\in\Zpos^n$,
  it follows that
  \begin{equation*}
    \sum_{d\geq 1}\Psi_{s,Q}(d)^m = \infty.
  \end{equation*}
  Note that $\Psi_{s,Q}$ is supported on $d\equiv s\, (\bmod\,
  b)$. Let $\widehat\Psi_{s,Q}(d)$ be a nonincreasing function such
  that $\widehat\Psi_{s,Q}(d)=\Psi_{s,Q}(d)$ for all
  $d\equiv s\, (\bmod\, b)$. Then
  \begin{equation*}
    \sum_{d\geq 1}\widehat\Psi_{s,Q}(d)^m = \infty,
  \end{equation*}
  so the $n=1$ case of this theorem implies that
  $\Leb\parens{\calK_{1,m}(a\widehat\Psi_{s,Q}, \hat\ba, b, \br, s)} =
  \textsc{full}$. But
  \begin{equation*}
    \calK_{1,m}(a\widehat\Psi_{s,Q}, \hat\ba, b, \br, s) =
    \calK_{1,m}(a\Psi_{s,Q}, \hat\ba, b, \br, s),
  \end{equation*}
  so we have
  $\Leb\parens{\calK_{1,m}(a\Psi_{s,Q}, \hat\ba, b, \br, s)} =
  \textsc{full}$.  Now, the scaled set
  $\frac{1}{a}\calK_{1,m}(a\Psi_{s,Q}, \hat\ba, b, \br, s)$ is the
  lift to $\RR^m$ of $W_{1,m}^P(\Psi_{s,Q})$, so
  $\Leb\parens{W_{1,m}^P(\Psi_{s,Q})}=1$. Since $Q\geq 1$ was
  arbitrary, \th\ref{thm:bootstrap11} implies that
  $\Leb\parens{W_{n,m}^P(\bar\psi_s)}=1$, so
  $\Leb\parens{W_{n,m}^P(\bar\psi)}=1$. Finally, the lift of
  $W_{n,m}^P(\bar\psi)$ to $M_{n\times m}(\RR)$ is a scaling of
  $\calK_{n,m}(\psi, \hat\ba, \bb, \br, \bs)$ by $1/a$, so it follows
  that
  $\Leb\parens{\calK_{n,m}(\psi, \hat\ba, \bb, \br,
    \bs)}=\textsc{full}$ and the proof is finished.
\end{proof}

Adiceam proves the $m=n=1$ case of this theorem with the additional
condition $\gcd(p,q) = \gcd(a,b,r,s)$ imposed in the definition of
$\calK_{1,1}(\psi, a, b, r, s)$ (see
\cite[Theorem~2]{adiceamIJNT}). \th\ref{thm:bootstrap11} can be used
to prove the dual version of that statement, that is, the $m=1$ cases
of the above theorem, with the additional condition
$\gcd(p,\q) = \gcd(a,\bb,r,\bs)$.

\subsection{Inhomogeneous Duffin--Schaeffer $\implies$ same for
  systems of linear forms}
\label{sec:inhom-duff-scha}

In 1941, Duffin and Schaeffer showed by counterexample that the
monotonicity condition cannot be removed from the one-dimensional
version of Khintchine's theorem. In the same paper, they formulated
what became known as the Duffin--Schaeffer
conjecture~\cite{duffinschaeffer}, a problem that stood eight
decades. It was finally proved in a 2020 breakthrough by
Koukoulopoulos and Maynard~\cite{KMDS}. A higher-dimensional version
had been proved in 1990 by Pollington and
Vaughan~\cite{PollingtonVaughan}. The following theorem is the version
of the Duffin--Schaeffer conjecture for systems of linear forms. It
was conjectured by Beresnevich--Bernik--Dodson--Velani~\cite{BBDV} in
2009 and recently proved by the
author~\cite{ramirez2023duffinschaeffer}.

\begin{theorem*}[Duffin--Schaeffer conjecture for systems of linear
  forms]
  Fix $m,n\in\NN$. Let
  \begin{equation*}
    P(\q)
    = \set{\p\in\ZZ^m : \gcd(p_i, \q) = 1,\, i = 1, \dots, m} 
  \end{equation*}
  Then
  \begin{equation*}
    \Leb(W_{n,m}^P(\psi)) =
    \begin{cases}
      0 &\textrm{if } \sum_{\q\in\ZZ^n\setminus\set{\bzero}} \parens*{\frac{\varphi(\gcd(\q))\psi(\q)}{\gcd(\q)}}^m  < \infty \\
      1 &\textrm{if } \sum_{\q\in\ZZ^n\setminus\set{\bzero}} \parens*{\frac{\varphi(\gcd(\q))\psi(\q)}{\gcd(\q)}}^m = \infty.
    \end{cases}
  \end{equation*}
\end{theorem*}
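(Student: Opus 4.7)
The plan follows the template of the preceding applications. The convergence direction is handled by the Borel--Cantelli lemma: the set $A_{n,m}^P(\q,\psi(\q))\subset \TT^{nm}$ has Lebesgue measure bounded by a constant multiple of $(\varphi(\gcd(\q))\psi(\q)/\gcd(\q))^m$, so the convergence hypothesis directly implies $\Leb(W_{n,m}^P(\psi))=0$.

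For the divergence case, restrict $\psi$ to $\Zpos^n$ without loss of generality and define $\Psi_Q$ by~(\ref{eq:Psidef}). If $\Psi_Q(d)=\infty$ for some $d,Q \geq 1$, invoke \th\ref{prop:Psifinite}. Otherwise, the first step is to verify that
\begin{equation*}
P(d) = \set*{\p \in (\ZZ/d\ZZ)^m : \gcd(p_i, d) = 1 \textrm{ for all } i}
\end{equation*}
is relatively well-spread in the sense of \th\ref{def:well}. Since $\#P(d) = \varphi(d)^m$, a sieving argument---the same quantitative separation of coprime residues that underlies the combinatorial heart of Pollington--Vaughan and Koukoulopoulos--Maynard---produces $P'(d) \subset P(d)$ of cardinality $\gg \varphi(d)^m$ whose elements are pairwise separated by $\gg d/\varphi(d)$ in each coordinate.

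The main body of the argument then splits into two subcases via the identity
\begin{equation*}
\sum_{d\geq 1}\parens*{\frac{\varphi(d)\Psi_Q(d)}{d}}^m = \sum_{\substack{\q\in\Zpos^n \\ \abs{\q/\gcd(\q)} \geq Q}}\parens*{\frac{\varphi(\gcd(\q))\psi(\q)}{\gcd(\q)}}^m.
\end{equation*}
If some primitive $\q'\in\Zpos^n$ with $\gcd(\q')=1$ satisfies $\sum_{d\geq 1}(\varphi(d)\psi(d\q')/d)^m = \infty$, then the $1$-by-$m$ Duffin--Schaeffer theorem (Koukoulopoulos--Maynard when $m=1$, Pollington--Vaughan when $m>1$) applied to $\psi_{\q'}(d) := \psi(d\q')$ gives $\Leb(W_{1,m}^P(\psi_{\q'})) = 1$; pulling back through the measure-preserving map $T_{\q'}\colon \X \mapsto \q'\X \pmod 1$ and using $T_{\q'}^{-1}(W_{1,m}^P(\psi_{\q'})) \subset W_{n,m}^P(\psi)$ finishes this subcase. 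Otherwise, every primitive $\q'$ contributes a finite amount to the full series, so deleting the finitely many primitives with $\abs{\q'} < Q$ preserves divergence, yielding $\sum_d (\varphi(d)\Psi_Q(d)/d)^m = \infty$ for every $Q \geq 1$. The $1$-by-$m$ Duffin--Schaeffer theorem then gives $\Leb(W_{1,m}^P(\Psi_Q)) = 1$ uniformly in $Q$, and \th\ref{thm:bootstrap11} delivers $\Leb(W_{n,m}^P(\psi)) = 1$.

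The principal obstacle is the verification of relative well-spreadedness for the coprime residues: this is the quantitative separation property that sits at the heart of the Duffin--Schaeffer circle of ideas, and the only genuinely number-theoretic input required beyond the invocation of the $1$-by-$m$ theorem. Once that ingredient is in place, the remainder is the by-now-familiar bifurcation between a \emph{flat} pullback to a one-dimensional instance via $T_{\q'}$ and a genuinely higher-dimensional application of the bootstrap machinery.
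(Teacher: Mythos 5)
Your proposal is correct and follows essentially the same route the paper intends: convergence by Borel--Cantelli, and for divergence the now-standard dichotomy (pull back through $T_{\q'}$ when some primitive direction carries a divergent series, otherwise feed $\Psi_Q$ into the one-dimensional Pollington--Vaughan/Koukoulopoulos--Maynard theorem and invoke \th\ref{thm:bootstrap11}), with the relative well-spreadedness of the coprime residue classes as the one extra number-theoretic input, which the paper likewise does not reprove but cites from~\cite[Lemma~5]{ramirez2023duffinschaeffer}. Your write-up is in fact more detailed than the paper's one-paragraph remark, and correctly identifies the well-spreadedness verification as the only step not already covered by the general machinery.
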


This theorem also follows easily now, by applying
\th\ref{thm:bootstrap11} with the results of Pollington--Vaughan and
Koukoulopoulos--Maynard as input. The relative well-spreadedness of
$P$ follows from~\cite[Lemma~5]{ramirez2023duffinschaeffer}. In fact,
the proofs of \th\ref{thm:bootstrap,thm:bootstrap11} are abstractions
of the arguments in~\cite{ramirez2023duffinschaeffer}.

In~\cite{ds_counterex}, Duffin--Schaeffer-style counterexamples are
presented, showing that the monotonicity condition cannot be removed
from the one-dimensional inhomogeneous Khintchine theorem, and an
inhomogeneous version of the Duffin--Schaeffer conjecture is
discussed. Specifically, it is the $m=n=1$ version of the above stated
theorem, but with
\begin{equation*}
  P(q)
  = \set{p + y : p\in\ZZ, \gcd(p, q) = 1,\, i = 1, \dots, m}
\end{equation*}
for every $q\in\Zpos$, where $y\in\RR$ is an arbitrarily fixed
inhomogeneous parameter. A version of that conjecture for systems of
linear forms would go as follows.

\begin{conjecture}[Inhomogeneous Duffin--Schaeffer conjecture for
  systems of linear forms]\th\label{conj:inhomdsc}
  Fix $m,n\in\NN$. Let $\y\in\RR^m$ and let
  \begin{equation*}
    P(\q)
    = \set{\p + \y : \p\in\ZZ^m,  \gcd(p_i, \q) = 1,\, i = 1, \dots, m}.
  \end{equation*}
  Then
  \begin{equation*}
    \Leb\parens*{W_{n,m}^P(\psi)} =
    \begin{cases}
      0 &\textrm{if } \sum_{\q\in\ZZ^n\setminus\set{\bzero}} \parens*{\frac{\varphi(\gcd(\q))\psi(\q)}{\gcd(\q)}}^m  < \infty \\
      1 &\textrm{if } \sum_{\q\in\ZZ^n\setminus\set{\bzero}} \parens*{\frac{\varphi(\gcd(\q))\psi(\q)}{\gcd(\q)}}^m = \infty.
    \end{cases}
  \end{equation*}
\end{conjecture}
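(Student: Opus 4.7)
The plan is to mimic the arguments of \S\ref{sec:yu-2019-implies}, with the $1$-by-$m$ case of \th\ref{conj:inhomdsc} playing the role that Yu's theorem does there. As is standard, the convergence half is Borel--Cantelli, so one may focus on the divergence case and reduce to $\psi$ supported on $\Zpos^n$. Here $P$ is the $\y$-translate of the Duffin--Schaeffer set, so relative well-spreadedness is inherited from~\cite[Lemma~5]{ramirez2023duffinschaeffer}. If $\Psi(d)=\infty$ for some $d$, \th\ref{prop:Psifinite} gives the result, so one may also assume $\Psi(d)<\infty$ for every $d$.

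Unrolling $\Psi(d)$ and using the support of $\psi$, the hypothesis becomes
\[
\sum_{d\geq 1}\left(\frac{\varphi(d)\Psi(d)}{d}\right)^m = \sum_{\q\in\Zpos^n}\left(\frac{\varphi(\gcd(\q))\psi(\q)}{\gcd(\q)}\right)^m = \infty.
\]
Writing each $\q\in\Zpos^n$ uniquely as $d\q'$ with $\gcd(\q')=1$, I would split into two cases. In Case~1, there exists $\q'\in\Zpos^n$ with $\gcd(\q')=1$ such that $\sum_d (\varphi(d)\psi(d\q')/d)^m=\infty$; applying the $1$-by-$m$ case of \th\ref{conj:inhomdsc} to $\psi_{\q'}(d):=\psi(d\q')$ yields $\Leb(W_{1,m}^P(\psi_{\q'}))=1$. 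The measure-preserving map $T_{\q'}:\TT^{nm}\to\TT^m$ given by $\X\mapsto\q'\X\pmod 1$ satisfies $T_{\q'}^{-1}(W_{1,m}^P(\psi_{\q'}))\subset W_{n,m}^P(\psi)$, since each $\q=d\q'$ has $\gcd(\q)=d$ and the coprimality conditions $\gcd(p_i,d)=1$ are preserved under shifts by integer multiples of $d$. Hence $W_{n,m}^P(\psi)$ has full measure.

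In Case~2, no such $\q'$ exists, so every inner sum $\sum_d (\varphi(d)\psi(d\q')/d)^m$ is finite. Removing the finitely many terms with $|\q'|<Q$ from the double sum therefore still leaves it divergent, which gives $\sum_d (\varphi(d)\Psi_Q(d)/d)^m=\infty$ for every $Q\geq 1$. The $1$-by-$m$ case of \th\ref{conj:inhomdsc} then produces $\Leb(W_{1,m}^P(\Psi_Q))=1$ for all $Q$, and \th\ref{thm:bootstrap11} concludes $\Leb(W_{n,m}^P(\psi))=1$. The only genuine obstacle is the $1$-by-$m$ case of \th\ref{conj:inhomdsc} itself, which has not been formulated explicitly---only the $m=n=1$ version appears in~\cite{ds_counterex}; once that hypothetical input is in hand, the transition to general $n\geq 1$ is a direct adaptation of the argument already used for Yu's theorem.
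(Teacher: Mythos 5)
The statement you have been asked to prove is presented in the paper as a conjecture, not a theorem: its $1$-by-$m$ case (indeed even the $m=n=1$ case from~\cite{ds_counterex}) is open, so no unconditional proof is possible, and you rightly flag this. Modulo that caveat, your argument is essentially identical to the paper's proof of the conditional reduction \th\ref{thm:hypotheticalidsc} --- the same dichotomy between a single primitive $\q'$ carrying divergence (handled by the measure-preserving fibering $\X\mapsto\q'\X$) and divergence of $\Psi_Q$ for every $Q$ (handled by \th\ref{thm:bootstrap11}) --- so it is correct as a proof of exactly what the paper proves about this statement.
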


There has not been much progress on \th~\ref{conj:inhomdsc} in the
$m=n=1$ setting, although some evidence in favor of it is presented
in~\cite{ds_counterex} and special cases of a weak version of it
(stated below as \th\ref{thm:chau}) have been established by Chow and
Technau. Part of what makes the problem difficult is that the
coprimality requirement no longer accomplishes what it accomplished in
the homogeneous setting. In the homogeneous setting, the coprimality
condition avoids the main issue underlying the Duffin--Schaeffer
counterexample, namely, that the measure sum $\sum\psi(q)$ appearing
in the non-reduced (Khintchine) setup may be extremely redundant due
to coincidences of the form $p_1/q_1 = p_2/q_2$ for $q_1\neq q_2$. In
the inhomogeneous setting, the coprimality condition does not directly
address the issue underlying the counterexamples
from~\cite{ds_counterex}, namely, that the measure sum $\sum\psi(q)$
appearing in the non-reduced (inhomogeneous Khintchine) setup may be
extremely redundant due to \emph{near} coincidences of the form
$(p_1 + y)/q_1 \approx (p_2 + y)/q_2$ for $q_1\neq q_2$. One could
take this as a sign that \th~\ref{conj:inhomdsc} may not be the
``correct'' inhomogeneous generalization of
Duffin--Schaeffer. (Another is proposed by
Chow--Technau~\cite[Question~1.24]{chow2023littlewood}.) Nevertheless,
it is open, and a solution one way or the other would be interesting.

Regarding the higher-dimensional cases of \th~\ref{conj:inhomdsc}, a
univariate version---meaning that $\psi$ only depends on
$\abs{\q}$---has been established for $n>2$ by Allen and
the author~\cite[Theorem~3]{allen2023inhomogeneous}, lending further
support for the plausibility of the conjecture.

In any case, \th\ref{thm:bootstrap11} allows one to prove the
following conditional result.

\begin{theorem}\th\label{thm:hypotheticalidsc}
  Fix $m\in\NN$ and $\y\in\RR^m$. If \th~\ref{conj:inhomdsc} holds
  with inhomogeneous parameter $\y$ and with $n=1$, then it holds with
  inhomogeneous parameter $\y$ for all $n\geq 1$. In particular, if
  the $m$-dimensional inhomogeneous Duffin--Schaeffer conjecture is
  true, then so is the $n$-by-$m$-dimensional inhomogeneous
  Duffin--Schaeffer conjecture.
\end{theorem}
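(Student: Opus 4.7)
The plan is to mirror the strategy used for the Nesharim--R\"uhr--Shi theorem in \S\ref{sec:nesharim-ruhr-shi}, replacing the arithmetic-progression set with the inhomogeneous coprime set. Assume the $n=1$ case of \th~\ref{conj:inhomdsc} with parameter $\y$, and let $\psi:\ZZ^n\to\Rpos$. The convergence case is a routine application of the Borel--Cantelli lemma, so I focus on divergence. As in the earlier applications, restricting to the orthant where the series still diverges and applying coordinate reflections allows me to assume $\psi$ is supported on $\Zpos^n$. The set $P(\q)=\set{\p+\y:\p\in\ZZ^m,\gcd(p_i,\q)=1}$ is relatively well-spread in the sense of \th~\ref{def:well}: the translation by $\y$ preserves all pairwise distances, so this is inherited from the homogeneous relative well-spreadness established in~\cite[Lemma~5]{ramirez2023duffinschaeffer}. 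If some $\Psi_Q(d)=\infty$, then \th~\ref{prop:Psifinite} finishes the proof; otherwise every $\Psi_Q(d)$ is finite.

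The key identity, obtained by substituting~\eqref{eq:Psidef} and using $\gcd(\q)=d$ in the inner sum, is
\begin{equation*}
  \sum_{d\geq 1}\parens*{\frac{\varphi(d)\Psi_Q(d)}{d}}^m = \sum_{\substack{\q\in\Zpos^n \\ \abs{\q/\gcd(\q)}\geq Q}}\parens*{\frac{\varphi(\gcd(\q))\psi(\q)}{\gcd(\q)}}^m.
\end{equation*}
Now bifurcate exactly as in \S\ref{sec:nesharim-ruhr-shi}. In the first case, suppose there exists $\q'\in\Zpos^n$ with $\gcd(\q')=1$ such that $\sum_{d}(\varphi(d)\psi(d\q')/d)^m=\infty$. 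Setting $\psi_{\q'}(d):=\psi(d\q')$, the $n=1$ hypothesis yields $\Leb(W_{1,m}^P(\psi_{\q'}))=1$. The map $T_{\q'}:\TT^{nm}\to\TT^m$, $\X\mapsto \q'\X\pmod 1$, is a measure-preserving surjective group homomorphism (since $\gcd(\q')=1$), and unpacking the definitions along the subsequence $\q=d\q'$ shows $T_{\q'}^{-1}(W_{1,m}^P(\psi_{\q'}))\subset W_{n,m}^P(\psi)$, giving $\Leb(W_{n,m}^P(\psi))=1$.

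In the second case, no such $\q'$ exists, so every ray-sum $\sum_{d}(\varphi(d)\psi(d\q')/d)^m$ is finite. For any fixed $Q$, the difference between the full divergence sum for $\psi$ and the right-hand side of the displayed identity is a finite sum of these finite ray-sums, taken over the finitely many $\q'\in\Zpos^n$ with $\gcd(\q')=1$ and $\abs{\q'}<Q$. Hence the $1$-by-$m$ inhomogeneous Duffin--Schaeffer divergence condition holds for $\Psi_Q$ for every $Q\geq 1$; applying the hypothesis gives $\Leb(W_{1,m}^P(\Psi_Q))=1$ uniformly in $Q$, and \th\ref{thm:bootstrap11} (applicable since $P$ is relatively well-spread and $\#P(d)=\varphi(d)^m\to\infty$) concludes $\Leb(W_{n,m}^P(\psi))=1$. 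The only step that is not purely bookkeeping is the verification of relative well-spreadness, but because the inhomogeneous shift is an isometry of $(\RR/d\ZZ)^m$ this is immediate from the homogeneous case, so there is no substantive obstacle.
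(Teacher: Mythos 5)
Your proposal is correct and follows essentially the same route as the paper's proof: the same dichotomy over whether some primitive ray $\q'$ carries a divergent sum $\sum_d(\varphi(d)\psi(d\q')/d)^m$, with the measure-preserving projection $\X\mapsto\q'\X$ handling the first case and the identity $\sum_d(\varphi(d)\Psi_Q(d)/d)^m=\sum_{\abs{\q/\gcd(\q)}\geq Q}(\varphi(\gcd(\q))\psi(\q)/\gcd(\q))^m$ plus \th\ref{thm:bootstrap11} handling the second. Your explicit verification of relative well-spreadness and of $\#P(d)\to\infty$ is a welcome addition that the paper leaves implicit.
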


\begin{proof}
  As usual, the convergence part is not at issue. Let
  $\psi:\ZZ^n\to\Rpos$ satisfy the divergence condition of the
  theorem. As we have done in \S\ref{sec:gall-1965-impl} and in the
  proof of \th~\ref{thm:multivariateAR}, we may focus on case where
  $\psi:\Zpos^n\to\Rpos$ and
  \begin{equation*}
    \sum_{\q\in\Zpos^n\setminus\set{\bzero}} \parens*{\frac{\varphi(\gcd(\q))\psi(\q)}{\gcd(\q)}}^m = \infty.
  \end{equation*}
  That is, $\psi$ is supported in $\Zpos^n$.

  If there exists $\q\in\Zpos^n$ with $\gcd(\q)=1$ such that
  \begin{equation}\label{eq:25}
    \sum_{d\geq 1} \parens*{\frac{\varphi(d)\psi(d\q)}{d}}^m = \infty,
  \end{equation}
  then by assumption $\Leb\parens{W_{1,m}^P(\psi_\q)}=1$, where
  $\psi_\q:\Zpos\to\Rpos$ by $\psi_\q(d) = \psi(d\q)$. But in this case
  \begin{equation*}
    T^{-1}\parens*{W_{1,m}^P(\psi_\q)} \subset W_{n,m}^P(\psi), 
  \end{equation*}
  where $T:\TT^{mn}\to \TT^m$ is the map $\X \mapsto \q\X \pmod
  1$. That map preserves measure, therefore,
  $\Leb(W_{n,m}^P(\psi))=1$.

  On the other hand, suppose there is no $\q\in\Zpos^n$ with $\gcd(\q)=1$ for
  which~(\ref{eq:25}) holds. Then for every $Q\geq 1$, 
  \begin{equation}\label{eq:26}
    \sum_{d\geq 1} \sum_{\substack{\gcd(\q)=1 \\ \abs{\q} < Q}}
    \parens*{\frac{\varphi(d)\psi(d\q)}{d}}^m  < \infty
  \end{equation}
  therefore
  \begin{equation*}
    \sum_{d\geq 1}\parens*{\frac{\varphi(d)\Psi_Q(d)}{d}}^m
    = \sum_{d\geq
    1}\parens*{\frac{\varphi(d)\Psi(d)}{d}}^m
    - \sum_{d\geq 1}\sum_{\substack{\gcd(\q)=1 \\ \abs{\q} <
    Q}}\parens*{\frac{\varphi(d)\psi(d\q)}{d}}^m = \infty
  \end{equation*}
  where $\Psi_Q$ is defined
  by~(\ref{eq:Psidef}). \th~\ref{prop:Psifinite} shows that we may
  assume that $\Psi_Q(d)$ is always finite. Now, by assumption,
  $\Leb(W_{1,m}^P(\Psi_Q)) = 1$. Since $Q\geq 1$ was arbitrary,
  \th\ref{thm:bootstrap11} gives $\Leb(W_{n,m}^P(\psi)) = 1$,
  finishing the proof.
\end{proof}

\subsection{Chow--Technau 2023 $\implies$ a version for dual approximation}

For many of the years during which the Duffin--Schaeffer conjecture
was open, there was a seemingly weaker related problem, Catlin's
conjecture~(1976, \cite{CatlinConj}), that also remained open and was
only established as a consequence of Duffin--Schaeffer once
Koukoulopoulos and Maynard proved it~\cite{KMDS}. The following is the
version of Catlin's conjecture for systems of linear forms, proved in
dimensions $m>1$ by Beresnevich--Velani~\cite{BVKG} and $m=1, n>1$ by
the author~\cite{ramirez2023duffinschaeffer}.

\begin{theorem*}[Catlin's conjecture for systems of linear forms]
  Fix $m,n\in\NN$. For $\q\in\ZZ^n$, let
  \begin{equation*}
    \Phi_m(\q) = \#\set{\p\in\ZZ^m : \abs{\p} \leq \abs{\q},
      \gcd(\p,\q)=1},
  \end{equation*}
  and $P(\q) = \ZZ$. Then
  \begin{equation*}
    \Leb(W_{n,m}^P(\psi)) =
    \begin{cases}
      0 &\textrm{if } \sum_{\q\in\ZZ^n\setminus\set{0}}\Phi_m(\q) \sup_{t\geq 1}\parens*{\frac{\psi(t\q)}{t\abs{\q}}}^m  < \infty \\
      1 &\textrm{if } \sum_{\q\in\ZZ^n\setminus\set{0}}\Phi_m(\q) \sup_{t\geq 1}\parens*{\frac{\psi(t\q)}{t\abs{\q}}}^m = \infty.
    \end{cases}
  \end{equation*}
\end{theorem*}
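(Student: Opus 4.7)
The plan is to mirror the applications in \S\ref{sec:yu-2019-implies} and \S\ref{sec:inhom-duff-scha}. The convergence direction is a standard Borel--Cantelli argument, so only the divergence case requires work. As usual, one reduces to $\psi$ supported on $\Zpos^n$, and \th\ref{prop:Psifinite} lets us assume $\Psi_Q(d)$ is always finite. The set $P(\q) = \ZZ^m$ is well-spread in the sense of \th\ref{def:well}, so \th\ref{thm:bootstrap11} is available as the main tool.

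Before reparametrizing, I would Catlinize $\psi$ by passing to $\psi^\sharp(\q) := \sup_{t\geq 1}\psi(t\q)/t$. This satisfies $\psi^\sharp(t\q)\leq t\psi^\sharp(\q)$, so $\sup_{t\geq 1}(\psi^\sharp(t\q)/(t|\q|))^m = (\psi^\sharp(\q)/|\q|)^m$. The observation that $\abs{\q\X - \p}<\psi(t\q)/t$ implies $\abs{(t\q)\X - t\p}<\psi(t\q)$ gives $W_{n,m}^P(\psi) = W_{n,m}^P(\psi^\sharp)$, so one may assume $\psi=\psi^\sharp$, reducing the divergence hypothesis to $\sum_{\q\in\Zpos^n}\Phi_m(\q)(\psi(\q)/|\q|)^m = \infty$.

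Next I would reparametrize $\q = d\r$ with $d=\gcd(\q)$ and $\gcd(\r)=1$, using the standard Jordan-totient estimate $\Phi_m(\q) \asymp |\r|^m\Phi_m(d)$, obtained by counting primitive vectors of sup-norm at most $d|\r|$ by residues modulo $d$. This rewrites the divergence as
\begin{equation*}
  \sum_{d\geq 1}\Phi_m(d)\parens*{\frac{\Psi(d)}{d}}^m = \infty,
\end{equation*}
and since $\psi(td\r)\leq t\psi(d\r)$ forces $\Psi(td)\leq t\Psi(d)$, the function $\Psi$ is itself Catlinized, so the $\sup$ in the one-dimensional Catlin series for $\Psi$ is attained at $t=1$. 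I would then split into two cases as in \S\ref{sec:inhom-duff-scha}: either some primitive $\r$ with $|\r|<Q$ satisfies $\sum_d\Phi_m(d)\psi(d\r)^m/d^m = \infty$, in which case the one-dimensional Catlin conjecture gives $\Leb(W_{1,m}^P(\psi_\r))=1$ with $\psi_\r(d)=\psi(d\r)$ and the measure-preserving pullback $\X\mapsto\r\X$ yields $\Leb(W_{n,m}^P(\psi))=1$; or the finitely many small-$|\r|$ contributions are convergent, and subtracting them leaves $\sum_d\Phi_m(d)(\Psi_Q(d)/d)^m = \infty$ for every $Q\geq 1$.

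In the second case, the one-dimensional Catlin conjecture---implied by Pollington--Vaughan for $m>1$ and Koukoulopoulos--Maynard for $m=1$---gives $\Leb(W_{1,m}^P(\Psi_Q))=1$ for every $Q$, and \th\ref{thm:bootstrap11} delivers $\Leb(W_{n,m}^P(\psi))=1$. The main obstacle is the $\sup_{t\geq 1}$ inside the Catlin sum together with the arithmetic factor $\Phi_m(\q)$, which entangle contributions from the cones $\set{t\q : t\geq 1}$; the Catlinization step and the Jordan-totient reparametrization are what make the transfer of divergence from the $n$-dimensional Catlin series to the one-dimensional Catlin series for $\Psi_Q$ clean.
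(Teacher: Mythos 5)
The paper does not actually prove this theorem: it is quoted as known, with the $m>1$ cases attributed to Beresnevich--Velani \cite{BVKG} and the $m=1$, $n>1$ cases to \cite{ramirez2023duffinschaeffer}. Your proposal is therefore a genuinely different route --- a new application of \th\ref{thm:bootstrap11} in the exact mold of the paper's proofs of \th\ref{thm:multivariateAR} and \th\ref{thm:hypotheticalidsc}, which is very much in the spirit of the paper. The two ingredients you add to that template both check out: the Catlinization $W_{n,m}^P(\psi)=W_{n,m}^P(\psi^\sharp)$ (the nontrivial inclusion holds because infinitely many distinct $\q_i$ must produce infinitely many distinct $t_i\q_i$, since only finitely many $\q$ divide a given vector), and the estimate $\Phi_m(d\r)\asymp|\r|^m\Phi_m(d)$ for primitive $\r$ (inclusion--exclusion over divisors of $d$, with error $O(2^m\tau(d))=o(\Phi_m(d))$). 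Together with $\Psi_Q(td)\leq t\,\Psi_Q(d)$, these convert the $n$-dimensional Catlin series into $\sum_d\Phi_m(d)(\Psi_Q(d)/d)^m$, and your dichotomy (either one primitive direction $\r$ already carries a divergent series, handled by the measure-preserving pullback $\X\mapsto\r\X$, or every $Q$ leaves a divergent series for $\Psi_Q$) is exactly the paper's own case analysis. Since $P(d)=(\ZZ/d\ZZ)^m$ is well-spread with $\#P(d)=d^m\to\infty$ and the $n=1$ inputs give $\inf_Q\Leb(W_{1,m}^P(\Psi_Q))=1$, \th\ref{thm:bootstrap11}(a) applies. What your approach buys is a uniform derivation of all $n\geq1$ cases from the $n=1$ cases, rather than separate references for $m>1$ and $m=1$.

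One caveat: when $m=1$ the convergence half is \emph{not} ``a standard Borel--Cantelli argument'' applied to the sets $A_{n,1}^P(\q,\psi(\q))$. Their measure sum is $\asymp\sum_\q\psi(\q)$, which can diverge while the Catlin series converges, because the density factor $\Phi_1(\q)/|\q|\asymp\varphi(\gcd(\q))/\gcd(\q)$ can be small. You need the regrouping that your Catlinization already points to: cover $W(\psi^\sharp)$ by balls centred at the reduced points $\p/d$ in each primitive direction, each ball having radius $\sup_{t}\psi(td\r)/(td)$; the Catlin series is precisely the measure sum of that regrouped family, and Borel--Cantelli is applied to it. This is Catlin's original observation and entirely standard, but it should be stated, since the naive measure sum genuinely fails for $m=1$. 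For $m\geq2$ the naive argument does work, as $\Phi_m(\q)\asymp|\q|^m$.
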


It is not clear what the appropriate version of Catlin's conjecture
should be in the inhomogeneous setting. In the classical statement,
the series has a clear purpose: it ensures that each rational number
$p/q$ contributes exactly once to the measure sum, with the maximal
measure of an approximation interval of which it is the center. The
series does something similar in higher dimensions $n,m\geq 1$. But if
an inhomogeneous parameter $\y\neq \bzero$ is introduced, then this
effect is lost.

On the other hand,
in~\cite{randomfractions,ramirez2023duffinschaeffer} it is shown that
in the cases where $m=1$ (the ``dual'' cases), the divergence part of
Catlin's conjecture for systems of linear forms is equivalent to the
following weak version of the Duffin--Schaeffer conjecture.

\begin{theorem*}[Weak dual Duffin--Schaeffer conjecture]
  Fix $n\in\NN$ and for every $\q\in\ZZ^n$ let $P(\q) = \ZZ$. Then
  \begin{equation*}
    \Leb(W_{n,1}^P(\psi)) = 1 \qquad\textrm{if}\qquad \sum_{\q\in\ZZ^n\setminus\set{\bzero}} \frac{\varphi(\gcd(\q))\psi(\q)}{\gcd(\q)} = \infty.
  \end{equation*}
\end{theorem*}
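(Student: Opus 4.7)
The plan is to derive the $n\geq 1$ case from the $n=1$ case via \th\ref{thm:bootstrap11} applied with $m=1$ and $P(\q)=\ZZ$. The $n=1$ instance itself follows from Koukoulopoulos--Maynard: the set $W_{1,1}^P(\psi)$ contains the smaller set obtained by imposing $\gcd(p,q)=1$ on the approximating numerators, and that smaller set has full measure by~\cite{KMDS} whenever the divergence condition holds. Moreover, with $m=1$ each $P(d)$ collapses to a single point of $\RR/d\ZZ$, so relative well-spreadedness in the sense of \th\ref{def:well} is vacuous, and \th\ref{thm:bootstrap11} applies.

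Following the template of \th\ref{thm:multivariateAR} and \th\ref{thm:hypotheticalidsc}, replace $\psi$ by its restriction to $\Zpos^n$; by applying coordinate reflections, the divergence condition can be arranged to persist in that orthant. If there exists $\q'\in\Zpos^n$ with $\gcd(\q')=1$ such that
\[
\sum_{d\geq 1} \frac{\varphi(d)\psi(d\q')}{d} = \infty,
\]
then the $n=1$ case applied to $\psi_{\q'}(d):=\psi(d\q')$ gives $\Leb(W_{1,1}^P(\psi_{\q'}))=1$, and pulling back by the measure-preserving map $T_{\q'}:\X\mapsto \q'\X \pmod{1}$ from $\TT^n$ to $\TT$ yields $\Leb(W_{n,1}^P(\psi))=1$, exactly as in the proof of \th\ref{thm:hypotheticalidsc}.

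Otherwise, for every $Q\geq 1$ the portion of the defining series coming from $\q$ with $\abs{\q/\gcd(\q)}<Q$ is finite (finitely many reduced directions, each with a convergent series by assumption), so subtracting yields
\[
\sum_{d\geq 1} \frac{\varphi(d)\Psi_Q(d)}{d} = \infty,
\]
where $\Psi_Q$ is defined by~\eqref{eq:Psidef}. (If $\Psi_Q(d)=\infty$ for some $d$ and $Q$, conclude instead via \th\ref{prop:Psifinite}.) The $n=1$ case applied to $\Psi_Q$ then gives $\Leb(W_{1,1}^P(\Psi_Q))=1$ for every $Q\geq 1$, hence $\inf_Q \Leb(W_{1,1}^P(\Psi_Q))=1>0$, and \th\ref{thm:bootstrap11} delivers $\Leb(W_{n,1}^P(\psi))=1$. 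No step poses a genuine obstacle; the proof is a direct transplant of the pattern of \th\ref{thm:hypotheticalidsc}, the only point of minor care being that the $\varphi(\gcd(\q))/\gcd(\q)$ weighting survives the subtraction passage from $\psi$ to $\Psi_Q$, which it does because only finitely many reduced $\q'$ are subtracted off.
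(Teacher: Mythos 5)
Your proof is correct, but it takes a genuinely different (and longer) route than the paper. The paper disposes of this statement in one line: since $P(\q)=\ZZ$ contains the coprimality-restricted set $\set{p\in\ZZ : \gcd(p,\q)=1}$, the set $W_{n,1}^P(\psi)$ contains the corresponding Duffin--Schaeffer limsup set, which has full measure under the stated divergence condition by the (already established) $n$-by-$1$ Duffin--Schaeffer theorem of \S\ref{sec:inhom-duff-scha}. You instead re-run the bootstrap machinery from scratch on the unrestricted target $P=\ZZ$: establish the $1$-by-$1$ case from Koukoulopoulos--Maynard by the same containment, then lift to $n$-by-$1$ via \th\ref{thm:bootstrap11} following the template of \th\ref{thm:hypotheticalidsc}. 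Your case split and the divergence bookkeeping for $\Psi_Q$ are all sound (the weight $\varphi(d)/d$ indeed survives because only finitely many primitive directions are subtracted). What your route buys is independence from the $n$-by-$1$ Duffin--Schaeffer theorem and hence from the relative well-spreadedness of the coprime target set (which needs \cite[Lemma~5]{ramirez2023duffinschaeffer}); what it costs is redoing work the paper has already packaged. One factual slip to fix: with $P(\q)=\ZZ$ and $m=1$, the set $P(d)\subset\RR/d\ZZ$ is \emph{not} a single point --- it is $\set{0,1,\dots,d-1}$, so $\#P(d)=d$. Well-spreadedness is therefore not vacuous but holds with $a=c=1$ since $ad/(\#P(d))^{1/m}=1\leq\abs{\p_1-\p_2}$ for distinct integers; more importantly, if $\#P(d)$ really were $1$, the hypothesis $\lim_{d\to\infty}\#P(d)=\infty$ of \th\ref{thm:bootstrap11} would fail and your application of it would collapse. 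The correct count $\#P(d)=d\to\infty$ is what makes the theorem applicable.
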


This theorem is a direct consequence of the $n$-by-$1$
Duffin--Schaeffer conjecture.  Of course, its statement can be made
inhomogeneous exactly as in \th~\ref{conj:inhomdsc}.

\begin{conjecture}[Weak inhomogeneous Duffin--Schaeffer conjecture for
  systems of linear forms]\th\label{conj:weakidsc}
  Fix $m,n\in\NN$ and $\y\in\RR^m$. For every $\q\in\ZZ^n$ let
  $P(\q) = \y + \ZZ^m$. Then
  \begin{equation*}
    \Leb(W_{n,m}^P(\psi)) = 1 \qquad\textrm{if}\qquad \sum_{\q\in\ZZ^n\setminus\set{\bzero}} \parens*{\frac{\varphi(\gcd(\q))\psi(\q)}{\gcd(\q)}}^m = \infty.
  \end{equation*}
\end{conjecture}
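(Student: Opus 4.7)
The plan is to mirror the proof of \th\ref{thm:hypotheticalidsc}, taking the $1$-by-$m$ case of \th\ref{conj:weakidsc} as input and bootstrapping to general $n\geq 1$ via \th\ref{thm:bootstrap11}. As usual, only the divergence half needs attention, so an orthant decomposition lets me assume $\psi$ is supported in $\Zpos^n$ with $\sum_{\q}(\varphi(\gcd(\q))\psi(\q)/\gcd(\q))^m=\infty$. The coprimality conditions defining $P(\q)$ make $P$ relatively well-spread in the sense of \th\ref{def:well}: the inhomogeneous translate by $\y$ does not affect mutual distances, so relative well-spreadedness follows from~\cite[Lemma~5]{ramirez2023duffinschaeffer} just as in \S\ref{sec:inhom-duff-scha}.

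I would then split into two cases. First, suppose there exists $\q^\star \in \Zpos^n$ with $\gcd(\q^\star)=1$ such that
\begin{equation*}
\sum_{d\geq 1}\parens*{\frac{\varphi(d)\psi(d\q^\star)}{d}}^m = \infty.
\end{equation*}
Then the hypothesized $1$-by-$m$ case, applied to $\psi_{\q^\star}(d) := \psi(d\q^\star)$, gives $\Leb(W_{1,m}^P(\psi_{\q^\star})) = 1$. Pulling back under the measure-preserving map $T:\TT^{nm}\to\TT^m$, $\X\mapsto \q^\star\X\pmod 1$, yields $T^{-1}(W_{1,m}^P(\psi_{\q^\star}))\subset W_{n,m}^P(\psi)$ and hence $\Leb(W_{n,m}^P(\psi))=1$. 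If no such $\q^\star$ exists, then for every $Q\geq 1$ the sum
\begin{equation*}
\sum_{d\geq 1}\sum_{\substack{\gcd(\q)=1 \\ \abs{\q}<Q}}\parens*{\frac{\varphi(d)\psi(d\q)}{d}}^m
\end{equation*}
is finite (being a finite sum over $\q$ of convergent series in $d$), so subtracting it from the full divergent sum yields $\sum_{d\geq 1}(\varphi(d)\Psi_Q(d)/d)^m = \infty$. By \th\ref{prop:Psifinite} I may assume $\Psi_Q(d)$ is always finite, whereupon the hypothesized $1$-by-$m$ case gives $\Leb(W_{1,m}^P(\Psi_Q))=1$ for every $Q\geq 1$, and \th\ref{thm:bootstrap11} concludes $\Leb(W_{n,m}^P(\psi))=1$.

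Structurally the plan is identical to the proof of \th\ref{thm:hypotheticalidsc}: the bootstrap only needs the full-measure conclusion in the divergent regime, which is precisely what the weak conjecture provides, so the machinery is indifferent to whether the $1$-by-$m$ input is a full dichotomy or merely its divergence half. The sole obstacle is therefore the $1$-by-$m$ case of \th\ref{conj:weakidsc} itself, which remains genuinely open; only the special cases available through the Chow--Technau work can presently be fed into the bootstrap, so the outcome of this plan is a conditional rather than an unconditional theorem.
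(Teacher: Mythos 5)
Your proposal is correct and is exactly the paper's treatment of this statement: the statement is a conjecture, and the paper's only result about it is the conditional reduction \th\ref{thm:hypotheticalwidsc}, proved (as you do) by running the argument of \th\ref{thm:hypotheticalidsc} verbatim --- the dichotomy between a single primitive direction $\q^\star$ carrying divergence versus divergence of $\sum_d(\varphi(d)\Psi_Q(d)/d)^m$ for every $Q$, followed by \th\ref{thm:bootstrap11}. One small correction: in \th\ref{conj:weakidsc} the set $P(\q)=\y+\ZZ^m$ carries no coprimality condition, so its well-spreadedness is immediate (it is the translated integer lattice, with $c=1$) rather than following from \cite[Lemma~5]{ramirez2023duffinschaeffer}, which is needed only for the coprime set of \th\ref{conj:inhomdsc}.
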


The set $P(\q)$ appearing in \th~\ref{conj:inhomdsc} is a subset of
the one appearing in \th~\ref{conj:weakidsc}, which is why
\th~\ref{conj:weakidsc} is referred to as weaker. But, of course, that
may not mean it is easier to prove.

Again, \th\ref{thm:bootstrap11} allows one to prove a conditional
result.

\begin{theorem}\th\label{thm:hypotheticalwidsc}
  Fix $m\in\NN$ and $\y\in\RR^m$. If \th~\ref{conj:weakidsc} holds
  with inhomogeneous parameter $\y$ and with $n=1$, then it holds with
  inhomogeneous parameter $\y$ for all $n\geq 1$. In particular, if
  the $m$-dimensional weak inhomogeneous Duffin--Schaeffer conjecture
  is true, then so is the $n$-by-$m$-dimensional weak inhomogeneous
  Duffin--Schaeffer conjecture.
\end{theorem}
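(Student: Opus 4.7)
The plan is to mirror, almost verbatim, the proof of \th~\ref{thm:hypotheticalidsc}. The two theorems have the same structure, and the only relevant difference is that the target set in \th~\ref{conj:weakidsc} is $P(\q)=\y+\ZZ^m$, without the coprimality restriction that appears in \th~\ref{conj:inhomdsc}. This makes the verification of the hypotheses of \th~\ref{thm:bootstrap11} easier, not harder: the fact that $P(\q)=\y+\ZZ^m$ is well-spread in the sense of \th~\ref{def:well} is exactly the verification already carried out in \S\ref{sec:inhom-khintch-impl}, and $\#P(d)=d^m\to\infty$, so \th~\ref{thm:bootstrap11} is available.

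After the standard reduction to $\psi:\Zpos^n\to\Rpos$ (by restricting to a single orthant on which the divergence persists and applying coordinate reflections, as in \S\ref{sec:gall-1965-impl}), I would split into two cases. In the first case, there exists $\q\in\Zpos^n$ with $\gcd(\q)=1$ such that
\[
  \sum_{d\geq 1}\parens*{\frac{\varphi(d)\psi(d\q)}{d}}^m=\infty.
\]
Setting $\psi_\q(d)=\psi(d\q)$, the $n=1$ hypothesis of \th~\ref{conj:weakidsc} gives $\Leb(W_{1,m}^P(\psi_\q))=1$. Since the map $T:\TT^{nm}\to\TT^m$ defined by $T(\X)=\q\X\pmod 1$ is measure-preserving and $T^{-1}(W_{1,m}^P(\psi_\q))\subset W_{n,m}^P(\psi)$, this yields $\Leb(W_{n,m}^P(\psi))=1$.

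In the remaining case, no such $\q$ exists, so for every $Q\geq 1$ the finite correction
\[
  \sum_{d\geq 1}\sum_{\substack{\gcd(\q)=1\\ \abs{\q}<Q}}\parens*{\frac{\varphi(d)\psi(d\q)}{d}}^m
\]
is finite. Subtracting this from the divergent series $\sum_{d\geq 1}(\varphi(d)\Psi(d)/d)^m$ shows that $\sum_{d\geq 1}(\varphi(d)\Psi_Q(d)/d)^m=\infty$ for every $Q\geq 1$. By \th~\ref{prop:Psifinite} we may assume $\Psi_Q(d)<\infty$ for all $d,Q$, so the $n=1$ hypothesis applied to $\Psi_Q$ gives $\Leb(W_{1,m}^P(\Psi_Q))=1$ uniformly in $Q$. \th~\ref{thm:bootstrap11} then concludes $\Leb(W_{n,m}^P(\psi))=1$.

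There is essentially no obstacle here beyond the case split and bookkeeping: the well-spreadedness of $P(\q)=\y+\ZZ^m$ and the hypothesis $\#P(d)\to\infty$ are immediate, and both cases reduce, via either a measure-preserving projection or a direct appeal to \th~\ref{thm:bootstrap11}, to the $n=1$ instance of \th~\ref{conj:weakidsc} that is being assumed. The proof is in effect the proof of \th~\ref{thm:hypotheticalidsc} with the coprimality restriction on $P$ dropped throughout.
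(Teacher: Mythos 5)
Your proposal is correct and matches the paper exactly: the paper's own proof of this theorem is the single sentence ``The proof is identical to that of \th~\ref{thm:hypotheticalidsc},'' and your writeup is precisely that proof with the coprimality condition dropped, including the same case split on whether~\eqref{eq:25}-type divergence holds along some primitive $\q$ and the same appeals to the measure-preserving projection, \th~\ref{prop:Psifinite}, and \th~\ref{thm:bootstrap11}.
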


\begin{proof}
  The proof is identical to that of \th~\ref{thm:hypotheticalidsc}.
\end{proof}

In~\cite{chow2023littlewood}, Chow and Technau prove
\th~\ref{conj:weakidsc} in dimension $m=n=1$ for a special class of
functions.

\begin{theorem}[Chow--Technau,~{\cite[Theorem~1.23]{chow2023littlewood}}]\th\label{thm:chau}
  Let $k\geq 1$. Fix $\bga=(\ga_1, \dots, \ga_k)\in\RR^k$ and
  $\gamma_1, \dots, \gamma_k\in\RR$. For $k=1$, suppose that
  $\alpha_1$ is an irrational non-Liouville number, and for $k\geq 2$
  assume
  \begin{equation*}
    \sup\set*{w>0 : \norm{q\ga_1}\cdot\norm{q\ga_2}\cdots\norm{q\ga_k} < q^{-w} \textrm{ for infinitely many } q\in\NN} < \frac{k}{k-1}.
  \end{equation*}
  Let $\bar\Psi:\NN\to\Rpos$ be a nonincreasing function such that
  $\sum\bar\Psi(q)(\log q)^k$ diverges, and let
  \begin{equation*}
    \Psi(q) = \frac{\bar\Psi(q)}{\norm{q\alpha_1 - \gamma_1}\cdots\norm{q\alpha_k - \gamma_k}}\qquad (q\in\NN). 
  \end{equation*}
  Then \th~\ref{conj:weakidsc} holds in dimension $m=n=1$ for $\Psi$. 
\end{theorem}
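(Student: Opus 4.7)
The plan is to establish full measure of $W = W_{1,1}^P(\Psi)$ by proving quasi-independence on average for the sets
\begin{equation*}
A_q = \set*{x\in\TT : \abs{qx - y - p} < \Psi(q) \text{ for some } p \in \ZZ},
\end{equation*}
whose limsup is $W$. The inequality defining $A_q$ rewrites as $\abs{qx - y - p}\cdot \prod_{i=1}^k \abs{q\alpha_i - \gamma_i - n_i} < \bar\Psi(q)$ for appropriate integers $p, n_1,\dots,n_k$, recasting the problem as an inhomogeneous multiplicative Littlewood-type approximation for the tuple $(x,\alpha_1,\dots,\alpha_k)$ with driving function $\bar\Psi$; the $(\log q)^k$ weight will emerge from integrating the reciprocal factors against the equidistribution of $(q\alpha_1,\dots,q\alpha_k)$ modulo $1$.

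The first step is to convert the divergence hypothesis into a lower bound on $\sum_{q\leq N}\Leb(A_q)$. Truncating so that $\Psi(q)\leq 1/2$, one has $\Leb(A_q)\asymp \Psi(q)$. The Diophantine hypothesis on $(\alpha_1,\dots,\alpha_k)$ rules out abnormally small values of $\prod_i \abs{q\alpha_i - \gamma_i - n_i}$: for $k=1$, the non-Liouville condition yields a polynomial lower bound; for $k\geq 2$, the joint product exponent threshold $k/(k-1)$ is calibrated so that the exceptional $q$ are negligible. A discrepancy-type estimate, stratifying by the joint level sets of $\abs{q\alpha_i-\gamma_i-n_i}$, then yields
\begin{equation*}
  \sum_{q\leq N} \Psi(q) \asymp \sum_{q\leq N}\bar\Psi(q)(\log q)^k,
\end{equation*}
which diverges by assumption, and similarly $\sum_{q\leq N}\varphi(q)\Psi(q)/q$ diverges since $\varphi(q)/q \asymp 1$ on average.

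The core of the argument is a correlation bound of the form
\begin{equation*}
  \Leb(A_q\cap A_r) \lesssim \Leb(A_q)\Leb(A_r) + E(q,r),
\end{equation*}
with the error satisfying $\sum_{q,r\leq N}E(q,r)\ll \bigl(\sum_{q\leq N}\Leb(A_q)\bigr)^2$ along a suitable sequence $N\to\infty$. This is obtained by Fourier expansion of the indicators $\bone_{A_q}$: the zero-frequency term produces the product of measures, while nonzero frequencies generate exponential sums in $(y,\gamma_1,\dots,\gamma_k)$ twisted by multiples of $q$ and $r$. The Diophantine hypothesis on $\alpha_1,\dots,\alpha_k$ is precisely what allows these off-diagonal contributions to be bounded on average over $(q,r)$, and monotonicity of $\bar\Psi$ decouples this average bound from the individual sizes $\Psi(q)$. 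Once quasi-independence on average is in hand, \th~\ref{prop:erdoschung} gives $\Leb(W)>0$, and a zero-one law for inhomogeneous approximation (alternatively, a density argument of the kind used in \S\ref{sec:aboutproofs}) promotes this to $\Leb(W)=1$.

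The main obstacle is the simultaneous control of the arithmetic and Diophantine ingredients in $E(q,r)$: the divisor contributions from $\gcd(q,r)$ must be weighed against the exponential-sum contributions arising from $\alpha_1,\dots,\alpha_k$, and the threshold $k/(k-1)$ for $k\geq 2$ is exactly the boundary at which the required uniform bound becomes available. This step, rather than the Borel--Cantelli mechanics, is the technical heart of the argument.
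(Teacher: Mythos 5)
This statement is not proved in the paper at all: it is imported verbatim from Chow--Technau \cite[Theorem~1.23]{chow2023littlewood} and used as a black box (its role in the paper is only as input to \th\ref{thm:chaubootstrap}), so there is no internal proof to compare your attempt against. Judged on its own terms, your sketch has a genuine gap precisely at what you yourself call the technical heart. The correlation bound $\Leb(A_q\cap A_r)\lesssim\Leb(A_q)\Leb(A_r)+E(q,r)$ with $\sum_{q,r\leq N}E(q,r)\ll\parens*{\sum_{q\leq N}\Leb(A_q)}^2$ is asserted, not derived: after Fourier expansion, bounding the off-diagonal exponential sums \emph{on average over $(q,r)$}, uniformly in the shifts $y,\gamma_1,\dots,\gamma_k$, and weighted by the wildly varying radii $\Psi(q)=\bar\Psi(q)/\prod_i\norm{q\alpha_i-\gamma_i}$, is the entire content of the Chow--Technau theorem; in their memoir this requires substantial structural (Bohr-set) machinery, and saying that the Diophantine hypothesis ``is precisely what allows these off-diagonal contributions to be bounded'' restates the goal rather than proving it. Note also that in this inhomogeneous setting there is no coprimality to prevent the interval overlaps that Pollington--Vaughan-type divisor estimates exploit --- the paper itself emphasizes this obstruction in \S\ref{sec:inhom-duff-scha} --- so the ``divisor contributions from $\gcd(q,r)$'' cannot be dispatched by the standard homogeneous Duffin--Schaeffer overlap analysis you implicitly invoke.

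Two further steps are also unjustified. First, the claimed asymptotic $\sum_{q\leq N}\Psi(q)\asymp\sum_{q\leq N}\bar\Psi(q)(\log q)^k$ overclaims: the upper bound can fail, since the hypothesis constrains only the homogeneous products $\norm{q\alpha_1}\cdots\norm{q\alpha_k}$ and nothing prevents individual $\norm{q\alpha_i-\gamma_i}$ from being extremely small; only a divergence lower bound is needed, and even that requires an argument about the distribution of the shifted denominators, not merely equidistribution plus ``a discrepancy-type estimate.'' Second, the final promotion from $\Leb(W)>0$ to full measure leans on an unspecified ``zero-one law for inhomogeneous approximation''; no such law applicable to these non-monotone $\Psi$ is available in the paper (the density mechanism of \th\ref{lem:lebesguedensity} would require the locally uniform positive-proportion estimates you have not established), and Chow--Technau obtain full measure by a different, direct route. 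As it stands, the proposal is an outline of a plausible strategy whose decisive estimates are missing.
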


Using \th~\ref{thm:bootstrap11}, \th~\ref{thm:chau} can be
bootstrapped to prove a special case of the dual version of
\th~\ref{conj:weakidsc}.

\begin{theorem}[Special case of dual weak inhomogeneous
  Duffin--Schaeffer conjecture]\th\label{thm:chaubootstrap}
  Fix $n\geq 1$. Let $\bga$ and $\gamma_1, \dots, \gamma_k$ be as
  in \th~\ref{thm:chau}. Let $\bar\psi:\NN\to\Rpos$ be
  nonincreasing and such that
  \begin{equation}\label{eq:42}
    \sum_{\q\in\ZZ^n\setminus\set{\bzero}} \bar\psi(\abs{\q})(\log \gcd(\q))^k = \infty.
  \end{equation}
  Let $\psi:\ZZ^n\to\Rpos$ be defined by
  \begin{equation*}
    \psi(\q) = \frac{\bar\psi(\abs{\q})}{\norm{d\alpha_1 - \gamma_1}\cdots\norm{d\alpha_k - \gamma_k}}\qquad (d = \gcd(\q)). 
  \end{equation*}
  Then the $n$-by-$1$ \th~\ref{conj:weakidsc} holds for $\psi$.
\end{theorem}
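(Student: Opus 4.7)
The plan is to follow the proof of \th\ref{thm:hypotheticalwidsc} almost verbatim, with Chow--Technau's theorem (\th\ref{thm:chau}) in place of the assumed $n=1$ instance of \th\ref{conj:weakidsc}. Set $P(\q) = \ZZ$ for every $\q\in\ZZ^n$, so that $\#P(d) = d \to \infty$ and $P$ is well-spread in the sense of \th\ref{def:well}; this puts us in the hypotheses of \th\ref{thm:bootstrap11}(a). Passing to the positive orthant exactly as in \S\ref{sec:gall-1965-impl}, we may assume $\psi$ is supported on $\Zpos^n$, and if some $\Psi_Q(d)$ is infinite then \th\ref{prop:Psifinite} finishes the argument, so we may further assume that all $\Psi_Q(d)$ are finite.

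Next, split according to whether some ``radial'' sum diverges. Suppose first that there exists $\q' \in \Zpos^n$ with $\gcd(\q')=1$ such that $\sum_{d\geq 1} \bar\psi(d\abs{\q'})(\log d)^k = \infty$. Then the function $\psi_{\q'}(d):=\psi(d\q') = \bar\psi(d\abs{\q'})/\prod_{i=1}^k \norm{d\alpha_i - \gamma_i}$ has precisely the shape treated by \th\ref{thm:chau}: nonincreasing numerator $d \mapsto \bar\psi(d\abs{\q'})$, the same denominator, and exactly the divergence condition Chow--Technau require. That theorem then supplies $\Leb(W_{1,1}^P(\psi_{\q'}))=1$, and pulling back along the measure-preserving map $T_{\q'}: \TT^n \to \TT$, $\X \mapsto \q'\X \pmod 1$ gives $\Leb(W_{n,1}^P(\psi))=1$ via the inclusion $T_{\q'}^{-1}(W_{1,1}^P(\psi_{\q'})) \subset W_{n,1}^P(\psi)$.

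Otherwise, every radial sum converges. For each $Q \geq 1$, factor $\Psi_Q(d) = \bar\Psi_Q(d)/\prod_{i=1}^k \norm{d\alpha_i - \gamma_i}$, where $\bar\Psi_Q(d) := \sum_{\gcd(\q')=1,\,\abs{\q'}\geq Q} \bar\psi(d\abs{\q'})$; since each summand is nonincreasing in $d$, so is $\bar\Psi_Q$. Rearranging~(\ref{eq:42}) by $d = \gcd(\q)$ yields $\sum_d \bar\Psi_1(d)(\log d)^k = \infty$, and subtracting the finitely many tails $\sum_d \bar\psi(d\abs{\q'})(\log d)^k$ indexed by $\q'$ with $\gcd(\q')=1$ and $\abs{\q'}<Q$---each convergent by the current case assumption---leaves $\sum_d \bar\Psi_Q(d)(\log d)^k = \infty$. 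Applying \th\ref{thm:chau} to $\bar\Psi_Q$ then gives $\Leb(W_{1,1}^P(\Psi_Q))=1$ for every $Q \geq 1$, and \th\ref{thm:bootstrap11}(a) concludes $\Leb(W_{n,1}^P(\psi))=1$.

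The main obstacle---as throughout \S\ref{sec:applications}---is the bookkeeping in this final step: transferring the Chow--Technau-type divergence from the ambient hypothesis~(\ref{eq:42}) down to each one-dimensional auxiliary function $\bar\Psi_Q$. The dichotomy that separates out the radially-divergent case and the finiteness of the radial sums in the complementary case are precisely what allow that transfer, after which both \th\ref{thm:chau} and \th\ref{thm:bootstrap11}(a) slot in as stated.
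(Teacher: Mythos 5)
Your argument is essentially the paper's proof: the same dichotomy on whether some radial sum $\sum_d\bar\psi(d\abs{\q'})(\log d)^k$ diverges, the same pullback along $T_{\q'}$ in the divergent case, the same factorization $\Psi_Q(d)=\bar\Psi_Q(d)/\prod_i\norm{d\alpha_i-\gamma_i}$ with the same tail-subtraction to get $\sum_d\bar\Psi_Q(d)(\log d)^k=\infty$, and the same applications of \th\ref{thm:chau} and \th\ref{thm:bootstrap11}. The only (cosmetic) discrepancy is that you set $P(\q)=\ZZ$, i.e.\ the homogeneous case, whereas \th\ref{conj:weakidsc} requires $P(\q)=y+\ZZ$ for an arbitrary fixed $y\in\RR$; since \th\ref{thm:chau} is already the inhomogeneous statement, nothing in your argument changes under that substitution.
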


\begin{proof}
  We lose no generality in considering only $\psi:\Zpos^n\to\Rpos$
  satisfying~\eqref{eq:42}.

  Let $y \in \RR$ and $P\equiv y + \ZZ$. The aim is to show that
  $\Leb\parens{W_{n,1}^P(\psi)}=1$. Suppose there exists
  $\q\in\Zpos^n$ with $\gcd(\q)=1$ such that
  \begin{equation}\label{eq:43}
    \sum_{d\geq 1} \bar\psi(\abs{d\q})(\log \gcd(d\q))^k = \sum_{d\geq 1} \bar\psi(\abs{d\q})(\log d)^k = \infty.
  \end{equation}
  Then with
  \begin{equation*}
    \bar\Psi_\q(d):=\bar\psi(\abs{d\q})\qquad\textrm{and}\qquad \tilde \Psi(d) := \psi(d\q)
  \end{equation*}
  playing the roles of $\bar\Psi$ and $\Psi$ from \th~\ref{thm:chau},
  that theorem gives $W_{1,1}^P(\tilde\Psi)=1$. But
  \begin{equation*}
    T^{-1}\parens*{W_{1,1}^P(\tilde\Psi)} \subset W_{n,1}^P(\psi)
  \end{equation*}
  where $T:\TT^{n}\to\TT$ is the measure-preserving $\X\mapsto \q\X$,
  so $\Leb\parens{W_{n,1}^P(\psi)}=1$ and the proof is finished.

  Assume, therefore, that there is no $\q\in\Zpos^n$ with $\gcd(\q)=1$
  for which~(\ref{eq:43}) holds. Consequently, for every $Q\geq 1$, we
  have
  \begin{equation}\label{eq:44}
    \sum_{\substack{\q\in\Zpos^n \\ \abs{\q/\gcd(\q)}\geq Q}} \bar\psi(\abs{\q})(\log \gcd(\q))^k = \infty.
  \end{equation}
  Define $\Psi$ and $\Psi_Q$ as in
  \th~\ref{thm:bootstrap,thm:bootstrap11}. Notice that
  \begin{align*}
    \Psi_Q(d)
    &= \sum_{\substack{\gcd(\q)=d \\ \abs{\q/\gcd(\q)}\geq Q}} \frac{\bar\psi(\abs{\q})}{\norm{d\alpha_1 - \gamma_1}\cdots\norm{d\alpha_k - \gamma_k}} \\
    &= \frac{\bar\Psi_Q(d)}{\norm{d\alpha_1 - \gamma_1}\cdots\norm{d\alpha_k - \gamma_k}}
  \end{align*}
  where
  \begin{equation*}
    \bar\Psi_Q(d) = \sum_{\substack{\gcd(\q)=1 \\ \abs{\q}\geq Q}} \bar\psi(\abs{d\q}).
  \end{equation*}
  Since $\bar\psi$ is nonincreasing, so is $\bar\Psi_Q(d)$. Observe that
  \begin{align*}
    \sum_{d\geq 1} \bar\Psi_Q(d)(\log d)^k &=     \sum_{d\geq 1}\sum_{\substack{\gcd(\q)=1 \\ \abs{\q}\geq Q}} (\log d)^k \bar\psi(\abs{d\q})\\
    &=     \sum_{\substack{\q\in\Zpos^n \\ \abs{\q/\gcd(\q)}\geq Q}}  \bar\psi(\abs{\q})(\log \gcd(\q))^k,
  \end{align*}
  which diverges, per~(\ref{eq:44}). Therefore, $\Psi_Q$ satisfies the
  conditions from \th~\ref{thm:chau}, hence
  $\Leb(W_{1,1}^P(\Psi_Q)) = 1$. Finally, since $Q\geq 1$ was
  arbitrary, \th~\ref{thm:bootstrap11} implies that
  $\Leb(W_{n,1}^P(\psi)) = 1$, as desired.
\end{proof}

\section{Examples}
\label{sec:examples}

In each of the applications in~\S\ref{sec:applications}, a
full-measure set in $\TT^m$ is bootstrapped to a full-measure set in
$\TT^{mn}$. Yet \th\ref{thm:bootstrap,thm:bootstrap11} do not require
full-measure in $\TT^m$ in order to apply. The next example presents
an application of \th~\ref{thm:bootstrap11} where a set of positive
and copositive measure in $\TT^m$ is bootstrapped to a full-measure
set in $\TT^{nm}$.

\begin{example}[Positive, copositive $\longrightarrow$
  full]\th\label{ex:postofull}
  Let $m>1$. For $d\geq 1$, let
  \begin{equation}\label{eq:51}
    P(d) = \set*{\p : 0\leq p_i \leq \frac{d}{2},\, i=1, \dots, m}.
  \end{equation}
  Let $\psi:\Zpos\to\Rpos$ be such that $\sum q^{n-1}\psi(q)^m$
  diverges, where $n>1$, and extend $\psi$ to $\Zpos^n$ by setting
  $\psi(\q) = \psi(\abs{\q})$. Let $Q\geq 1$. Then, as was shown
  in~\S\ref{sec:inhom-khintch-impl}, the series $\sum \Psi_Q(d)^m$
  also diverges, where $\Psi_Q$ is as in \th~\ref{thm:bootstrap11}. By
  Gallagher's extension of Khintchine's theorem,
  $\Leb\parens{W_{1,m}^\ZZ(\Psi_Q)}=1$, therefore,
  $\Leb\parens{W_{1,m}^P(\Psi_Q)}=2^{-m} > 0$. Since $P$ is
  well-spread in the sense of \th~\ref{def:well},
  \th~\ref{thm:bootstrap11} applies to give
  $\Leb\parens{W_{n,m}^P(\psi)}=1$.
\end{example}

The next is an example where \th\ref{thm:bootstrap11} does not apply
but \th\ref{thm:bootstrap} does. A positive, copositive measure set in
$\TT^m$ is bootstrapped to a positive, copositive measure set in
$\TT^{mn}$, illustrating the importance of the assumption on $\Psi_Q$
appearing in \th\ref{thm:bootstrap11}.

\begin{example}[Positive, copositive $\longrightarrow$ positive, copositive]\th\label{ex:postopos}
  Fix $m,n\in\NN$ and for $d\geq 1$, define $P(d)$ as
  in~(\ref{eq:51}). Let $\psi: \Zpos^n \to \Rpos$ be a function
  supported on vectors of the form $\q = (d, 0, 0, \dots, 0)$ and such
  that $\sum \psi(\q)^m$ diverges and $\psi(\q) = o(\abs{\q})$. Then
  the associated function $\Psi$ obviously satisfies
  $\sum_d \Psi(d)^m=\infty$, and Khintchine's theorem can be used to
  show that $W_{1,m}^P (\Psi) = 2^{-m}$. (It has full measure in
  $[0,1/2]^m$.) \th\ref{thm:bootstrap} gives
  $\Leb\parens{W_{n,m}^P(\psi)}>0$. But
  \begin{equation*}
    W_{n,m}^P(\psi) = W_{1,m}^P (\Psi) \times [0,1]^{(n-1)m},
  \end{equation*}
  and so $\Leb\parens{W_{n,m}^P(\psi)}=2^{-m}$.

  This example is easily generalized to situations where $\psi$ is
  supported on points $d\q'\, (d\geq 1)$ for some fixed $\q'$ with
  $\gcd(\q')=1$. This is the artificially $1$-by-$m$-dimensional
  construction mentioned after the statement of
  \th\ref{thm:bootstrap11}, and it is what the assumption on $\Psi_Q$
  avoids.
\end{example}

One may wonder whether \th~\ref{thm:bootstrap} might achieve a
full-measure set in $\TT^{mn}$ if it is further assumed that
$\Leb\parens{W_{1,m}^P(\Psi)}=1$. The next example shows that
\th~\ref{thm:bootstrap} does not necessarily bootstrap full measure to
full measure.

\begin{example}[Full $\longrightarrow$ positive,
  copositive]\th\label{ex:fulltopos}
  Let $(m,n)=(1,2)$. For $d\geq 1$ odd, let
  \begin{equation*}
    P(d) = \set*{p : 0\leq p \leq \frac{d}{2}}.
  \end{equation*}
  and for $d\geq 1$ even, let
  \begin{equation*}
    P(d) = \set*{p : \frac{d}{2}\leq p \leq d}.
  \end{equation*}
  Let $\psi:\Zpos\to\Rpos$ be nonincreasing and such that
  $\sum \psi(q)$ diverges. Extend $\psi$ to $\Zpos^2$ by setting
  \begin{equation*}
    \psi(\q) =
    \begin{cases}
      \psi(d) &\textrm{if } \q = \begin{pmatrix}d & 0\end{pmatrix}, d \textrm{ odd}\\
      \psi(d) &\textrm{if } \q = \begin{pmatrix}0 & d\end{pmatrix}, d \textrm{ even} \\
      0 &\textrm{otherwise.}
    \end{cases}
  \end{equation*}
  Notice that $\Psi(d) = \psi(d)$, where $\Psi$ is defined as in
  \th~\ref{thm:bootstrap}. By Khintchine's theorem,
  $\Leb\parens{W_{1,1}^\ZZ(\Psi)}=1$, and since $\Psi$ is
  nonincreasing it can be shown that
  $\Leb\parens{W_{1,1}^P(\Psi)}=1$. But $P$ is well-spread, so
  \th~\ref{thm:bootstrap} implies that
  $\Leb\parens{W_{2,1}^P(\psi)}>0$. Evidently, this cannot be improved
  to full measure. After all, if $\psi(q) = o(q)$, then
  \begin{equation*}
    W_{2,1}^P(\psi)\subset \parens*{[0,1/2]\times [0,1]} \cup\parens*{[0,1]\times [1/2, 1]} \subset \TT^2.
  \end{equation*}
  It has measure at most $3/4$.
\end{example}

\section{Measure-theoretic and geometric tools}
\label{sec:meas-theor-geom}

This section is a collection of important measure-theoretic and
geometric facts.

The first few illustrate a theme that is prevalent in metric number
theory: in order to show that the limsup set for a sequence
$(A_q)_{q=1}^\infty$ of subsets in a probability space has positive
measure, one should try to show that the sets $A_q$ are in some sense
independent. For example, the second Borel--Cantelli lemma says that
if they are pairwise independent and have diverging measure-sum, then
the limsup set has full measure. In practice, one must work with
weaker forms of independence. The ones that are most useful in metric
number theory are averaged forms of independence, like the ones
exhibited in the following propositions.

\begin{proposition}[Chung--Erd\H{o}s Lemma,~\cite{chungerdos}]\th\label{prop:erdoschung}
  If $(X, \mu)$ is a probability space and
  $(A_q)_{q \in \NN}\subset X$ is a sequence of measurable subsets
  such that $0 < \sum_{q} \mu(A_q) < \infty$, then
  \begin{equation*}
    \mu\parens*{\bigcup_{q=1}^\infty  A_q} \geq \frac{\parens*{\sum_{q=1}^\infty \mu(A_q)}^2}{\sum_{q,r=1}^\infty \mu(A_q\cap A_r)}.
  \end{equation*}
\end{proposition}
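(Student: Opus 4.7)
The plan is to prove this by a Cauchy--Schwarz argument applied to an appropriate counting function, first in a truncated form and then by passage to the limit.

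For each $N\geq 1$, let $B_N = \bigcup_{q=1}^N A_q$ and consider the counting function
\[
  f_N = \sum_{q=1}^N \bone_{A_q}.
\]
Notice that $f_N$ is supported on $B_N$. I would apply the Cauchy--Schwarz inequality to the factorization $f_N = f_N \cdot \bone_{B_N}$, obtaining
\[
  \parens*{\int f_N\diff\mu}^2  = \parens*{\int f_N \bone_{B_N}\diff\mu}^2 \leq \mu(B_N)\int f_N^2\diff\mu.
\]
Expanding the two integrals yields $\int f_N\diff\mu = \sum_{q\leq N}\mu(A_q)$ and $\int f_N^2\diff\mu = \sum_{q,r\leq N}\mu(A_q\cap A_r)$, so after rearranging one obtains the finite-union analog of the desired inequality.

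To finish, I would let $N\to\infty$. Continuity of measure from below gives $\mu(B_N)\nearrow \mu\parens*{\bigcup_q A_q}$, and monotone convergence (all terms are nonnegative) shows that both $\sum_{q\leq N}\mu(A_q)$ and $\sum_{q,r\leq N}\mu(A_q\cap A_r)$ converge to their infinite counterparts. The hypothesis $0 < \sum_q \mu(A_q) < \infty$ guarantees that the numerator in the limit is a well-defined positive real number, so the inequality passes cleanly to the limit. In the degenerate case where the double sum $\sum_{q,r}\mu(A_q\cap A_r)$ is infinite the asserted inequality is vacuous, and otherwise the finite-$N$ estimate extends by the convergence just described.

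There is no real obstacle here: the only subtle point is to make sure that the limit-passage is justified even when the denominator happens to be infinite, which is handled by treating that case separately as a vacuous one. The heart of the argument is the one application of Cauchy--Schwarz that converts the union into an integral of $\bone_{B_N}$ against $f_N$.
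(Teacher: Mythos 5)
Your proof is correct and follows the same route as the paper: establish the finite-union inequality, then let $N\to\infty$ using continuity of measure and monotone convergence, treating the case of an infinite denominator as vacuous. The only difference is that you supply the standard Cauchy--Schwarz argument for the finite case, which the paper simply cites from the Chung--Erd\H{o}s reference.
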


\begin{proof}
  This result is proved in~\cite{chungerdos} for finitely many sets
  $A_1, \dots, A_Q$, with the conclusion
  \begin{equation*}
    \mu\parens*{\bigcup_{q=1}^Q  A_q} \geq \frac{\parens*{\sum_{q=1}^Q \mu(A_q)}^2}{\sum_{q,r=1}^Q \mu(A_q\cap A_r)}.
  \end{equation*}
  This immediately implies
  \begin{equation*}
    \mu\parens*{\bigcup_{q=1}^\infty  A_q} \geq \frac{\parens*{\sum_{q=1}^Q \mu(A_q)}^2}{\sum_{q,r=1}^Q \mu(A_q\cap A_r)}.
  \end{equation*}
  for every $Q\geq 1$. The result stated here follows upon taking $Q\to\infty$. 
\end{proof}

The next result is an extension of the second Borel--Cantelli lemma
showing that a sequence of sets that are \emph{quasi-independent on
  average} and that have diverging measure sum must have a
positive-measure limsup set.
  
\begin{proposition}[Erd{\H o}s--Renyi,~\cite{ErdosRenyiBC}]\th\label{prop:erdosrenyi}
  Let $(X, \mu)$ be a probability space and $(A_k)_{k\geq 1}$ a
  sequence of measurable sets such that $\sum_{k\geq 1}\mu(A_k)$
  diverges. If there exists a constant $C\geq 0$ such that for
  infinitely many $D$,
\begin{equation}\label{eq:32}
  \sum_{k ,\ell \leq D} \mu(A_k\cap A_\ell)
  \leq C \parens*{\sum_{k \leq D} \mu(A_k)}^2
\end{equation}
then
\begin{equation*}
  \mu\parens*{\limsup_{k\to\infty} A_k} \geq \frac{1}{C}. 
\end{equation*}
\end{proposition}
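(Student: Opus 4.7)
The plan is to deduce this from the Chung--Erd\H{o}s lemma (\th\ref{prop:erdoschung}) applied to tails of the sequence $(A_k)$, together with the divergence hypothesis and the inequality~\eqref{eq:32}. Let $S(D) = \sum_{k \leq D} \mu(A_k)$, so $S(D) \to \infty$ by assumption. Let $\mathcal{D} \subset \NN$ be the infinite set of $D$ for which~\eqref{eq:32} holds.

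First, for each fixed $N \geq 1$ and each $D \in \mathcal{D}$ with $D \geq N$, I would apply the finite form of Chung--Erd\H{o}s (which is the form actually proved in~\cite{chungerdos} and invoked in the proof of \th\ref{prop:erdoschung}) to the sets $A_N, A_{N+1}, \dots, A_D$, obtaining
\begin{equation*}
  \mu\parens*{\bigcup_{k=N}^{D} A_k} \geq \frac{\parens*{\sum_{N \leq k \leq D}\mu(A_k)}^2}{\sum_{N \leq k,\ell \leq D}\mu(A_k \cap A_\ell)}.
\end{equation*}
The numerator equals $(S(D) - S(N-1))^2$. For the denominator, enlarging the index range and invoking~\eqref{eq:32} (which holds since $D \in \mathcal{D}$) gives the upper bound $C \, S(D)^2$. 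Therefore
\begin{equation*}
  \mu\parens*{\bigcup_{k=N}^{D} A_k} \geq \frac{(S(D) - S(N-1))^2}{C \, S(D)^2}.
\end{equation*}

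Next I would send $D \to \infty$ along $\mathcal{D}$. Since $S(N-1)$ is a fixed finite number and $S(D) \to \infty$, the right-hand side tends to $1/C$. The left-hand side is monotone in $D$ and bounded above by $\mu\parens*{\bigcup_{k \geq N} A_k}$, so
\begin{equation*}
  \mu\parens*{\bigcup_{k \geq N} A_k} \geq \frac{1}{C}.
\end{equation*}
Finally, since the sets $\bigcup_{k \geq N} A_k$ decrease to $\limsup_{k\to\infty} A_k$ as $N \to \infty$, continuity of measure from above (the measure space is finite) yields $\mu(\limsup_k A_k) \geq 1/C$, as required.

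There is no real obstacle here; the only subtlety worth flagging is that~\eqref{eq:32} is assumed only for infinitely many $D$, so one must be careful to take the limit along the subsequence $\mathcal{D}$ rather than over all $D$. The tail passage from $\bigcup_{k=N}^D A_k$ to $\limsup A_k$, rather than working directly with $\bigcup_{k=1}^D A_k$, is what upgrades the lower bound on the union to a lower bound on the limsup set.
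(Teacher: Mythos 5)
Your proof is correct. The paper does not supply its own proof of this proposition---the remark following it defers to~\cite[Lemma~2.3]{Harman} and~\cite[Chapter~1, Lemma~5]{Sprindzuk}---but your argument (the finite Chung--Erd\H{o}s inequality of \th\ref{prop:erdoschung} applied to the tail $A_N,\dots,A_D$, the passage $D\to\infty$ along the subsequence where~\eqref{eq:32} holds, and then $N\to\infty$ via continuity of measure from above) is exactly the standard proof found in those references, with the two subtleties that matter (limiting only along $\mathcal D$, and working with tails so that the union bound upgrades to a limsup bound) correctly handled.
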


\begin{remark*}
  Erd{\H o}s and Renyi proved this in~\cite{ErdosRenyiBC} with
  $C=1$. The proof of the more general statement above is found in many
  places, for example~\cite[Lemma~2.3]{Harman},~\cite[Chapter~1,
  Lemma~5]{Sprindzuk}.
\end{remark*}

The proofs of \th~\ref{thm:bootstrap,thm:bootstrap11} make direct use
of the following variant of \th~\ref{prop:erdosrenyi}.

\begin{proposition}\th\label{prop:qia}
Let $(X, \mu)$ be a probability space and $(A_{d,q})_{d,q\geq 1}$ a
collection of measurable sets such that for every $d\geq 1$,
\begin{equation}\label{eq:29}
  \sum_{q=1}^\infty \mu(A_{d,q}) < \infty \qquad\textrm{ while }\qquad
  \sum_{d=1}^\infty\sum_{q=1}^\infty \mu(A_{d,q}) =\infty. 
\end{equation}
If there exists a constant $C\geq 0$ such that for infinitely many
$D$, 
\begin{equation}\label{eq:30}
  \sum_{\substack{(k,q), (\ell,r) \\  k ,\ell \leq D}} \mu(A_{k,q}\cap A_{\ell,r})
  \leq C \parens*{\sum_{\substack{(d,q) \\ d \leq D}} \mu(A_{d,q})}^2
\end{equation}
then
\begin{equation*}
  \mu\parens*{\limsup_{d,q\to\infty} A_{d,q}} \geq \frac{1}{C}. 
\end{equation*}
\end{proposition}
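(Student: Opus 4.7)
The plan is to reduce \th\ref{prop:qia} to the finite Chung--Erd{\H o}s inequality (\th\ref{prop:erdoschung}) by applying it to finite truncations of the doubly-indexed family $(A_{d,q})$, then passing to limits in a tail fashion. Let $\mathcal{D}\subset\NN$ denote the infinite set of indices $D$ for which (\ref{eq:30}) holds, and abbreviate
\begin{equation*}
T_D:=\sum_{d\leq D}\sum_{q\geq 1}\mu(A_{d,q}),\qquad U_D:=\sum_{k,\ell\leq D}\sum_{q,r\geq 1}\mu(A_{k,q}\cap A_{\ell,r}).
\end{equation*}
For $D\in\mathcal{D}$ both are finite: $T_D<\infty$ by (\ref{eq:29}) and $U_D\leq CT_D^2<\infty$ by (\ref{eq:30}).

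The key step is a tail Chung--Erd{\H o}s bound. For $1\leq M\leq D$ and a cutoff $Q_0\geq 1$, enumerating the finite collection $\{A_{d,q}:M\leq d\leq D,\ q\leq Q_0\}$ and applying \th\ref{prop:erdoschung} gives
\begin{equation*}
\mu\!\left(\bigcup_{\substack{M\leq d\leq D\\ q\leq Q_0}} A_{d,q}\right)\geq \frac{\bigl(\sum_{M\leq d\leq D,\,q\leq Q_0}\mu(A_{d,q})\bigr)^{2}}{\sum_{M\leq k,\ell\leq D,\,q,r\leq Q_0}\mu(A_{k,q}\cap A_{\ell,r})}.
\end{equation*}
As $Q_0\to\infty$ the left-hand side increases by continuity of $\mu$ to $\mu\bigl(\bigcup_{M\leq d\leq D}\bigcup_{q} A_{d,q}\bigr)$; the numerator increases to the finite quantity $(T_D-T_{M-1})^2$; and the denominator increases to a sum bounded above by $U_D<\infty$. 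Passing to the limit and using (\ref{eq:30}) gives
\begin{equation*}
\mu\!\left(\bigcup_{M\leq d\leq D}\bigcup_{q\geq 1} A_{d,q}\right)\geq \frac{(T_D-T_{M-1})^2}{U_D}\geq \frac{(T_D-T_{M-1})^2}{C\,T_D^{2}}.
\end{equation*}

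Now let $D\to\infty$ along $\mathcal{D}$. By (\ref{eq:29}) we have $T_D\to\infty$, so the right-hand side tends to $1/C$ while the left-hand side increases (over all $D$, not just $D\in\mathcal{D}$) to $\mu\bigl(\bigcup_{d\geq M}\bigcup_{q\geq 1}A_{d,q}\bigr)$. Hence this tail union has measure at least $1/C$ for every $M\geq 1$. Letting $M\to\infty$ along the decreasing intersection and using downward continuity of measure yields
\begin{equation*}
\mu\!\left(\bigcap_{M\geq 1}\bigcup_{d\geq M}\bigcup_{q\geq 1}A_{d,q}\right)\geq\frac{1}{C}.
\end{equation*}
Any point in this intersection lies in some $A_{d_M,q_M}$ with $d_M\geq M$ for every $M$, hence in $A_{d,q}$ for infinitely many distinct pairs $(d,q)$; thus the intersection is contained in $\limsup_{d,q\to\infty}A_{d,q}$ and the conclusion follows.

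I do not expect a serious obstacle. The only delicate point is ensuring that the ratio in the Chung--Erd{\H o}s bound survives the passage $Q_0\to\infty$: this requires both $T_D<\infty$ and $U_D<\infty$ simultaneously, which is precisely what hypotheses (\ref{eq:29}) and (\ref{eq:30}) guarantee at every $D\in\mathcal{D}$. Everything else is standard measure-theoretic bookkeeping, and the tail restriction $d\geq M$ is what converts a bound on a single union into a bound on the limsup set.
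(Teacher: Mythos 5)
Your proof is correct, and it reaches the paper's conclusion with the same constant $1/C$, but by a genuinely different route. The paper's proof truncates each row at a finite $q_d$ chosen so that $\sum_{q\leq q_d}\mu(A_{d,q})\geq(1-\eps)\sum_{q\geq 1}\mu(A_{d,q})$, concatenates the resulting finite blocks into a single sequence $(A_k)$, invokes the Erd\H{o}s--R\'enyi divergence lemma (\th\ref{prop:erdosrenyi}) as a black box, and finally lets $\eps\to0$. You instead go back to the finite Chung--Erd\H{o}s inequality (\th\ref{prop:erdoschung}) applied to tail blocks $M\leq d\leq D$ and recover the limsup set by three successive limits ($Q_0\to\infty$, then $D\to\infty$ along the set where~(\ref{eq:30}) holds, then $M\to\infty$ via downward continuity of $\mu$); in effect you reprove \th\ref{prop:erdosrenyi} in the doubly-indexed setting rather than citing it. Your route avoids the $\eps$-loss and the truncation bookkeeping, at the cost of more limit-passing; both arguments rest on the same second-moment inequality, and both end with the same observation that a point lying in sets $A_{d,q}$ with $d$ arbitrarily large belongs to $\limsup_{d,q\to\infty}A_{d,q}$. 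Two points you glide over, neither of which is a real gap: the finite Chung--Erd\H{o}s inequality needs the truncated measure sum to be positive (otherwise the bound is trivially $0$ and there is nothing to prove), and when you send $Q_0\to\infty$ in the ratio you should note that the limiting denominator is positive whenever the limiting numerator is, since the diagonal terms $(k,q)=(\ell,r)$ already contribute the numerator's untruncated sum; with that, the ratio genuinely converges and the inequality survives the limit.
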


\begin{proof}
  Let $\eps>0$. For each $d\geq 1$, let $q_d\geq 1$ be such that
  \begin{equation}\label{eq:31}
    \sum_{q=1}^{q_d}\mu(A_{d,q}) \geq (1-\eps)    \sum_{q=1}^{\infty} \mu(A_{d,q}).
  \end{equation}
  Let $(A_k)$ be  the sequence
  \begin{equation*}
    \set*{\set{A_{d,q}}_{q=1}^{q_d}}_{d=1}^\infty = \set*{A_{1,1}, \dots, A_{1, q_1}, A_{2,1}, A_{2,2}, \dots, A_{2,q_2},
    A_{3,1}, \dots}
  \end{equation*}
  For each $D\geq 1$ for which~(\ref{eq:30}) holds let
  $K = q_1 + q_2 + \dots + q_D$, so that
  \begin{equation*}
    \sum_{k,\ell \leq K}\mu(A_k\cap A_\ell) \leq C
    \parens*{\sum_{\substack{(d,q) \\ d \leq D}} \mu(A_{d,q})}^2\overset{(\ref{eq:31})}{\leq}
    \frac{C}{(1-\eps)^2} \parens*{\sum_{k \leq K} \mu(A_k)}^2. 
  \end{equation*}
  By \th\ref{prop:erdosrenyi},
  \begin{equation*}
    \mu\parens*{\limsup_{k\to\infty} A_k} \geq \frac{(1-\eps)^2}{C},
  \end{equation*}
  therefore
  \begin{equation*}
    \mu\parens*{\limsup_{d,q\to\infty} A_{d,q}} \geq \frac{(1-\eps)^2}{C}. 
  \end{equation*}
  Since $\eps>0$ was arbitrary, the same holds with $\eps=0$.
\end{proof}

Beresnevich and Velani have recently proved a collection of results
establishing partial converses to \th~\ref{prop:erdosrenyi}. The
following theorem states that if a sequence of balls has a
positive-measure limsup set, then it must contain a quasi-independent
subsequence.

\begin{theorem}[Beresnevich--Velani,~\cite{BVBC}]\th\label{thm:BV}
  Let $(X, \calA, \mu, d)$ be a metric measure space equipped with a
  doubling Borel probability measure $\mu$. Let $\set{B_i}_{i\in\NN}$ be a
  sequence of balls in $X$ with $r(B_i)\to 0$ as $i\to \infty$ and such that
  \begin{equation*}
    \exists a, b > 1 \quad\textrm{ such that}\quad \mu(a B_i) \leq b\mu(B_i) \quad\textrm{for all $i$ sufficiently large.}
  \end{equation*}
  Then
  \begin{equation*}
    \mu(\limsup_{i\to\infty} B_i) > 0
  \end{equation*}
  if and only if there exists a subsequence $\set{L_i}_{i\in\NN}$ of
  $\set{B_i}_{i\in\NN}$ and a constant $C>0$ such that
  \begin{equation*}
    \sum_{i=1}^\infty \mu(L_i) = \infty
  \end{equation*}
  and for infinitely many $Q\in \NN$
  \begin{equation*}
    \sum_{s,t = 1}^Q\mu(L_s\cap L_t) \leq C \parens*{\sum_{s=1}^Q \mu(L_s)}^2.
  \end{equation*}
  In the case where they both hold, one may take
  $C = K \mu(\limsup_{i\to\infty} B_i)^{-2}$, where $K>0$ is a
  constant depending only on $a,b$, and the doubling constant of the
  measure $\mu$.
\end{theorem}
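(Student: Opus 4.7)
The backward implication (quasi-independent subsequence with divergent measure-sum $\Longrightarrow$ positive measure) is immediate from \th\ref{prop:erdosrenyi}: if the subsequence $\{L_i\}$ satisfies the stated overlap bound with constant $C$, then Erd{\H o}s--R{\'e}nyi yields $\mu(\limsup_i L_i) \geq 1/C$, and since $\limsup_i L_i \subset \limsup_i B_i$, the latter has measure at least $1/C > 0$. The shape $C = K \mu(\limsup_i B_i)^{-2}$ is self-consistent provided $K \geq 1$ is chosen sufficiently large in terms of $a$, $b$, and the doubling constant.

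The forward implication is the substantive one. Write $v := \mu(\limsup_i B_i) > 0$ and $W := \limsup_i B_i$. My plan is to extract the desired subsequence $\{L_i\} \subset \{B_i\}$ via a greedy procedure driven by local density information on $W$. The doubling hypothesis on $\mu$ makes Lebesgue's differentiation theorem available, so $\mu$-almost every point of $W$ is a density point. Together with the regularity $\mu(a B_i) \leq b \mu(B_i)$, this lets me identify, for any small $\eps > 0$, infinitely many indices $i$ for which $B_i$ is \emph{efficient} in the sense that $\mu(B_i \cap W) \geq (1-\eps)\mu(B_i)$ and its slight enlargement does not inflate its measure. Only efficient balls will be considered as candidates for $L_i$.

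I then construct $L_1, L_2, \ldots$ inductively. At stage $k+1$, pick $L_{k+1} = B_{i_{k+1}}$ with $i_{k+1} > i_k$ minimal such that $B_{i_{k+1}}$ is efficient and has suitably controlled intersection with $\bigcup_{j\leq k} L_j$, the latter being arranged via a Vitali-- or Besicovitch-type selection adapted to the doubling measure. The divergence $\sum_i \mu(L_i) = \infty$ is then forced: otherwise the $L_j$'s already chosen would leave only a null-set portion of $W$ uncovered up to tolerance $\eps$, contradicting the infinite-occurrence nature of $W = \limsup B_i$ for small $\eps$. The overlap bound is obtained by writing, for fixed $Q$,
\begin{equation*}
\parens*{\sum_{s \leq Q} \mu(L_s)}^2 = \parens*{\int_\Omega \sum_{s \leq Q} \bone_{L_s}\, d\mu}^2 \geq \parens*{\int_W \sum_{s \leq Q} \bone_{L_s}\, d\mu}^2 / \mu(W)^{-1}\cdot\mu(W),
\end{equation*}
and applying Cauchy--Schwarz in the form $(\int f \bone_W)^2 \leq \mu(W) \int f^2$ to the counting function $f = \sum_{s \leq Q} \bone_{L_s}$, localized to $W$ using efficiency to replace $\bone_{L_s}$ by $\bone_{L_s \cap W}$ at the cost of a factor $(1-\eps)^{-1}$. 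This converts the inequality into $\sum_{s,t \leq Q} \mu(L_s \cap L_t) \leq v^{-1}(1-\eps)^{-2}(\sum \mu(L_s))^2$ \emph{on average over $Q$}, and a further doubling-based packing argument upgrades the $v^{-1}$ factor to $v^{-2}$ by accounting for pairs of selected balls that meet at comparable scales.

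The principal obstacle is securing the sharp $v^{-2}$ dependence in the constant $C = K v^{-2}$, rather than a weaker $v^{-1}$ bound. A naive greedy or Vitali extraction delivers the qualitative existence of a quasi-independent subsequence, but loses the correct scaling in $v$. Resolving this requires localizing the entire analysis to density neighborhoods of $W$ at a scale calibrated to $v$ and tracking how the doubling constant and the regularity constant $b$ jointly control overlap multiplicities when balls are enlarged or truncated; this quantitative refinement of classical covering arguments is the crux of the Beresnevich--Velani result.
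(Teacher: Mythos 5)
The paper does not actually prove this statement: as the remark following it indicates, it is imported verbatim from Beresnevich--Velani (\cite[Theorem~3]{BVBC} for the quasi-independence and \cite[Proposition~2]{BVBC} for the form of the constant). So there is no internal argument to compare against, and your proposal must be judged on its own. The backward implication via \th\ref{prop:erdosrenyi} is correct. The forward implication, however, contains a gap that is not merely technical.

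The central problem is that your Cauchy--Schwarz step points in the wrong direction. Applied to $f=\sum_{s\leq Q}\bone_{L_s}$, Cauchy--Schwarz gives
\begin{equation*}
  \parens*{\int_W f\,d\mu}^2 \;\leq\; \mu(W)\int f^2\,d\mu \;=\; \mu(W)\sum_{s,t\leq Q}\mu(L_s\cap L_t),
\end{equation*}
which is a \emph{lower} bound on the overlap sum in terms of $\parens*{\sum_s\mu(L_s)}^2$ (this is exactly why $C$ cannot be taken smaller than about $\mu(W)^{-1}$). The inequality you need is an \emph{upper} bound on $\sum_{s,t}\mu(L_s\cap L_t)$, and no localization to density points, efficiency condition, or averaging over $Q$ converts one into the other; the displayed manipulation with the factors $\mu(W)^{-1}\cdot\mu(W)$ does not parse, and the claimed conclusion $\sum_{s,t}\mu(L_s\cap L_t)\leq v^{-1}(1-\eps)^{-2}\parens*{\sum\mu(L_s)}^2$ simply does not follow. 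What actually makes the upper bound work in \cite{BVBC} is disjointness, not density: for each $N$ the tail $\bigcup_{i\geq N}B_i$ contains $\limsup_i B_i$ and hence has measure at least $v$, so one can choose a finite subfamily of union-measure $\geq v/2$ and then run a Vitali-type ($5r$-covering) selection---this is where the doubling of $\mu$ and the hypothesis $\mu(aB_i)\leq b\mu(B_i)$ enter---to extract a pairwise \emph{disjoint} subfamily of total measure $\geq cv$. Concatenating infinitely many such disjoint blocks at shrinking scales yields the subsequence: within a block the counting function is an indicator, so within-block overlaps contribute only the diagonal, cross-block overlaps between blocks $j$ and $k$ are bounded by $\mu(U_j\cap U_k)\leq 1$, and $\parens*{\sum_{s\leq Q}\mu(L_s)}^2\geq(\#\textrm{blocks}\cdot cv)^2$, which is precisely how the constant $Kv^{-2}$ arises (and also why $\sum_i\mu(L_i)$ diverges). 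Your sketch gestures at Vitali selection but never isolates disjointness as the mechanism controlling the overlap sum, and the step you designate as carrying the quantitative weight is invalid.
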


\begin{remark*}
  This statement is a combination of~\cite[Theorem~3]{BVBC}, where the
  quasi-independence is asserted, and~\cite[Proposition~2]{BVBC},
  where the constant $C>0$ is specified. It is especially important to
  the proof of \th\ref{thm:bootstrap11} that the constant $C>0$ is understood.
\end{remark*}

The next lemma was proved by Cassels~\cite{Casselslemma} in the course
of proving his zero-one law for sets of the form
$W_{1,m}^\ZZ(\Psi)$. The lemma says that the measure of the limsup set
of a sequence of balls is not altered if the balls are uniformly
scaled. Cassels' lemma has enjoyed frequent use in metric Diophantine
approximation. Its role in~\S\ref{sec:1a} and~\S\ref{sec:1b} is in
allowing us to scale the function $\psi$ (and, correspondingly,
$\Psi$) until the balls making up certain approximation sets, to be
discussed in~\S\ref{sec:prop-appr-sets}, are disjoint.

\begin{lemma}[Cassels' lemma,~\cite{Casselslemma}]\th\label{lem:cassels}
  For each $k$, let $B_k$ be a ball in the torus $\TT^d$ having radius
  $c\psi_k$ where $\psi_k\geq 0$, $\psi_k\to 0\, (k\to\infty)$, and
  $c>0$ is a constant. Then $\Leb(\limsup_{k\to\infty} B_k)$ is
  independent of the value of the constant $c>0$.
\end{lemma}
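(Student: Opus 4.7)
The plan is to prove that $\Leb(W(c_1)) = \Leb(W(c_2))$ for any $0 < c_1 < c_2$, where $W(c) := \limsup_{k \to \infty} B(x_k, c\psi_k)$ and $x_k$ denotes the center of $B_k$. Since $W(c_1) \subset W(c_2)$, it suffices to show that $\Leb(W(c_2) \setminus W(c_1)) = 0$. Writing $W(c_1) = \bigcap_{N \geq 1} \bigcup_{k \geq N} B(x_k, c_1 \psi_k)$, the difference decomposes as the increasing countable union $W(c_2) \setminus W(c_1) = \bigcup_{N \geq 1} E_N$, where $E_N := W(c_2) \setminus \bigcup_{k \geq N} B(x_k, c_1 \psi_k)$. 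If $\Leb(W(c_2) \setminus W(c_1)) > 0$, then $\Leb(E_{N_0}) > 0$ for some $N_0 \geq 1$, and I plan to derive a contradiction from a Lebesgue density argument applied at a density point of $E_{N_0}$.

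Let $x$ be such a density point of $E_{N_0}$. The first observation is that density at $x$ transfers to any off-center ball containing $x$ whose radius tends to zero: if $B(y, s) \ni x$ then $|y - x| \leq s$, so $B(y, s) \subset B(x, 2s)$, giving
\begin{equation*}
\frac{\Leb(B(y, s) \setminus E_{N_0})}{\Leb(B(y, s))} \leq 2^d \cdot \frac{\Leb(B(x, 2s) \setminus E_{N_0})}{\Leb(B(x, 2s))} \to 0 \quad (s \to 0),
\end{equation*}
using that Lebesgue measure of balls in $\TT^d$ scales like $s^d$. Since $x \in W(c_2)$ and $\psi_k \to 0$, there is a subsequence $k_j \to \infty$ with $k_j \geq N_0$, $x \in B(x_{k_j}, c_2 \psi_{k_j})$, and $c_2 \psi_{k_j} \to 0$. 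Applying the display above with $(y, s) = (x_{k_j}, c_2 \psi_{k_j})$ gives $\Leb(B(x_{k_j}, c_2 \psi_{k_j}) \setminus E_{N_0}) = o(\Leb(B(x_{k_j}, c_2 \psi_{k_j})))$ as $j \to \infty$.

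On the other hand, each $k_j \geq N_0$ forces $B(x_{k_j}, c_1 \psi_{k_j}) \subset \bigcup_{k \geq N_0} B(x_k, c_1 \psi_k)$, so by the very definition of $E_{N_0}$ the ball $B(x_{k_j}, c_1 \psi_{k_j})$ is disjoint from $E_{N_0}$. Combined with $B(x_{k_j}, c_1 \psi_{k_j}) \subset B(x_{k_j}, c_2 \psi_{k_j})$, this yields
\begin{equation*}
\Leb(B(x_{k_j}, c_1 \psi_{k_j})) \leq \Leb(B(x_{k_j}, c_2 \psi_{k_j}) \setminus E_{N_0}) = o(\Leb(B(x_{k_j}, c_2 \psi_{k_j}))) \quad (j \to \infty).
\end{equation*}
But the left-hand ratio equals the positive constant $(c_1/c_2)^d$ for every $j$, contradicting the right-hand $o(1)$ and closing the argument.

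The main obstacle is locating the correct decomposition $W(c_2) \setminus W(c_1) = \bigcup_N E_N$ and recognizing that the Lebesgue density property at a single point transfers to arbitrary off-center balls containing that point; once these are secured, the contradiction arises directly from the volume ratio $(c_1/c_2)^d$ being a fixed positive constant, while the same ratio is forced to vanish along the subsequence by the density estimate.
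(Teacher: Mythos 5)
Your argument is correct. Note that the paper itself gives no proof of this lemma---it is quoted directly from Cassels' 1950 paper---so there is no in-text argument to compare against; your density-point proof is the standard modern route (Cassels' original argument is more combinatorial) and every step checks out. The decomposition $W(c_2)\setminus W(c_1)=\bigcup_N E_N$ is valid, the transfer of density from centered to off-center balls containing the density point is the right tool, and the contradiction between the fixed ratio $(c_1/c_2)^d$ and the $o(1)$ bound is sound. One point worth making explicit, though your setup already handles it: the ratio argument needs $\psi_{k_j}>0$, and this is automatic because $x\in E_{N_0}$ means $x\notin B(x_k,c_1\psi_k)$ for all $k\geq N_0$, so any $k_j\geq N_0$ with $x\in B(x_{k_j},c_2\psi_{k_j})$ must have a nondegenerate ball.
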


A well-known consequence of the Lebesgue density theorem asserts that
if a set occupies a fixed positive proportion of every ball, then it
must have full measure. In the next lemma of Beresnevich, Dickinson,
and Velani, the requirement of a fixed positive proportion is
relaxed. The lemma plays a crucial role in the proof of
\th\ref{thm:bootstrap11}; it allows us to prove full measure of
$W_{n,m}^P(\psi)$ after having established positive measure on small
scales.

\begin{lemma}[Beresnevich--Dickinson--Velani,~{\cite[Lemma 6]{BDV}}]\th\label{lem:lebesguedensity}
  Let $(X,d)$ be a metric space with a finite measure $\mu$ such that
  every open set is $\mu$-measurable. Let $A$ be a Borel subset of $X$
  and let $f:\Rpos\to\Rpos$ be an increasing function with $f(x)\to 0$ as
  $x\to 0$. If for every open set $U\subset X$ we have
  \begin{equation*}
    \mu(A\cap U) \geq f(\mu(U)),
  \end{equation*}
  then $\mu(A) = \mu(X)$.
\end{lemma}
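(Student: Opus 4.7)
The plan is to argue by contradiction using outer regularity of $\mu$. Suppose $\mu(A) < \mu(X)$ and put $B := X\setminus A$, so that $\mu(B) > 0$. The goal is to produce an open set $U\supset B$ so tight around $B$ that $\mu(A\cap U)$ is forced below $f(\mu(U))$, contradicting the hypothesis.

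First I would establish the outer regularity
\begin{equation*}
  \mu(B) = \inf\set{\mu(U) : B\subset U, \, U \textrm{ open}}.
\end{equation*}
Since every open set is assumed $\mu$-measurable and $\mu$-measurable sets form a $\sigma$-algebra, every Borel set is $\mu$-measurable, so $B$ is. Finiteness of $\mu$ on a metric space then yields outer regularity by a standard monotone-class argument: the family of Borel sets satisfying both inner regularity by closed sets and outer regularity by open sets is a $\sigma$-algebra containing the closed sets (each closed set in a metric space is a countable intersection of its open $1/n$-neighborhoods), hence all Borel sets.

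Given $\eps>0$, pick an open $U_\eps \supset B$ with $\mu(U_\eps)\leq \mu(B)+\eps$. Since $B\subset U_\eps$,
\begin{equation*}
  \mu(A\cap U_\eps) = \mu(U_\eps) - \mu(B) \leq \eps,
\end{equation*}
while the hypothesis together with monotonicity of $f$ gives
\begin{equation*}
  \mu(A\cap U_\eps) \geq f(\mu(U_\eps)) \geq f(\mu(B)).
\end{equation*}
Combining the two yields $f(\mu(B)) \leq \eps$ for every $\eps > 0$. The lemma is intended in the regime where $f$ is strictly positive on positive inputs (otherwise the hypothesis is vacuous for small $U$), so this contradicts $\mu(B) > 0$, completing the argument.

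I do not anticipate any serious obstacle; the only non-trivial input is the outer regularity of finite Borel measures on a metric space, which is classical. Everything else is a one-line computation expressing the heuristic that if $A$ misses a set of positive measure, then it must also miss a slightly larger open neighborhood of that set, which is quantitatively incompatible with the lower bound $\mu(A\cap U)\geq f(\mu(U))$.
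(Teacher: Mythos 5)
Your argument is correct, and it is in fact the standard proof of this lemma: the paper itself cites it from Beresnevich--Dickinson--Velani without reproducing a proof, and the original proof in that reference proceeds exactly as you do --- by contradiction, using the outer regularity of a finite Borel measure on a metric space to squeeze an open set $U_\eps$ around $X\setminus A$ and playing $\mu(A\cap U_\eps)\leq\eps$ against $\mu(A\cap U_\eps)\geq f(\mu(X\setminus A))$. Your parenthetical caveat is also the right one: the conclusion genuinely requires $f(x)>0$ for $x>0$ (which holds once ``increasing'' is read as strictly increasing, as intended), since otherwise the hypothesis can be satisfied nonvacuously while $A$ fails to have full measure.
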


The next is Vitali's covering lemma. There are several versions, all
of which have a similar flavor: a collection of balls can be refined
to a subcollection of disjoint balls without losing too much measure
from the original collection. We state it for finitely many balls in
the torus. The lemma is used in the proofs of parts (b) of
\th~\ref{thm:bootstrap,thm:bootstrap11} in~\S\ref{sec:1b}
and~\S\ref{sec:2b}.

\begin{lemma}[Vitali's covering lemma]\th\label{lem:vitali}
  For every finite collection $B_1, \dots, B_k$ of balls in the torus
  $\TT^d$, there exists a subcollection $\tilde B_1, \dots, \tilde
  B_\ell$ such that
  \begin{equation*}
    \bigcup_{i=1}^k B_i \subset     \bigcup_{j=1}^\ell 3\bullet\tilde B_j,
  \end{equation*}
  (where $3\bullet B$ denotes the threefold concentric dilation of a
  ball $B$) and such that $\tilde B_i \cap \tilde B_j = \emptyset$ for all
  $i\neq j$. 
\end{lemma}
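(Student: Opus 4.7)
The plan is to use the standard greedy selection. First I would reorder the balls so that their radii are nonincreasing: $r(B_1) \geq r(B_2) \geq \cdots \geq r(B_k)$. Then I would construct $\tilde B_1, \dots, \tilde B_\ell$ inductively. Set $\tilde B_1 = B_1$. Having chosen $\tilde B_1, \dots, \tilde B_j$, let $\tilde B_{j+1}$ be the ball of smallest index among $B_1, \dots, B_k$ that is disjoint from $\tilde B_1 \cup \cdots \cup \tilde B_j$, if such a ball exists; otherwise terminate. Since there are finitely many balls, the procedure stops after $\ell \leq k$ steps. The pairwise disjointness of the $\tilde B_j$ is built into the selection rule.

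To verify the covering property, fix any $B_i$. If $B_i$ was selected, it is trivially contained in $3\bullet B_i$. Otherwise the greedy rule rejected $B_i$ at some stage, meaning $B_i$ intersects some $\tilde B_j$ whose original index is at most $i$; in particular $r(\tilde B_j) \geq r(B_i)$. Writing $c(B)$ for the center of a ball $B$, the triangle inequality in the max norm on $\TT^d$ gives, for any $x \in B_i$,
\begin{equation*}
\abs{x - c(\tilde B_j)} \leq \abs{x - c(B_i)} + \abs{c(B_i) - c(\tilde B_j)} \leq r(B_i) + \bigl(r(B_i) + r(\tilde B_j)\bigr) \leq 3\, r(\tilde B_j),
\end{equation*}
where the middle inequality uses that $B_i \cap \tilde B_j \neq \emptyset$. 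Hence $B_i \subset 3\bullet \tilde B_j$, which is exactly what is needed.

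There is no real obstacle here; the argument is entirely routine. The only minor point worth noting is that we are working on the torus rather than in $\RR^d$, but since the quotient max norm on $\TT^d$ still satisfies the triangle inequality, the chain of inequalities above is unchanged. (If $3\bullet \tilde B_j$ happens to wrap around and cover all of $\TT^d$, the containment is still valid.) This concludes the plan.
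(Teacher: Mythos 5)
The paper states this lemma without proof, treating it as a standard fact, and your greedy selection by decreasing radius is precisely the standard argument; it is correct, including the observation that the quotient max norm on $\TT^d$ still satisfies the triangle inequality so the $3r$ bound survives. The only step stated rather than justified is that a rejected $B_i$ must meet a selected ball of original index less than $i$ (hence of radius at least $r(B_i)$), but this follows immediately from the selection rule and is routine.
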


The next lemma is useful for understanding the behavior under scaling
of intersections of the approximation sets to be discussed
in~\S\ref{sec:prop-appr-sets}. In~\S\ref{sec:1a} it facilitates a
comparison between certain pairwise intersections arising in the
$n$-by-$m$ setting and the pairwise intersections in the $1$-by-$m$
setting.

\begin{lemma}[{\cite[Lemma~4]{ramirez2023duffinschaeffer}}]\th\label{lem:dilation}
  Suppose $I_1, I_2,\dots, I_r \subset \TT^m$ are disjoint balls and
  $J_1, J_2,\dots, J_s\subset \TT^m$ are disjoint balls. Then for any
  $0 < \Sigma \leq 1$,
  \begin{equation*}
    \Leb\parens*{\bigcup_{i=1}^r\Sigma \bullet I_i \cap \bigcup_{j=1}^s\Sigma\bullet J_j} \leq \Sigma^m \Leb\parens*{\bigcup_{i=1}^r I_i\cap\bigcup_{j=1}^s J_j},
  \end{equation*}
  where $\Sigma \bullet I_i$ denotes the concentric contraction of the
  ball $I_i$ by $\Sigma$, and similar for $\Sigma \bullet J_j$.
\end{lemma}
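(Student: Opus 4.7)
The plan is to reduce the inequality, via two successive factorizations, to a one-dimensional claim about overlapping intervals. First I would observe that since the balls $I_i$ are disjoint, so are the contractions $\Sigma \bullet I_i$, and similarly for the $J_j$. Consequently, if $(i,j) \neq (i', j')$ then $(\Sigma \bullet I_i) \cap (\Sigma \bullet J_j)$ and $(\Sigma \bullet I_{i'}) \cap (\Sigma \bullet J_{j'})$ are disjoint: they differ in at least one index, and the corresponding disjoint family forces emptiness. The same reasoning applies to the uncontracted sets $I_i \cap J_j$. Therefore both sides of the lemma split as sums over $(i,j)$, and it suffices to prove the pairwise statement
\begin{equation*}
  \Leb\parens*{\Sigma \bullet I \cap \Sigma \bullet J} \leq \Sigma^m \Leb(I \cap J)
\end{equation*}
for any two balls $I, J \subset \TT^m$.

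Next I would exploit the paper's convention that $\abs{\cdot}$ denotes the maximum norm, so balls in $\TT^m$ are in fact boxes. Writing $I = \prod_k I^{(k)}$ and $J = \prod_k J^{(k)}$ as products of one-dimensional intervals, the intersection factors coordinatewise, as does the contracted intersection since $\Sigma \bullet I = \prod_k \Sigma \bullet I^{(k)}$. Hence the pairwise statement reduces to a coordinate-by-coordinate inequality
\begin{equation*}
  \ell\parens*{\Sigma \bullet I^{(k)} \cap \Sigma \bullet J^{(k)}} \leq \Sigma \cdot \ell\parens*{I^{(k)} \cap J^{(k)}},
\end{equation*}
where $\ell$ denotes one-dimensional Lebesgue length; taking the product over the $m$ coordinates then yields the factor $\Sigma^m$.

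The main obstacle, mild as it is, is the case analysis for that one-dimensional inequality. Let $d_k$ denote the distance between the centers of $I^{(k)}$ and $J^{(k)}$, and let $r_I, r_J$ be the half-widths prior to contraction. One splits into three regimes based on the position of $d_k$ relative to $\abs{r_I - r_J}$ and $r_I + r_J$: disjointness (both sides vanish), containment (both sides scale by $\Sigma$ exactly, yielding equality), and partial overlap. The partial-overlap regime is the only one where strictness can occur, because the contracted intervals may have become disjoint even when the originals still overlap; in any sub-case where they still meet, the inequality reduces to $\Sigma(r_I + r_J) - d_k \leq \Sigma(r_I + r_J - d_k)$, which is equivalent to $d_k \geq \Sigma d_k$, precisely the content of the hypothesis $\Sigma \leq 1$. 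Collecting the cases completes the coordinate-wise claim, and multiplying across the $m$ coordinates and then summing over the pairs $(i,j)$ delivers the lemma.
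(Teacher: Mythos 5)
Your proof is correct, and it shares its first step with the paper's proof --- disjointness of the two families lets both sides split as sums of $\Leb(I_i\cap J_j)$ over pairs $(i,j)$, reducing everything to a single pair of balls --- but the two arguments diverge in how they handle that pairwise inequality. The paper compares the contracted configuration $\Sigma\bullet I_i,\ \Sigma\bullet J_j$ in $\TT^m$ against the same balls viewed in a copy of $\TT^m$ whose metric has been rescaled by $\Sigma$: in the rescaled space the centers are at distance $\Sigma d$ rather than $d$, so the contracted balls (same radii, centers farther apart) intersect in no more measure, and the rescaled intersection is exactly $\Sigma^m\Leb(I_i\cap J_j)$. You instead unpack the same phenomenon coordinatewise, using the paper's max-norm convention to write each ball as a product of intervals and verifying a one-dimensional overlap inequality by case analysis; the decisive sub-case $\Sigma(r_I+r_J)-d\leq\Sigma(r_I+r_J-d)\iff\Sigma d\leq d$ is precisely the "centers don't move closer under concentric contraction" observation in explicit form. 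Your route is more computational and leans on the box structure of max-norm balls, while the paper's rescaling trick is shorter and does not need the product decomposition; both are valid. One small imprecision: in your "containment" regime ($d\leq\abs{r_I-r_J}$) the contracted intervals need not still be nested --- if $\Sigma\abs{r_I-r_J}<d$ they only partially overlap or are disjoint --- so that regime does not always give exact equality; but in every such sub-case the contracted overlap is bounded by $\Sigma(r_I+r_J)-d\leq\Sigma\min(2r_I,2r_J)$ using $\Sigma\abs{r_I-r_J}<d$, so the inequality survives and the argument is sound.
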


\begin{proof}
  The disjointness assumptions implies
  \begin{equation}\label{eq:45}
    \Leb\parens*{\bigcup_{i=1}^r I_i\cap\bigcup_{j=1}^s J_j} = \sum_{i,j}\Leb\parens*{I_i\cap J_j}
  \end{equation}
  and
  \begin{equation}\label{eq:46}
    \Leb\parens*{\bigcup_{i=1}^r \Sigma\bullet I_i\cap\bigcup_{j=1}^s \Sigma\bullet J_j} = \sum_{i,j}\Leb\parens*{\Sigma\bullet I_i\cap\Sigma\bullet J_j}.
  \end{equation}
  Let $1\leq i \leq r$ and $1\leq j \leq s$ and denote by $\bar I_i$
  and $\bar J_j$ the images of $I_i$ and $J_j$ under a scaling of the
  metric in $\TT^m$ by $\Sigma$, and let
  $\parens*{\bar\TT^m, \bar\Leb}$ denote the accordingly scaled
  measure space, namely, $\bar\TT^m = \TT^m$ as a set and
  $\bar\Leb = \Sigma^m\Leb$. Then we have
  \begin{equation*}
    \bar\Leb(\bar I_i) = \Leb(\Sigma\bullet I_i) = \Sigma^m \Leb(I_i) \quad\textrm{and}\quad\bar\Leb(\bar J_j) = \Leb(\Sigma\bullet J_j) = \Sigma^m\Leb(J_j).
  \end{equation*}
  Note that the centers of $\Sigma\bullet I_i$ and $\Sigma\bullet J_j$
  in $\TT^m$ are farther apart than the centers of $\bar I_i$ and
  $\bar J_j$ in $\bar\TT^m$. Therefore, 
  \begin{equation*}
    \Leb\parens*{\Sigma\bullet I_i\cap\Sigma\bullet J_j} \leq \bar\Leb\parens*{\bar I_i\cap\bar J_j} = \Sigma^m \Leb(I_i\cap J_j).
  \end{equation*}
  Putting this into~(\ref{eq:45}) and~(\ref{eq:46}) proves the lemma.  
\end{proof}

\section{Properties of approximation sets}
\label{sec:prop-appr-sets}

\th\ref{thm:bootstrap,thm:bootstrap11} concern limsups sets of the
form
\begin{equation*}
  W_{n,m}^P(\psi) = \limsup_{\abs{\q}\to\infty} A_{n,m}^P(\q, \psi(\q)),
\end{equation*}
where, for $\q\in\Zpos^n$, and $r \geq 0$,
\begin{equation*}
  A_{n,m}^P(\q, r) = \set*{\X \in \TT^{nm} : \q\X \in B(r) + P(\q)}.
\end{equation*}
For the proofs, it is essential that we understand the measures of the
sets $A_{n,m}^P(\q,\psi(\q))$ and their pairwise overlaps. This
section is a collection of lemmas about 
$A_{n,m}^P(\q,\psi(\q))$ and $A_{1,m}^P(d,\Psi(d))$.

The following lemma concerns the measures of
$A_{n,m}^P(\q,\psi(\q))$. Essentially, if $\psi(\q)$ is not too large,
then $A_{n,m}^P(\q,\psi(\q))$ can be viewed as an array of disjoint
parallelepipeds whose bases form a union of disjoint balls in
$\TT^{m}$.

\begin{lemma}\th\label{lem:measures}
  For each $d\geq 1$, let $P(d) \subset (\RR/d\ZZ)^m$ be a finite set,
  and for each $\q\in\Zpos^n$ let $P(\q)$ denote the lift to $\RR^m$
  of $P(\gcd(\q))$. If
  \begin{equation}\label{eq:11}
    r \leq \frac{1}{2}\min\set*{\abs{\p_1-\p_2} : \p_1,\p_2\in P(\q), \p_1\neq \p_2},
  \end{equation}
  then
  \begin{equation}\label{eq:6}
    \Leb\parens*{A_{n,m}^P(\q, r)} = \# P(\gcd(\q)) \parens*{\frac{2r}{\gcd(\q)}}^m. 
  \end{equation}
  In particular, if $P$ is relatively well-spread as in Definition~\ref{def:well} and
  \begin{equation}\label{eq:8}
    r \leq \frac{ad}{2\#(P(d))^{1/m}},
  \end{equation}
  where $a$ is as in~\eqref{eq:well-spread}, then
  \begin{equation}\label{eq:15}
    \Leb\parens*{A_{n,m}^{P'}(\q, r)} = \# P'(\gcd(\q)) \parens*{\frac{2r}{\gcd(\q)}}^m,
  \end{equation}
  hence
  \begin{equation}\label{eq:7}
    \Leb\parens*{A_{n,m}^{P'}(\q, r)} \asymp \# P(\gcd(\q)) \parens*{\frac{2r}{\gcd(\q)}}^m. 
  \end{equation}
\end{lemma}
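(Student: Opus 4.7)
The plan is to realize $A_{n,m}^P(\q,r)$ as the preimage under a suitable column-wise linear map, then compute the measure of a disjoint union of cubes in $(\RR/d\ZZ)^m$, where $d=\gcd(\q)$. Define $T_\q:\TT^{nm}\to (\RR/d\ZZ)^m$ by $\X\mapsto \q\X\bmod d\ZZ^m$. Since $\gcd(\q)=d$, the map $\x\mapsto\q\cdot\x\bmod d$ is a well-defined surjective continuous group homomorphism $\TT^n\to\RR/d\ZZ$; applied column by column and using uniqueness of Haar measure, $T_\q$ pushes $\Leb_{\TT^{nm}}$ forward to normalized Haar measure on $(\RR/d\ZZ)^m$, which equals $d^{-m}\Leb_{(\RR/d\ZZ)^m}$. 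Because $P(\q)$ is the $d\ZZ^m$-lift of $P(d)$, the condition $\q\X\in B(r)+P(\q)$ is $d\ZZ^m$-periodic in $\q\X$, so
\begin{equation*}
A_{n,m}^P(\q,r)=T_\q^{-1}\bigl(P(d)+B(r)\bigr),
\end{equation*}
the preimage being taken with $P(d)+B(r)$ viewed inside $(\RR/d\ZZ)^m$.

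Next I would use the hypothesis~\eqref{eq:11} to argue that $P(d)+B(r)$ is a disjoint union of $\#P(d)$ translates of $B(r)$ inside $(\RR/d\ZZ)^m$. Non-overlap of the cubes $B(\p,r)$ around distinct lifts in $\RR^m$ is exactly the assumption, and specializing $\p_2=\p_1+d e_i$ for a basis vector $e_i$ yields $2r\leq d$, so no individual cube wraps around the fundamental domain. Each such cube has Lebesgue volume $(2r)^m$, so $\Leb_{(\RR/d\ZZ)^m}\bigl(P(d)+B(r)\bigr)=\#P(d)\,(2r)^m$, and the pushforward identity delivers~\eqref{eq:6}.

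For the relatively well-spread case, the well-spread bound $|\p_1-\p_2|\geq ad/\#P(d)^{1/m}$ on the refinement $P'(\q)$ combines with the hypothesis $r\leq ad/(2\#P(d)^{1/m})$ to verify~\eqref{eq:11} with $P'$ in place of $P$. Hence~\eqref{eq:15} follows by applying the first part to $P'$, and the asymptotic~\eqref{eq:7} is then immediate from the sandwich $c\,\#P(d)\leq\#P'(d)\leq\#P(d)$ guaranteed by \th\ref{def:well}. The only step requiring genuine care is the pushforward computation for $T_\q$, which is standard once one observes that $\q$ sends $\ZZ^n$ surjectively onto $d\ZZ$; everything else reduces to bookkeeping.
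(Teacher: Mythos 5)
Your proof is correct and is essentially the paper's argument: the paper uses the measure-preserving map $\X\mapsto \q'\X \pmod 1$ with $\q'=\q/\gcd(\q)$ and reduces to the disjoint union of balls $A_{1,m}^P(d,r)\subset\TT^m$ of radius $r/d$, which is just your map $T_\q$ and target $(\RR/d\ZZ)^m$ rescaled by $d$. Your observation that~\eqref{eq:11} applied to lattice-translate pairs forces $2r\leq d$ (so no ball wraps around) is a nice explicit touch that the paper leaves implicit.
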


\begin{proof}
  Denote $d = \gcd(\q)$ and let $\q'\in\Zpos^n$ be the unique
  primitive vector such that $\q = d\q'$. The mapping
  $T: \TT^{nm}\to \TT^m$ defined by $\X \mapsto \q'\X \pmod 1$ is
  Lebesgue measure-preserving, in the sense that for every measurable
  set $A\subset\TT^m$,
  \begin{equation}\label{eq:5}
    \Leb\parens*{T^{-1}(A)} = \Leb\parens*{A},
  \end{equation}
  where $\Leb$ is to be understood as $nm$-dimensional Lebesgue
  measure on the left-hand side, and $m$-dimensional Lebesgue measure
  on the right-hand side.

  Now, notice that
  \begin{equation}\label{eq:4}
    A_{n,m}^P(\q, r) = T^{-1}\parens*{A_{1,m}^P(d, r)}
  \end{equation}
  hence it suffices to compute the measure of $A_{1,m}^P(d, r)$. This
  set is a union of $\#P(d)$ balls in $\TT^m$ having radius $r/d$ and
  centers in the set
  \begin{equation*}
    \set*{\frac{\p}{d} : \p\in P(\gcd(\q))} + \ZZ^m \subset \TT^m. 
  \end{equation*}
  If~(\ref{eq:11}) holds, then those balls are disjoint, therefore
  \begin{equation*}
    \Leb\parens*{A_{1,m}^P(d, r)} = \#P(d) \parens*{\frac{2r}{d}}^m,
  \end{equation*}
  and~\eqref{eq:6} is proved after taking~\eqref{eq:5}
  and~\eqref{eq:4} into account.

  Suppose $P$ is relatively well-spread and~\eqref{eq:8} holds. Let
  $P'(d) \subset P(d)$ be as in Definition~\ref{def:well}. Now
  $A_{1,m}^{P'}(d, r)$ is a union of disjoint balls, and so
  \begin{equation*}
    \Leb\parens*{A_{1,m}^{P'}(d, r)} = \#P'(d) \parens*{\frac{2r}{d}}^m \geq c \#P(d) \parens*{\frac{2r}{d}}^m,
  \end{equation*}
  proving~\eqref{eq:15}. Since
  $A_{1,m}^{P'}(d, r)\subset A_{1,m}^P(d, r)$, it follows that
  \begin{equation*}
    c \#P(d) \parens*{\frac{2r}{d}}^m \leq \Leb\parens*{A_{1,m}^{P'}(d, r)} \leq  \#P(d) \parens*{\frac{2r}{d}}^m,
  \end{equation*}
  and~\eqref{eq:7} follows after applying~\eqref{eq:5}.   
\end{proof}

The next lemma implies that if $\psi(\q)$ is not too large, then the
sets $A_{n,m}^P(\q,\psi(\q))$ are pairwise independent for pairs
$\q_1, \q_2$ that are linearly independent. 

\begin{lemma}\th\label{lem:indA}
  Let $P(\gcd(\q)) \subset (\RR/\gcd(\q)\ZZ)^m$ be a finite set for
  all $\q\in\Zpos^n$. If $\q_1,\q_2\in\Zpos^n$ are linearly independent and each
  set $A_{1,m}^P(\gcd(\q_i), r_i),\, (i=1,2)$ is a union of disjoint balls, then
    \begin{equation*}
      \Leb\parens*{A_{n,m}^P(\q_1, r_1)\cap A_{n,m}^P(\q_2, r_2)} = \Leb\parens*{A_{n,m}^P(\q_1, r_1)}\Leb\parens*{A_{n,m}^P(\q_2, r_2)},
    \end{equation*}
    that is, the sets $A_{n,m}^P(\q_1, r_1)$ and
    $A_{n,m}^P(\q_2, r_2)$ are independent.
\end{lemma}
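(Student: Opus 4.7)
The approach mirrors the setup from the proof of \th\ref{lem:measures} but applies it jointly to both maps $\X \mapsto \q'_1 \X$ and $\X \mapsto \q'_2 \X$. First I would write $\q_i = d_i \q'_i$ with $d_i = \gcd(\q_i)$ and $\q'_i$ primitive, and recall that each $T_i : \TT^{nm} \to \TT^m$, $T_i(\X) = \q'_i \X \pmod 1$, is a surjective continuous group homomorphism and therefore measure-preserving. Setting $B_i := A_{1,m}^P(d_i, r_i) \subset \TT^m$ we have $A_{n,m}^P(\q_i, r_i) = T_i^{-1}(B_i)$, so in particular $\Leb(A_{n,m}^P(\q_i, r_i)) = \Leb(B_i)$.

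The central step is to introduce the joint map $T : \TT^{nm} \to \TT^{2m}$ defined by $T(\X) = (\q'_1 \X, \q'_2 \X) \pmod 1$. One then has
\[
A_{n,m}^P(\q_1, r_1) \cap A_{n,m}^P(\q_2, r_2) = T^{-1}(B_1 \times B_2),
\]
so if $T$ is measure-preserving, the identity follows at once from Fubini on $\TT^{2m} \cong \TT^m \times \TT^m$: the intersection has measure $\Leb(B_1 \times B_2) = \Leb(B_1) \Leb(B_2) = \Leb(A_{n,m}^P(\q_1, r_1)) \Leb(A_{n,m}^P(\q_2, r_2))$.

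Thus the only point that uses the linear independence hypothesis is showing that $T$ is measure-preserving. Working on one column of $\X$ at a time, $T$ is the $m$-fold product of the torus homomorphism $\phi : \TT^n \to \TT^2$ associated with the $2 \times n$ integer matrix $Q = \binom{\q'_1}{\q'_2}$. Linear independence of $\q_1, \q_2$ (equivalently, of $\q'_1, \q'_2$) forces $Q$ to have rank $2$ over $\QQ$, so the underlying real map $\RR^n \to \RR^2$ is surjective, and hence so is $\phi$. By uniqueness of Haar measure, any surjective continuous homomorphism between compact abelian groups pushes Haar to Haar, so $\phi$ (and therefore $T$) is measure-preserving. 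The main obstacle, mild though it is, is precisely this torus-level surjectivity plus the Haar-to-Haar principle; everything else is routine bookkeeping extending the measure-preservation setup from \th\ref{lem:measures}.
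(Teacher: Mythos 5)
Your proof is correct, and it takes a genuinely different route from the paper's. The paper decomposes each $A_{n,m}^P(\q_i,r_i)$ into a disjoint union of elementary sets $E_{n,m}(\q_i,r_i,\bv_i)$, each a product of one-dimensional pieces $E_{n,1}$, invokes \cite[Lemma~1]{ramirez2023duffinschaeffer} for the pairwise independence of the $E_{n,1}$'s attached to linearly independent $\q_1,\q_2$, and then sums over the pieces---which is exactly where the disjoint-balls hypothesis is used. You instead package everything into the joint homomorphism $T=(T_1,T_2):\TT^{nm}\to\TT^{2m}$ and reduce the whole statement to the fact that a surjective continuous homomorphism of compact abelian groups pushes Haar measure to Haar measure; linear independence of $\q_1,\q_2$ is precisely what makes the columnwise map $\TT^n\to\TT^2$ surjective, and the identity $A_{n,m}^P(\q_1,r_1)\cap A_{n,m}^P(\q_2,r_2)=T^{-1}(B_1\times B_2)$ is the same factorization through $\X\mapsto\q_i'\X$ that the paper uses in \th\ref{lem:measures}. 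Both steps check out. What your route buys: it is self-contained (no appeal to the external lemma), and it never uses the hypothesis that the sets $A_{1,m}^P(\gcd(\q_i),r_i)$ are unions of disjoint balls, so you in fact prove the stronger statement that $T_1^{-1}(B_1)$ and $T_2^{-1}(B_2)$ are independent for arbitrary measurable $B_1,B_2\subset\TT^m$. What the paper's route buys is that the $E$-decomposition is reused elsewhere (for instance in \th\ref{lem:regularity}), so the cited lemma carries no extra overhead in context.
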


\begin{proof}
  The set  $A_{n,m}^{P}(\q_i, r_i)$ is a union of finitely many
  disjoint sets
  \begin{equation}\label{eq:2}
    A_{n,m}^{P}(\q_i, r_i) = \bigcup_{E\in\calE_i} E
  \end{equation}
  where each $E\in\calE_i$ is of the form
  \begin{equation*}
    E_{n,m}(\q_i, r_i, \bv_i) = \set*{\X \in M_{n\times m}(\RR) : \q_i\X \in B(\bv_i, r_i)}  + \ZZ^{nm} \subset \TT^{nm},
  \end{equation*}
  where $\bv_i = (v_1^{(i)}, \dots, v_m^{(i)})\subset \ZZ^m$ need not
  be specified in this argument. In turn, each such set is naturally a
  product of its projections,
  \begin{equation*}
    E_{n,m}(\q_i, r_i, \bv_i) = \prod_{j=1}^m E_{n,1}(\q_i, r_i, v_j^{(i)}). 
  \end{equation*}
  It is not hard to show (see for
  example~\cite[Lemma~1]{ramirez2023duffinschaeffer}) that the sets
  $E_{n,1}(\q_1)$ and $E_{n,1}(\q_2)$ are independent if $\q_1$ and
  $\q_2$ are linearly independent, hence the same holds for the sets
  $E_{n,m}(\q_i, r_i, \bv_i)\, (i=1,2)$. And since the
  unions~\eqref{eq:2} are disjoint, we may conclude that the sets
  $A_{n,m}^{P}(\q_i, r_i)\, (i=1,2)$ are independent.
\end{proof}

The next lemma concerns sets $E_{1,m}(q,r,\bv)$ having the form
discussed in the proof of \th~\ref{lem:indA}. With $n=1$, the set
$E_{1,m}(q,r,\bv)$ is a grid of balls in $\TT^m$. The lemma
observes that as $q\to\infty$, that grid of balls becomes uniformly
distributed in $\TT^m$.

\begin{lemma}\th\label{lem:equidistribution}
  Let $m\geq 1$. For every ball $W\subset \TT^m$, there exists
  $Q\geq 1$ such that for all $q\geq Q$, $r\geq 0$, and $\bv\in\RR^m$,
  \begin{equation}\label{eq:36}
    \Leb\parens*{E_{1,m}(q, r, \bv)\cap W}  \geq \frac{1}{2}
    \Leb\parens*{E_{1,m}(q, r, \bv)}\Leb\parens*{W}.
  \end{equation}
\end{lemma}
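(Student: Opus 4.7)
The plan is to exploit the periodicity of $E := E_{1,m}(q,r,\bv)$ under the lattice $\tfrac{1}{q}\ZZ^m$. Indeed, unwinding the definition gives $E = \tfrac{\bv}{q} + B(\bzero, \tfrac{r}{q}) + \tfrac{1}{q}\ZZ^m \pmod 1$, so $E$ is invariant under translation by any element of $\tfrac{1}{q}\ZZ^m$ on $\TT^m$. First I would partition $\TT^m$ into the $q^m$ half-open cubes $C_{\bk} = \tfrac{\bk}{q} + [0, \tfrac{1}{q})^m$ for $\bk \in \{0,1,\dots,q-1\}^m$. These cubes are all translates of $C_{\bzero}$ by elements of the invariance lattice, so the translation-invariance of $E$ forces
\begin{equation*}
  \Leb\parens*{E\cap C_\bk} = \frac{\Leb(E)}{q^m} \qquad\textrm{for every } \bk.
\end{equation*}

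Next I would estimate $\Leb(E\cap W)$ from below by restricting to those $C_\bk$ that lie entirely inside $W$. Since distances are measured in the maximum norm, $W$ is a cube of side $2\rho$, where $\rho>0$ is the radius of $W$; we may assume $\rho < 1/2$, so that $W$ does not wrap around $\TT^m$. An elementary one-dimensional count shows that in each coordinate, the number of integers $k_i$ for which the corresponding slab of $C_\bk$ fits inside the corresponding slab of $W$ is at least $2\rho q - 2$. Hence, multiplying over the $m$ coordinates,
\begin{equation*}
  \Leb\parens*{E\cap W} \;\geq\; (2\rho q - 2)^m \cdot \frac{\Leb(E)}{q^m} \;=\; \parens*{1 - \frac{1}{\rho q}}^m \Leb(W)\Leb(E).
\end{equation*}

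Finally, I would choose $Q$ so large (depending only on $\rho$, hence only on $W$) that $(1 - \tfrac{1}{\rho q})^m \geq \tfrac{1}{2}$ for every $q \geq Q$; for instance any $Q \geq 1/(\rho(1 - 2^{-1/m}))$ works. For such $q$ the displayed inequality yields~\eqref{eq:36}, completing the proof. The only real subtlety is the boundary count of small cubes fitting inside $W$, but the periodicity of $E$ under the $\tfrac{1}{q}\ZZ^m$-lattice reduces everything to a deterministic comparison between the side length of $W$ and the mesh size $1/q$, so no genuine obstacle arises; the lemma amounts to the fact that $E$ has uniform local density on scales much larger than $1/q$.
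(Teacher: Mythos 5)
Your proof is correct, but it takes a genuinely different and more elementary route than the paper's. You exploit the exact invariance of $E:=E_{1,m}(q,r,\bv)$ under the lattice $\tfrac1q\ZZ^m$ to get the identity $\Leb(E\cap C_\bk)=\Leb(E)/q^m$ on every cell of the grid $\tfrac1q\ZZ^m+[0,\tfrac1q)^m$, and then reduce the lemma to counting how many such cells fit inside the cube $W$; the boundary loss of at most $2$ cells per coordinate gives the factor $\parens{1-\tfrac1{\rho q}}^m$, which exceeds $\tfrac12$ once $q\geq 1/(\rho(1-2^{-1/m}))$. The paper instead shrinks $W$ to a concentric $W'$ with $\Leb(W')=\tfrac1{\sqrt2}\Leb(W)$ and invokes the (uniform-in-translates) weak-$*$ equidistribution of the measures $\mu_q=q^{-m}\sum_{\p}\delta_{\p/q}$, counting ball centers in $W'$ rather than grid cells in $W$. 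Your argument buys an explicit, effective $Q$ and replaces the paper's soft equidistribution input with a two-line tiling identity; the paper's version is slightly more robust in that it would survive replacing the exact lattice of centers by any sufficiently equidistributed point set. Two housekeeping points you implicitly handle but should state: if $\rho\geq\tfrac12$ then $W=\TT^m$ and the claim is trivial, and the case $r\geq\tfrac12$ (where $E=\TT^m$ and $\Leb(E)\neq(2r)^m$) causes no trouble for you precisely because you never use the formula $\Leb(E)=(2r)^m$, only the $\tfrac1q\ZZ^m$-invariance.
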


\begin{remark*}
  The precise value of the constant $1/2$ is not important. It can
  easily be increased to anything less than $1$, although such an
  improvement will not be needed here.
\end{remark*}

\begin{proof}
  If $r\geq 1/2$, then $E_{1,m}(q,r,\bv) = \TT^m$, so~(\ref{eq:36})
  obviously holds. Therefore, the lemma only needs to be proved for
  $0\leq r < 1/2$. In this case, $E_{1,m}(q,r,\bv)$ is a union of
  $q^m$ disjoint balls of radius $r/q$, so
  \begin{equation*}
    \Leb(E_{1,m}(q,r,\bv)) = (2r)^m.
  \end{equation*}
  Notice also that
  \begin{equation*}
    E_{1,m}(q, r, \bv)\cap W = E_{1,m}(q, r, \bzero)\cap (W+\bv')
  \end{equation*}
  for some $\bv'\in\RR^m$.

  Let
  \begin{equation*}
    \mu_q = \frac{1}{q^m}\sum_{\p\in (\ZZ/q\ZZ)^m}\delta_{\p/q}
  \end{equation*}
  be the measure uniformly supported on the rational points in $\TT^m$
  having denominator $q$. These measures equidistribute, that is, for
  every open ball $V\subset \TT^m$, we have
  \begin{equation}\label{eq:60}
    \mu_q(V) \to \Leb(V) \qquad\textrm{as} \qquad q\to \infty.
  \end{equation}
  One can prove this by taking an arbitrary ball $V\subset \TT^m$ of
  sidelength $\ell>0$ and computing
  \begin{align}
    \mu_q(V) &= \frac{1}{q^m}\sum_{\p\in (\ZZ/q\ZZ)^m}\delta_{\p/q}(V) \nonumber \\
             &= \frac{1}{q^m}\sum_{\p\in \ZZ^m}\delta_{\p}(qV),\label{eq:empirical}
  \end{align}
  where $qV$ is the image of $V\subset [0,1)^m\subset \RR^m$ under the
  $\times q $ map in $\RR^m$. Since $qV$ is an axis-parallel hypercube
  of sidelength $q\ell$, it is readily seen that
  \begin{align*}
    \sum_{\p\in \ZZ^m}\delta_{\p}(qV)
    &= \parens*{q\ell + O(1)}^m \\
    &= q^m\ell^m + O(q^{m-1}\ell^{m-1}),
  \end{align*}
  with an implicit constant depending only on $m$. Combining
  with~(\ref{eq:empirical}) gives
  \begin{align}
    \mu_q(V)
    &= \frac{1}{q^m}\parens*{q^m\ell^m + O(q^{m-1}\ell^{m-1})}\nonumber\\
    &= \ell^m + O\parens*{\frac{\ell^{m-1}}{q}} \nonumber \\
    &= \Leb(V) + O\parens*{\frac{\Leb(V)^{(m-1)/m}}{q}}.\label{eq:61}
  \end{align}
  Since for each ball $V$ the error term goes to $0$ as $q\to \infty$,
  the equidistribution~(\ref{eq:60}) follows. In fact,
  from~(\ref{eq:61}) one can see that the rate of equidistribution may
  depend on the volume of $V$, but not on the location of its
  center. That is, for every ball $V\subset \TT^m$ and $\eps>0$, there
  exists $q_0:=q_0(V,\eps)$ such that
  \begin{equation*}
    \abs{\mu_q(V+\bv) -\Leb(V+\bv)} < \eps
  \end{equation*}
  holds for all $\bv\in\RR^m$ and $q\geq q_0$.

  Let $W'$ be the concentric scaling of $W$ such that
  $\Leb(W') = \frac{1}{\sqrt{2}} \Leb(W)$. Then there exists $Q\geq 1$
  such that the following hold for all $q\geq Q$:
  \begin{enumerate}
  \item $\mu_q (W' + \bw) \geq \frac{1}{\sqrt{2}} \Leb (W'+\bw) = \frac{1}{2} \Leb(W+\bw)$ for all $\bw\in\RR^m$.
  \item Every ball of diameter $1/q$ centered in $W'$ is fully
    contained in $W$.
  \end{enumerate}
  The second point implies that for every $q\geq Q$, and every
  $0\leq r < 1/2$,
  \begin{equation*}
    \Leb\parens*{E_{1,m}(q,r,\bzero)\cap (W + \bv')} \geq (2r)^m\mu_q(W' + \bv') =  \Leb\parens*{E_{1,m}(q,r,\bzero)}\mu_q(W' + \bv'),
  \end{equation*}
  and the first point implies that 
  \begin{equation*}
    \Leb\parens*{E_{1,m}(q,r,\bzero)\cap (W+\bv')} \geq \frac{1}{2}\Leb\parens*{E_{1,m}(q,r,\bzero)}\Leb(W+\bv').
  \end{equation*}
  Equivalently,
  \begin{equation*}
    \Leb\parens*{E_{1,m}(q,r,\bv)\cap W} \geq \frac{1}{2}\Leb\parens*{E_{1,m}(q,r,\bv)}\Leb(W).
  \end{equation*}
  The lemma is proved.
\end{proof}

In the next lemma, it is observed that if $\abs{\q/\gcd(\q)}$ is
large, then a natural projection of $A_{n,m}^P(\q,\psi(\q))$ to
$\TT^m$ consists of sets to which \th~\ref{lem:equidistribution}
applies, and therefore the sets $A_{n,m}^P(\q,\psi(\q))$ inherit the
uniformity of that lemma. This is crucial for the proof of
\th\ref{thm:bootstrap11}.

\begin{lemma}\th\label{lem:regularity}
  For every open set $U\subset\TT^{nm}$ there exists $Q_0\geq 1$ such
  that the following holds.  For each $\q$, let
  $P(\gcd(\q))\subset(\RR/\gcd(\q)\ZZ)^m$ be a finite set. If
  $\abs{\q/\gcd(\q)} \geq Q_0$ and~(\ref{eq:11}) holds, then
  \begin{equation}\label{eq:48}
    \Leb\parens*{A_{n,m}^P(\q, r)\cap U} \geq \frac{1}{3} \Leb\parens*{A_{n,m}^P(\q, r)}\Leb\parens*{U}.
  \end{equation}
\end{lemma}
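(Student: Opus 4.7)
Write $\q = d\q'$ with $d = \gcd(\q)$ and $\q' = \q/d$ primitive, so the hypothesis becomes $\abs{\q'} \geq Q_0$. The map $T_{\q'} : \TT^{nm} \to \TT^m$ defined by $\X \mapsto \q'\X$ is a measure-preserving torus epimorphism, as used in the proof of Lemma~\ref{lem:measures}, and gives the identity $A_{n,m}^P(\q, r) = T_{\q'}^{-1}(A_{1,m}^P(d, r))$. Disintegrating Lebesgue measure along the fibers of $T_{\q'}$---each a translate of the $(n-1)m$-dimensional subtorus $\ker T_{\q'} \subset \TT^{nm}$---we have
\begin{equation*}
  \Leb\parens*{A_{n,m}^P(\q, r) \cap U} = \int_{A_{1,m}^P(d, r)} \nu_{\q', \y}(U)\, d\y,
\end{equation*}
where $\nu_{\q', \y}$ is the Haar probability measure on $T_{\q'}^{-1}(\y)$. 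Since $\Leb(A_{n,m}^P(\q, r)) = \Leb(A_{1,m}^P(d, r))$ by Lemma~\ref{lem:measures}, proving~\eqref{eq:48} reduces to the uniform lower bound $\nu_{\q', \y}(U) \geq \tfrac{1}{3} \Leb(U)$ for all $\y \in \TT^m$, valid whenever $\abs{\q'} \geq Q_0(U)$.

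The uniform bound is a quantitative equidistribution of the subtori $\ker T_{\q'}$ in $\TT^{nm}$ as $\abs{\q'} \to \infty$ over primitive $\q'$. A character $\chi_M(\X) = e^{2\pi i \langle M, \X\rangle}$ of $\TT^{nm}$ (with $M \in \ZZ^{n\times m}$) restricts trivially to $\ker T_{\q'}$ precisely when $M$ lies in the annihilator $\set{\q' \otimes \bk : \bk \in \ZZ^m}$, and every nonzero element of this annihilator has supremum norm $\abs{M} = \abs{\q'} \cdot \abs{\bk} \geq \abs{\q'}$. Consequently, for any $f \in L^1(\TT^{nm})$ with absolutely convergent Fourier series, the calculation $\int e^{2\pi i \langle \q'\otimes\bk, \X\rangle} d\nu_{\q', \y}(\X) = e^{2\pi i \bk \cdot \y}$ yields
\begin{equation*}
  \int f\, d\nu_{\q', \y} - \int f\, d\Leb = \sum_{\bk \in \ZZ^m \setminus\set{\bzero}} \hat f(\q' \otimes \bk)\, e^{2\pi i \bk \cdot \y}.
\end{equation*}

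To apply this to $\mathbf{1}_U$, I would pick, by inner regularity of $\Leb$, a smooth $\phi_U : \TT^{nm} \to [0,1]$ with $\phi_U \leq \mathbf{1}_U$ and $\int \phi_U\, d\Leb \geq (1-\eps)\Leb(U)$ for a fixed $\eps < 1/3$. Smoothness gives $\abs{\hat\phi_U(M)} \leq C_{k, U}\abs{M}^{-k}$ for every $k$, and so for any $k > m$,
\begin{equation*}
  \abs*{\int \phi_U\, d\nu_{\q', \y} - \int \phi_U\, d\Leb} \leq C_{k,U}\abs{\q'}^{-k} \sum_{\bk \neq \bzero} \abs{\bk}^{-k},
\end{equation*}
which tends to $0$ uniformly in $\y$ as $\abs{\q'} \to \infty$. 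Taking $Q_0$ large enough that this bound is at most $\eps\Leb(U)$ gives $\nu_{\q', \y}(U) \geq \int \phi_U\, d\nu_{\q', \y} \geq (1 - 2\eps)\Leb(U) \geq \tfrac{1}{3}\Leb(U)$, as required. The delicate step is this one-sided smoothing: because $\nu_{\q', \y}$ is singular, no $L^1$-approximation argument is available, so we must dominate $\mathbf{1}_U$ from below by a smooth $\phi_U$ and absorb the resulting factor loss into the constant $\tfrac{1}{3}$ appearing in the statement.
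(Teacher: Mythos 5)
Your proof is correct, but it takes a genuinely different route from the paper's. The paper approximates $U$ from inside by finitely many disjoint balls (costing the factor $2/3$), splits $A_{n,m}^P(\q,r)$ into the disjoint translates $E_{n,m}(\q,r,\bv)$, and applies Fubini along the coordinate row in which $\abs{\q}$ is achieved; the cross-sections are then grids $E_{1,m}(\abs{\q/\gcd(\q)}, r/\gcd(\q),\bv_\Z)$ of balls in $\TT^m$, and the needed regularity comes from \th\ref{lem:equidistribution}, a soft weak-$*$ equidistribution of the rational points of denominator $q$ combined with a concentric-shrinking trick to pass from smooth test functions to the indicator of a ball (costing the factor $1/2$). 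You instead disintegrate $\Leb$ over the fibers of $T_{\q'}:\X\mapsto\q'\X$ and prove quantitative equidistribution of the $(n-1)m$-dimensional kernel subtori directly by Fourier analysis on $\TT^{nm}$, handling the indicator-versus-smooth issue with a one-sided smooth minorant of $\bone_U$. The two arguments rest on the same underlying fact---the only characters of $\TT^{nm}$ trivial on $\ker T_{\q'}$ are the $\q'\otimes\bk$, of frequency at least $\abs{\q'}$, which in the paper manifests as the $1/q$-spacing of the grids---but yours has some advantages: it treats an arbitrary open $U$ in one stroke without the ball decomposition, it does not actually use hypothesis~\eqref{eq:11} (the disintegration identity and the equality $\Leb(A_{n,m}^P(\q,r))=\Leb(A_{1,m}^P(\gcd(\q),r))$ hold regardless of disjointness), and it yields any constant $1-2\eps<1$ in place of $1/3$, consistent with the paper's remark that the value $1/3$ is unimportant. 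The paper's version is more elementary (no Fourier series) and reuses a lemma already needed elsewhere. One small point worth making explicit in your write-up: the identification of the annihilator of $\ker T_{\q'}$ with $\set{\q'\otimes\bk:\bk\in\ZZ^m}$ uses that $\q'$ is primitive (so the kernel is connected and the quotient is exactly $\TT^m$); this is guaranteed here since $\q'=\q/\gcd(\q)$.
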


\begin{remark*}
  As in \th\ref{lem:equidistribution}, the precise value $1/3$ is
  not important.
\end{remark*}

\begin{proof}
  The set $A_{n,m}^P(\q,r)$ is the finite union
  \begin{equation}
    A_{n,m}^{P}(\q, r) = \bigcup_{E\in\calE} E,
  \end{equation}
  where each $E\in\calE$ is of the form
  \begin{equation*}
    E_{n,m}(\q, r, \bv) = \set*{\X \in M_{n\times m}(\RR) : \q\X \in B(\bv, r)}  + \ZZ^{nm} \subset \TT^{nm},
  \end{equation*}
  (for some $\bv:=\bv_E$) as in the proof of \th\ref{lem:indA}.  Since
  $r\geq 0$ satisfies~(\ref{eq:11}), the sets $E\in\calE$ are pairwise
  disjoint. It is therefore enough to prove this lemma with
  $E_{n,m}(\q, r, \bv)$ in place of $A_{n,m}^P(\q,r)$.

  Since $U$ is open, there are finitely many disjoint balls
  $V_1, V_2, \dots, V_L\subset U$ such that
  \begin{equation*}
    \sum_{i=1}^L \Leb(V_i) \geq \frac{2}{3}\Leb(U).
  \end{equation*}
  The lemma will be proved after establishing that there exists
  $Q_0\geq 1$ such that
  \begin{equation}\label{eq:35}
    \Leb\parens*{E_{n,m}(\q, r, \bv)\cap V} \geq \frac{1}{2}\Leb\parens*{E_{n,m}(\q, r, \bv)}\Leb\parens*{V}  \qquad (V= V_1, \dots, V_L)
  \end{equation}
  for all $\q\in\Zpos^n$ with $\abs{\q/\gcd(\q)} \geq Q_0$.

  Write
  \begin{equation}\label{eq:33}
    \TT^{mn} = \pi(\TT^{mn})\times\pi(\TT^{mn})^\perp 
    \cong \TT^m\times\TT^{mn-m}
  \end{equation}
  where $\pi$ is the orthogonal projection of $\TT^{nm}$ to the row
  corresponding to the coordinate in which $\abs{\q}$ is achieved, and
  $\pi(\TT^{mn})^\perp \cong \TT^{mn-n}$ is the orthogonal complement,
  so that every point in $\TT^{mn}$ has a unique expression as
  $\X = (\Y,\Z)$ in the product. Denote $\pi^\perp(\Y,\Z)=\Z$. The
  Lebesgue measure of $\TT^{mn}$ splits accordingly as
  \begin{equation*}
    \Leb_{nm} = \Leb_{m}\times \Leb_{mn-m},
  \end{equation*}
  which for the rest of this proof we will write as
  $\Leb = \Leb_1 \times \Leb_2$ to avoid cumbersome subscripts.  For a
  set $S\subset\TT^{mn}$, let
  \begin{equation*}
    S_\Z = \pi\parens*{S \cap (\TT^m \times \set{\Z})}
  \end{equation*}
  be the cross-section of $S$ through $\Z\in\pi(\TT^{mn})^\perp$, so
  that by Fubini's theorem, the measure of $S$ is
  \begin{equation*}
  \Leb(S) = \int \Leb_1(S_\Z)\, \operatorname{d}\Leb_2(\Z).
  \end{equation*}
  Then
  \begin{equation}\label{eq:34}
    \Leb\parens*{E_{n,m}(\q, r, \bv)\cap V} = \int \Leb_1\parens*{(E_{n,m}(\q, r, \bv)\cap V)_\Z}\,\diff\Leb_2(\Z).
  \end{equation}
  But for every $\Z\in\TT^{mn-m}$ there is some $\bv_\Z\in\RR^m$ such
  that
  \begin{align*}
    (E_{n,m}(\q, r, \bv)\cap V)_\Z
    &= E_{1,m}(\abs{\q/\gcd(\q)}, r/\gcd(\q), \bv_\Z)\cap V_\Z\\
    &=
    \begin{cases}
      E_{1,m}(\abs{\q/\gcd(\q)}, r/\gcd(\q), \bv_\Z)\cap \pi(V) &\textrm{if } \Z \in \pi^\perp(V)\\
      \emptyset &\textrm{if } \Z\notin \pi^\perp(V).
    \end{cases}
  \end{align*}
  The cross-sections are sets of the kind treated in
  \th\ref{lem:equidistribution}.

  Let $W_i = \pi(V_i)$ for $i=1, \dots, L$, and let $Q_i\geq 1$ be as
  in \th\ref{lem:equidistribution}. Put $Q_0 = \max_i Q_i$. Then, as
  long as $\abs{\q/\gcd(\q)} \geq Q_0$, we have
  \begin{align*}
    \Leb_1 \parens*{E_{1,m}(\abs{\q/\gcd(\q)}, r/\gcd(\q), \bv_\Z)\cap \pi(V)}
    &\geq \frac{1}{2}\Leb_1\parens*{E_{1,m}(\abs{\q/\gcd(\q)}, r/\gcd(\q), \bv_\Z)}\Leb_1(\pi(V))\\
    &= \frac{1}{2}\Leb\parens*{E_{n,m}(\q, r, \bv)}\Leb_1( \pi(V)).
  \end{align*}
  Returning to~(\ref{eq:34}),
  \begin{align*}
    \Leb\parens*{E_{n,m}(\q, r, \bv)\cap V}
    &= \int \Leb_1\parens*{(E_{n,m}(\q, r, \bv)\cap V)_\Z}\,\diff\Leb_2(\Z) \\
    &\geq \frac{1}{2} \Leb\parens*{E_{n,m}(\q, r, \bv)} \int \Leb_1( \pi(V))\bone_{\pi^\perp(V)}(\Z)\,\diff\Leb_2(\Z) \\
    &= \frac{1}{2} \Leb\parens*{E_{n,m}(\q, r, \bv)} \int \Leb_1(V_\Z)\,\diff\Leb_2(\Z) \\
    &= \frac{1}{2} \Leb\parens*{E_{n,m}(\q, r, \bv)} \Leb(V),
  \end{align*}
  giving~(\ref{eq:35}), which was the goal. 
\end{proof}

The next lemma is essentially a restatement of \th\ref{thm:BV}
(Beresnevich--Velani~\cite{BVBC}) that applies directly in the proofs
of \th\ref{thm:bootstrap,thm:bootstrap11}. Under the assumption that
$\Leb\parens{W_{1,m}^P(\Psi)}>0$, \th\ref{thm:BV} provides a
(sub)sequence of balls contained in the sets $A_{1,m}^P(d,\Psi(d))$
which is quasi-independent on average. The point of the following
lemma is to show, using \th\ref{thm:BV}, that there is a refinement
$R\subset P$ such that the sets $A_{1,m}^R(d,\Psi(d))$ themselves are
quasi-independent on average.

\begin{lemma}\th\label{lem:BVtranslation}
  Let $m\in\NN$. Suppose for every $d\geq 1$,
  $P(d)\subset (\RR/d\ZZ)^m$ is a finite set and for each
  $\q\in\Zpos^n$ let $P(\q)$ be the lift to $\RR^m$ of
  $P(\gcd(\q))$. Suppose $\Psi:\Zpos\to\Rpos$ with
  \begin{equation}\label{eq:54}
    \Psi(d) = o(d)\quad\textrm{and}\quad(\#P(d))^{1/m}\frac{\Psi(d)}{d}=O(1),
  \end{equation}
  and for each $d\geq 1$,
  \begin{equation}\label{eq:41}
    \Psi(d) < \frac{1}{2}\min\set*{\abs{\p_1
        - \p_2} : \p_1,\p_2 \in \bar P(d), \p_1\neq \p_2},
  \end{equation}
  where $\bar P(d)$ is the lift of $P(d)$ to $\RR^m$. If
  $\Leb (W_{1,m}^P(\Psi)) > 0$, then there exists a refinement
  $R\subset P$ such that
  \begin{equation}\label{eq:28}
    \sum_{d = 1}^\infty\Leb\parens*{A_{1,m}^{R}(d, \Psi(d))}  = \infty
  \end{equation}
  and such that for infinitely many $D\geq 1$,
  \begin{equation}\label{eq:37}
    \sum_{k,\ell = 1}^D\Leb\parens*{A_{1,m}^{R}(k, \Psi(k)) \cap A_{1,m}^{R}(\ell, \Psi(\ell))} \leq C \parens*{\sum_{d=1}^D \Leb\parens*{A_{1,m}^{R}(d, \Psi(d))}}^2
  \end{equation}
  where $C=K\Leb\parens{W_{1,m}^P(\Psi)}^{-2}$ and $K>0$ is an
  absolute constant.
\end{lemma}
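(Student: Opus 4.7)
The strategy is to realize $W_{1,m}^P(\Psi)$ as the limsup of a single sequence of balls, apply the Beresnevich--Velani converse (\th\ref{thm:BV}) to that sequence, and read the refinement $R$ off of the resulting quasi-independent subsequence.

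First, for each $d\geq 1$ and $\p\in P(d)$, let $B_{d,\p}\subset\TT^m$ denote the ball of radius $\Psi(d)/d$ centered at $\p/d\pmod{\ZZ^m}$. By hypothesis~\eqref{eq:41}, the balls $\{B_{d,\p}\}_{\p\in P(d)}$ are pairwise disjoint for each fixed $d$, and $A_{1,m}^P(d,\Psi(d))=\bigsqcup_{\p\in P(d)}B_{d,\p}$. I enumerate all of the $B_{d,\p}$ as a single sequence $\{B_i\}_{i\geq 1}$, ordered so that the $d$-value of $B_i$ is non-decreasing in $i$. Since~\eqref{eq:54} forces $\Psi(d)/d\to 0$, the radii of the $B_i$ tend to $0$, and $W_{1,m}^P(\Psi)=\limsup_i B_i$. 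Lebesgue measure is doubling on $\TT^m$ and $\Leb(aB_i)\leq a^m\Leb(B_i)$ for every $a>1$ once $i$ is large enough. I would then apply \th\ref{thm:BV} to produce a subsequence $\{L_i\}\subset\{B_i\}$ with $\sum_i\Leb(L_i)=\infty$ and, for infinitely many $Q\in\NN$,
\[
\sum_{s,t\leq Q}\Leb(L_s\cap L_t)\leq C\parens*{\sum_{s\leq Q}\Leb(L_s)}^2,\qquad C=K\Leb(W_{1,m}^P(\Psi))^{-2}.
\]

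Next, I would set $R(d):=\{\p\in P(d):B_{d,\p}\in\{L_i\}\}$ for every $d\geq 1$. Because the $B_{d,\p}$ are pairwise disjoint within each $d$-group, $A_{1,m}^R(d,\Psi(d))=\bigsqcup_{\p\in R(d)}B_{d,\p}$, and
\[
\sum_{d\geq 1}\Leb\parens*{A_{1,m}^R(d,\Psi(d))}=\sum_i\Leb(L_i)=\infty,
\]
verifying~\eqref{eq:28}. A routine disjointness/distributivity computation across $d$-groups yields the bookkeeping identity
\[
\sum_{k,\ell\leq D}\Leb\parens*{A_{1,m}^R(k,\Psi(k))\cap A_{1,m}^R(\ell,\Psi(\ell))}=\sum_{\substack{i,j\\ d_i,d_j\leq D}}\Leb(L_i\cap L_j),
\]
where $d_i$ denotes the $d$-value associated to $L_i$.

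Finally, since the $L_i$'s are ordered by non-decreasing $d$-value, $\{i:d_i\leq D\}$ is a prefix $\{1,\dots,N(D)\}$ of the subsequence. Whenever $Q$ is one of the BV-values \emph{and} $L_Q$ is the last member of its $d$-group (i.e.\ $d_{L_{Q+1}}>d_{L_Q}$), the choice $D=d_{L_Q}$ gives $N(D)=Q$, and the Beresnevich--Velani estimate translates directly into~\eqref{eq:37}. The main obstacle is showing that \emph{infinitely many} BV-values $Q$ satisfy this boundary condition. I would address this by combining the infinitude of the BV $Q$-values with the finiteness of each $d$-group and the divergence $\sum_i\Leb(L_i)=\infty$ (which forces infinitely many distinct $d$-groups to be represented in $\{L_i\}$); if necessary, I would pass to a sub-subsequence that aligns BV-values with $d$-boundaries, at the cost of at worst enlarging the constant $C$ by a harmless factor.
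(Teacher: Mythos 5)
Your proposal is correct and follows essentially the same route as the paper: enumerate the disjoint balls making up the sets $A_{1,m}^P(d,\Psi(d))$ into a single sequence, apply \th\ref{thm:BV}, read the refinement $R$ off the resulting subsequence $\set{L_i}$, and match the BV indices $Q$ to $d$-group prefixes. The only step you leave open is the one you flag: the BV-values $Q$ need not land on $d$-group boundaries. The paper resolves this not by forcing alignment but by sandwiching, $\set{B_j^{(d)}}_{d\leq D}\subset\set{L_i}_{i\leq Q}\subset\set{B_j^{(d)}}_{d\leq D+1}$, and observing that the one straddling group contributes $\Leb(A_{1,m}^R(D+1,\Psi(D+1)))=\#R(D+1)(2\Psi(D+1)/(D+1))^m=O(1)$ by the second half of~\eqref{eq:54} (or simply by the disjointness~\eqref{eq:41}), which is negligible against the divergent partial sums~\eqref{eq:28}; this is exactly the ``harmless factor'' you anticipate, and is why the paper's $C$ is twice the BV constant. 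Be careful with your fallback of ``passing to a sub-subsequence'': quasi-independence on average is not inherited by arbitrary sub-subsequences, so that move would not literally work, whereas the sandwich-and-absorb argument does and requires no further selection.
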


\begin{proof}
  Each set $A_{1,m}^P(d, \Psi(d))$ is a union of $\#P(d)$ balls in
  $\TT^m$, which we may enumerate as 
  \begin{equation*}
    A_{1,m}^P(d, \Psi(d)) = \bigcup_{\p\in P(d)} B\parens*{\frac{\p}{d}, \frac{\Psi(d)}{d}} := \bigcup_{j=1}^{\#P(d)} B_{j}^{(d)},
  \end{equation*}
  concatenate as
  \begin{equation}\label{eq:39}
    \set*{\set*{B_j^{(d)}}_{j=1}^{\#P(d)}}_{d\geq 1},
  \end{equation}
  and relabel as $\set{B_i}_{i=1}^\infty$. The assumption
  $\Psi(d)=o(d)$ implies that the radii of the balls $B_i$ approach
  $0$ as $d\to\infty$. Since
  \begin{equation*}
    \Leb(\limsup_{i\to\infty} B_i) > 0,
  \end{equation*}
  \th~\ref{thm:BV} applies. It provides a subsequence $\set{L_i}$ of
  $\set{B_i}$ and a constant $C>0$ such that
  \begin{equation}\label{eq:27}
    \sum_{i=1}^\infty \Leb(L_i) = \infty
  \end{equation}
  and for infinitely many $Q\geq 1$, 
  \begin{equation}\label{eq:38}
    \sum_{s,t = 1}^Q\Leb(L_s\cap L_t) \leq \frac{C}{2}\parens*{\sum_{s=1}^Q \Leb(L_s)}^2.
  \end{equation}
  The subsequence $\set{L_i}$ corresponds naturally to a refinement
  $R(d)\subset P(d)$ $(d\geq 1)$ and a subsequence of~(\ref{eq:39}),
  which after reindexing can be written
  \begin{equation}\label{eq:40}
    \set*{\set*{B_j^{(d)}}_{j=1}^{\#R(d)}}_{d\geq 1}.
  \end{equation}
  The divergence~(\ref{eq:27}) is equivalent to~(\ref{eq:28}), so it
  is only left to establish~(\ref{eq:37}).

  Let $Q\geq 1$ be such that~(\ref{eq:38}) holds. There is a
  corresponding $D\geq 1$ such that
  \begin{equation*}
    \set*{\set*{B_j^{(d)}}_{j=1}^{\#R(d)}}_{d=1}^{D} \subset     \set*{L_i}_{i=1}^Q \subset 
    \set*{\set*{B_j^{(d)}}_{j=1}^{\#R(d)}}_{d=1}^{D+1}.
  \end{equation*}
  Then
  \begin{align*}
    \sum_{k,\ell = 1}^D\Leb\parens*{A_{1,m}^{R}(k, \Psi(k)) \cap A_{1,m}^{R}(\ell, \Psi(\ell))}
    &\leq \sum_{s,t = 1}^Q\Leb(L_s\cap L_t)\\
    &\leq \frac{C}{2}\parens*{\sum_{s=1}^Q \Leb(L_s)}^2\\
    &\leq \frac{C}{2}\parens*{\sum_{d=1}^{D+1} \Leb\parens*{A_{1,m}^{R}(d, \Psi(d))}}^2.
  \end{align*}
  The last inequality follows from the fact that for each $d\geq 1$,
  the balls $B_j^{(d)}$ from~(\ref{eq:40}) are disjoint, which in turn
  is a consequence of~(\ref{eq:41}). Finally, it is easily seen that
  \begin{align*}
    \sum_{d=1}^{D+1} \Leb\parens*{A_{1,m}^{R}(d, \Psi(d))}
    &= \sum_{d=1}^{D} \Leb\parens*{A_{1,m}^{R}(d, \Psi(d))} + \parens*{\frac{\Psi(D+1)}{D+1}}^m\#R(D+1) \\
    &\sim \sum_{d=1}^{D} \Leb\parens*{A_{1,m}^{R}(d, \Psi(d))}
  \end{align*}
  as $D\to\infty$, by~(\ref{eq:54}) and the now
  established~(\ref{eq:28}). Therefore, if $Q\geq 1$ is sufficiently
  large, then~(\ref{eq:37}) holds. Note that the constant $C>0$
  appearing in this lemma is twice the constant coming from
  \th\ref{thm:BV}, which takes the form as described.
\end{proof}

\section{Reductions}
\label{sec:reductions}

This section contains results showing that the conclusions of
\th\ref{thm:bootstrap,thm:bootstrap11} sometimes follow without having
to assume anything about $\Leb\parens{W_{1,m}^P(\Psi)}$ or
$\Leb(W_{1,m}^P(\Psi_Q))$. The task of proving
\th\ref{thm:bootstrap,thm:bootstrap11} then reduces to treating the
cases where the results of this section do not apply.

We first prove \th\ref{prop:Psifinite}, a result that appears
as~\cite[Chapter~1, Theorem~14]{Sprindzuk} in a form that can be
translated to the notation used here. We include a proof for
completeness.

\begin{proof}[Proof of \th\ref{prop:Psifinite}]
  Notice that if $\Psi_Q(d) = \infty$, then $\Psi_1(d)=\infty$. Let
\begin{equation*}
  b = \min\set*{\abs{\p_1
      - \p_2} : \p_1,\p_2 \in P(d), \p_1\neq \p_2}.
\end{equation*}
Suppose there are infinitely many $\q\in\Zpos^n$ with $\gcd(\q)=d$ for
which $\psi(\q) > b/2$, and denote by $I$ the set of such $\q$. For
each $\q\in I$ we have
\begin{equation}\label{eq:3}
  A_{n,m}^{P}(\q, \psi(\q)) \supset A_{n,m}^{P}(\q, b/2). 
\end{equation}
By \th\ref{lem:measures},
\begin{equation*}
  \Leb\parens*{A_{n,m}^{P}(\q, b/2)} = \frac{\# P(d)b^m}{d^m} > 0
\end{equation*}
for all $\q\in I$. Therefore,
\begin{equation*}
  \sum_{\q\in I}\Leb\parens*{A_{n,m}^{P}(\q, b/2)} = \infty. 
\end{equation*}
By \th\ref{lem:indA} the sets $A_{n,m}^{P}(\q, b/2)$ $(\q\in I)$ are
pairwise independent, therefore by the second Borel--Cantelli lemma,
\begin{equation*}
  \Leb\parens*{\limsup_{\substack{\abs{\q}\to\infty \\ \q\in I}} A_{n,m}^{P}(\q, b/2)} = 1,
\end{equation*}
hence, by~\eqref{eq:3}, $\Leb\parens{W_{n,m}^{P}(\psi)} = 1$.

If, on the other hand, $I$ is a finite set, then by $\Psi_1(d)=\infty$
we must have
\begin{equation*}
  \sum_{\substack{\gcd(\q)=d \\ \q\notin I}} \psi(\q)^m = \infty. 
\end{equation*}
But for each $\q\in\Zpos^n$ with $\gcd(\q)=d$ and $\q\notin I$ we have
\begin{equation*}
  \Leb\parens*{A_{n,m}^{P}(\q, \psi(\q))} = \frac{\#P(d)}{d^m}(2\psi(\q))^m,
\end{equation*}
therefore
\begin{equation*}
  \sum_{\substack{\gcd(\q)=d \\ \q\notin I}}\Leb\parens*{A_{n,m}^{P}(\q, \psi(\q))} = \infty. 
\end{equation*}
Again, by \th\ref{lem:indA} the sets $A_{n,m}^{P}(\q, \psi(\q))$
$(\gcd(\q)=d,\, \q\notin I)$ are pairwise independent, therefore,
\begin{equation*}
  \Leb\parens*{\limsup_{\substack{\abs{\q}\to\infty \\ \gcd(\q)=d \\ \q \notin I}} A_{n,m}^{P}(\q, \psi(\q))} = 1,
\end{equation*}
hence $\Leb\parens*{W_{n,m}^{P}(\psi)} = 1$.
\end{proof}

\begin{claim}\th\label{claim:psivsPalt}
  Suppose $P$ is relatively well-spread and let $a>0$ be as
  in~(\ref{eq:well-spread}). If there are infinitely many
  $\q\in\Zpos^n$ for which
  \begin{equation}\label{eq:12}
    \psi(\q)  \geq  \frac{a}{2}\parens*{\frac{\gcd(\q)}{(\#P(\gcd(\q)))^{1/m}}}
  \end{equation}
  holds, then $\Leb\parens*{W_{n,m}^P(\psi)} >0$. If for infinitely
  many $Q\geq 1$ there exists some $\q\in\Zpos^n$
  satisfying~(\ref{eq:12}) as well as $\abs{\q/\gcd(\q)}\geq Q$, then
  $\Leb\parens*{W_{n,m}^P(\psi)} =1$.
\end{claim}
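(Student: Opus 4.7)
The plan is to exploit hypothesis \eqref{eq:12} to realize $A_{n,m}^P(\q,\psi(\q))$ as a superset of a ``packing'' whose measure is readily bounded below using the well-spreadedness of $P$. Set $d=\gcd(\q)$ and
\begin{equation*}
  r_0 = \frac{ad}{2(\#P(d))^{1/m}},
\end{equation*}
so that \eqref{eq:12} reads simply $\psi(\q) \geq r_0$. The well-spreadedness condition \eqref{eq:well-spread} forces the separation of distinct points of $P'(\q)$ to be at least $2r_0$, so $r_0$ satisfies the disjointness hypothesis \eqref{eq:11} with $P'$ in place of $P$. Applying \th\ref{lem:measures} to $P'$ and $r_0$ therefore gives
\begin{equation*}
  \Leb\parens*{A_{n,m}^{P'}(\q, r_0)} = \#P'(d)\parens*{\frac{2r_0}{d}}^m = \#P'(d)\cdot\frac{a^m}{\#P(d)} \geq c a^m,
\end{equation*}
where the final inequality is $\#P'(d) \geq c\#P(d)$ from \eqref{eq:udiscrete}. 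Since $r_0 \leq \psi(\q)$ and $P' \subset P$, we have $A_{n,m}^{P'}(\q, r_0) \subset A_{n,m}^{P}(\q, \psi(\q))$, hence $\Leb\parens*{A_{n,m}^{P}(\q, \psi(\q))} \geq c a^m$ for every $\q$ satisfying \eqref{eq:12}.

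For the first assertion, the existence of infinitely many such $\q$ produces infinitely many approximation sets of measure at least $c a^m$, and the standard inequality $\Leb(\limsup A_k) \geq \limsup \Leb(A_k)$ (reverse Fatou applied to bounded indicators) then yields
\begin{equation*}
  \Leb\parens*{W_{n,m}^P(\psi)} \geq \limsup_{\abs{\q}\to\infty} \Leb\parens*{A_{n,m}^P(\q,\psi(\q))} \geq c a^m > 0.
\end{equation*}

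For the second assertion, fix an arbitrary open $U \subset \TT^{nm}$. Since \eqref{eq:11} holds for $P'$ with radius $r_0$, \th\ref{lem:regularity} produces $Q_0 \geq 1$ such that for every $\q$ with $\abs{\q/\gcd(\q)} \geq Q_0$,
\begin{equation*}
  \Leb\parens*{A_{n,m}^{P'}(\q, r_0) \cap U} \geq \tfrac{1}{3} \Leb\parens*{A_{n,m}^{P'}(\q, r_0)}\Leb(U) \geq \tfrac{c a^m}{3} \Leb(U).
\end{equation*}
The strengthened hypothesis---that for infinitely many $Q$ there is a $\q$ satisfying both \eqref{eq:12} and $\abs{\q/\gcd(\q)} \geq Q$---supplies an infinite sequence of such $\q$ with $\abs{\q/\gcd(\q)}\to\infty$; another application of reverse Fatou (to the intersected sets) gives $\Leb\parens*{W_{n,m}^P(\psi)\cap U} \geq (c a^m/3)\Leb(U)$. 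Because this bound holds for \emph{every} open $U$, \th\ref{lem:lebesguedensity} applied with $f(x) = (c a^m/3)x$ (an increasing function vanishing at $0$) concludes $\Leb\parens*{W_{n,m}^P(\psi)} = 1$.

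I do not anticipate a substantive obstacle: the content of the claim is essentially that well-spreadedness converts the pointwise lower bound \eqref{eq:12} on $\psi(\q)$ into a uniform positive lower bound on $\Leb(A_{n,m}^P(\q,\psi(\q)))$, after which Fatou delivers positive measure and the regularity lemma \th\ref{lem:regularity} plus the density criterion \th\ref{lem:lebesguedensity} upgrade it to full measure. The only bookkeeping required is careful tracking of the constants $a$ and $c$ through \eqref{eq:udiscrete} and \eqref{eq:well-spread}.
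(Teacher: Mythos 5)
Your proposal is correct. The first assertion is handled exactly as in the paper: \eqref{eq:12} plus well-spreadedness lets \th\ref{lem:measures} give the uniform lower bound $\Leb\parens{A_{n,m}^{P'}(\q,r_0)}\geq ca^m$, and reverse Fatou (which is what the paper's ``this immediately implies'' amounts to) yields positive measure.

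For the second assertion you take a genuinely different route. The paper extracts from the hypothesis a sequence $\q_k$ with $\abs{\q_k/\gcd(\q_k)}$ strictly increasing; these vectors are pairwise linearly independent, so \th\ref{lem:indA} makes the sets $A_{n,m}^{P'}(\q_k,r_0)$ pairwise \emph{independent}, and the second Borel--Cantelli lemma (their measures are bounded below, so the measure sum diverges) gives full measure of the limsup outright. You instead localize: for a fixed open $U$ you invoke \th\ref{lem:regularity} (legitimately, since $r_0$ satisfies \eqref{eq:11} for $P'$ by \eqref{eq:well-spread}) to get $\Leb\parens{A_{n,m}^{P'}(\q,r_0)\cap U}\geq \tfrac{1}{3}ca^m\Leb(U)$ for all $\q$ with $\abs{\q/\gcd(\q)}\geq Q_0(U)$, apply reverse Fatou to the intersected sets, and finish with \th\ref{lem:lebesguedensity}. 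Both arguments are sound. The paper's is shorter and more elementary at this point in the development, needing only \th\ref{lem:indA} and classical Borel--Cantelli; yours trades the exact-independence input for the equidistribution input of \th\ref{lem:regularity} plus the density criterion, which is precisely the machinery the paper deploys later for \th\ref{thm:bootstrap11}, so it is consistent with the toolkit even if slightly heavier here. The only point worth making explicit in your write-up is that $\limsup_k\parens{A_k\cap U}\subset\parens{\limsup_k A_k}\cap U$, which is what lets the Fatou bound transfer to $W_{n,m}^P(\psi)\cap U$.
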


\begin{proof}
  Let $P'$ be as in Definition~\ref{def:well}. For each $\q\in\Zpos^n$
  satisfying~\eqref{eq:12}, \th\ref{lem:measures} implies
  \begin{equation}\label{eq:13}
    \Leb\parens*{A_{n,m}^{P'}\parens*{\q,
        \psi(\q)}} \geq \Leb\parens*{A_{n,m}^{P'}\parens*{\q,
        \frac{a}{2}\parens*{\frac{\gcd(\q)}{(\#P(\gcd(\q)))^{1/m}}}}}
    \geq a^m c > 0,
  \end{equation}
  where $c>0$ is as in~(\ref{eq:udiscrete}). This immediately implies
  that $\Leb\parens*{W_{n,m}^P(\psi)}\geq a^m c > 0$, which proves the
  first part of the claim.

  For the second part of the claim, notice that we may form a sequence
  $\set{\q_k}_{k\geq 1}\subset\Zpos^n$ of vectors
  satisfying~(\ref{eq:12}), such that $\abs{\q_k/\gcd(\q_k)}$
  increases strictly. But each $\q_k/\gcd(\q_k)$ is primitive,
  so the vectors $\q_k$ are pairwise linearly independent. Therefore,
  by \th\ref{lem:indA}, the sets
  \begin{equation*}
    A_{n,m}^{P'}\parens*{\q_k, \frac{a}{2}\parens*{\frac{\gcd(\q_k)}{(\#P(\gcd(\q_k)))^{1/m}}}}
  \end{equation*}
  are pairwise independent. Since they all have measure
  $\geq a^m c>0$, their measure sum diverges and the second
  Borel--Cantelli lemma implies that
  \begin{equation*}
    \limsup_{k\to\infty} A_{n,m}^{P'}\parens*{\q_k, \frac{a}{2}\parens*{\frac{\gcd(\q_k)}{(\#P(\gcd(\q_k)))^{1/m}}}}
  \end{equation*}
  has full measure, hence $\Leb\parens*{W_{n,m}^P(\psi)} = 1$.
\end{proof}

\begin{claim}\th\label{claim:c}
  Assume $P$ is relatively well-spread and that
  \begin{equation}\label{eq:9}
    \psi(\q)  \leq  \frac{a}{2}\frac{\gcd(\q)}{(\#P(\gcd(\q)))^{1/m}} \qquad (\q\in\Zpos^n),
  \end{equation}
  where $a>0$ is as in~\eqref{eq:well-spread}, and $\Psi(d) < \infty$
  for all $d\geq 1$. If there exists $\gd>0$ such that
  \begin{equation*}
    \limsup_{d\to\infty}(\#P(d))^{1/m}\frac{\Psi(d)}{d} \geq \gd,
  \end{equation*}
  then $\Leb\parens*{W_{n,m}^P(\psi)} >0$. Furthermore,
    if for infinitely many $Q\geq 1$,
  \begin{equation}\label{eq:49}
    \limsup_{d\to\infty}(\#P(d))^{1/m}\frac{\Psi_Q(d)}{d} \geq \gd,
  \end{equation}
  then $\Leb\parens*{W_{n,m}^P(\psi)} = 1$.
\end{claim}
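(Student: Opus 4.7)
The plan is to reduce both parts to \th\ref{thm:bootstrap} and \th\ref{thm:bootstrap11} respectively, by showing that the hypothesis on $\Psi$ (resp. on the $\Psi_Q$'s) forces $\Leb\parens{W_{1,m}^P(\Psi)}$ (resp.\ $\inf_Q \Leb\parens{W_{1,m}^P(\Psi_Q)}$) to be bounded below by a positive constant. The key fact is that the hypothesis yields a uniform positive lower bound on $\Leb\parens{A_{1,m}^{P'}(d, \Psi(d))}$ for infinitely many $d$, and then reverse Fatou takes over.

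Here is how I would obtain that lower bound. Fix any $d$ with $(\#P(d))^{1/m}\Psi(d)/d \geq \gd$, and split into two cases according to whether $\Psi(d) \leq ad/(2(\#P(d))^{1/m})$. If yes, then \th\ref{lem:measures} gives $\Leb\parens{A_{1,m}^{P'}(d,\Psi(d))} = \#P'(d)(2\Psi(d)/d)^m \geq c\cdot 2^m \gd^m$ using $\#P'(d)\geq c\#P(d)$. If no, then $A_{1,m}^{P'}(d,\Psi(d)) \supset A_{1,m}^{P'}(d, ad/(2(\#P(d))^{1/m}))$, and the inner set, being of the maximal disjoint-ball radius in~\eqref{eq:8}, has measure $\#P'(d) a^m/\#P(d) \geq ca^m$ by \th\ref{lem:measures} once more. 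Setting $\eta := \min(c\cdot 2^m \gd^m,\, ca^m) > 0$, we conclude $\Leb\parens{A_{1,m}^{P'}(d,\Psi(d))} \geq \eta$ whenever $(\#P(d))^{1/m}\Psi(d)/d \geq \gd$.

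For the first assertion, the hypothesis produces infinitely many such $d$, so reverse Fatou on the probability space $\TT^m$ yields
\begin{equation*}
  \Leb\parens{W_{1,m}^{P'}(\Psi)} \;\geq\; \limsup_{d\to\infty}\Leb\parens{A_{1,m}^{P'}(d,\Psi(d))} \;\geq\; \eta.
\end{equation*}
Since $W_{1,m}^{P'}(\Psi) \subset W_{1,m}^P(\Psi)$, we get $\Leb\parens{W_{1,m}^P(\Psi)} > 0$, and then \th\ref{thm:bootstrap}(a) gives $\Leb\parens{W_{n,m}^P(\psi)} > 0$. For the second assertion, running the identical case analysis with $\Psi_Q$ in place of $\Psi$ shows that $\Leb\parens{W_{1,m}^P(\Psi_Q)} \geq \eta$ for every $Q$ satisfying~\eqref{eq:49}. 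The definition~\eqref{eq:Psidef} of $\Psi_Q$ makes $\Psi_Q(d)$ nonincreasing in $Q$, hence so is $\Leb\parens{W_{1,m}^P(\Psi_Q)}$. As the hypothesis supplies arbitrarily large $Q$'s satisfying~\eqref{eq:49}, monotonicity then forces $\Leb\parens{W_{1,m}^P(\Psi_Q)} \geq \eta$ for \emph{every} $Q \geq 1$, so that $\inf_Q \Leb\parens{W_{1,m}^P(\Psi_Q)} \geq \eta > 0$, and \th\ref{thm:bootstrap11}(a) concludes $\Leb\parens{W_{n,m}^P(\psi)} = 1$.

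The only real subtlety is the two-case split used to secure the lower bound $\eta$: it is needed because the hypothesis controls $\Psi(d)$ from below only, while \th\ref{lem:measures} requires a simultaneous upper bound on the radius to guarantee disjoint balls; truncating at the maximal disjoint radius converts the would-be difficulty (large $\Psi$) into an even stronger lower bound on measure. Everything else is a routine invocation of the already-established bootstraps.
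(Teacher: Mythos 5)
Your proposal has a structural flaw that cannot be repaired within the logic of the paper: it is circular. \th\ref{claim:c} lives in the ``Reductions'' section, whose explicit purpose is to dispose of the cases that the proofs of \th\ref{thm:bootstrap} and \th\ref{thm:bootstrap11} cannot handle directly. Concretely, the proof of \th\ref{thm:bootstrap}(a) in \S\ref{sec:1a} invokes \th\ref{claim:c} to justify the standing assumption $(\#P(d))^{1/m}\Psi(d)/d \to 0$ (see~\eqref{eq:47}), and the proof of \th\ref{thm:bootstrap11}(a) in \S\ref{sec:2a} invokes it again to secure~\eqref{eq:50}. The hypothesis of \th\ref{claim:c} is precisely the negation of that standing assumption, so deducing the claim from the bootstrap theorems amounts to assuming the conclusion in the very case at issue. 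A second, independent problem affects your full-measure part: \th\ref{thm:bootstrap11} carries the standing hypothesis $\lim_{d\to\infty}\#P(d)=\infty$, which \th\ref{claim:c} does not assume (contrast with \th\ref{claim:d}, where that hypothesis is added explicitly for the full-measure conclusion), so even absent circularity your argument would prove a weaker statement.

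To your credit, the preparatory estimates are sound and close in spirit to what is actually needed: your two-case bound showing $\Leb\parens{A_{1,m}^{P'}(d,\Psi(d))}\geq\eta$ whenever $(\#P(d))^{1/m}\Psi(d)/d\geq\gd$, the reverse-Fatou step, and the monotonicity of $Q\mapsto\Leb\parens{W_{1,m}^P(\Psi_Q)}$ are all correct. What is missing is a \emph{direct} passage to the $n$-by-$m$ setting. The paper does this with elementary tools only: for each of infinitely many $d$ it bounds $\sum_{\gcd(\q)=d}\Leb\parens{A_{n,m}^{P'}(\q,\psi(\q))}$ below by $c\,2^m(\#P(d))\Psi(d)^m/d^m\geq M>0$ (here~\eqref{eq:9} guarantees via \th\ref{lem:measures} that each $A_{1,m}^{P'}(\gcd(\q),\psi(\q))$ is a disjoint union of balls), uses \th\ref{lem:indA} to get pairwise independence of the sets $A_{n,m}^{P'}(\q,\psi(\q))$ with $\gcd(\q)=d$ (distinct such $\q$ are linearly independent), and then applies the Chung--Erd\H{o}s inequality (\th\ref{prop:erdoschung}) to conclude $\Leb\bigl(\bigcup_{\gcd(\q)=d}A_{n,m}^{P'}(\q,\psi(\q))\bigr)\geq M/(M+1)$ for infinitely many $d$, whence positive measure of the limsup. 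For the full-measure part it repeats this with $\psi_Q$, intersects everything with an arbitrary open $U$ using \th\ref{lem:regularity} to keep a factor $\tfrac13\Leb(U)$ per set, obtains $\Leb\parens{W_{n,m}^P(\psi)\cap U}\geq\tfrac{1}{9}\tfrac{M}{M+1}\Leb(U)^2$, and finishes with \th\ref{lem:lebesguedensity}. If you replace your appeal to the bootstrap theorems with this Chung--Erd\H{o}s argument, your computation of the lower bound $\eta$ slots in naturally as the source of the constant $M$.
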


\begin{proof}
  Let $P'$ be as in Definition~\ref{def:well}. The
  assumption~\eqref{eq:9} guarantees that
  \begin{equation*}
    \Leb\parens*{A_{n,m}^{P'}(\q, \psi(\q))} = \#P'(\gcd(\q))\parens*{\frac{2\psi(\q)}{\gcd(\q)}}^{m} \geq c\#P(\gcd(\q))\parens*{\frac{2\psi(\q)}{\gcd(\q)}}^{m} 
  \end{equation*}
  for every $\q$ and therefore
  \begin{equation*}
    \sum_{\gcd(\q)=d}\Leb\parens*{A_{n,m}^{P'}(\q, \psi(\q))} \geq \sum_{\gcd(\q)=d}c\#P(d)\parens*{\frac{2\psi(\q)}{d}}^{m}  = c\#P(d)\parens*{\frac{2}{d}}^{m} \Psi(d)^m.
  \end{equation*}
  The assumption on $\Psi$ implies that there exists a number $M:=M_\gd >0$ and  an
  integer sequence $1\leq d_1 < d_2 < d_3 <\dots$ such that
  \begin{equation*}
    M \leq \sum_{\gcd(\q)=d_k}\Leb\parens*{A_{n,m}^{P'}(\q, \psi(\q))} < \infty\qquad (k \geq 1).
  \end{equation*}
  By \th\ref{prop:erdoschung}, for each $d \in \set{d_k}_{k\geq 1}$
  one has
  \begin{equation}\label{eq:14}
    \Leb\parens*{\bigcup_{\gcd(\q)=d}A_{n,m}^{P'}(\q, \psi(\q))}
    \geq \frac{\parens*{\sum_{\gcd(\q)=d} \Leb\parens*{A_{n,m}^{P'}(\q, \psi(\q))}}^2}{\sum_{\gcd(\q),\gcd(\br)=d}\Leb\parens*{A_{n,m}^{P'}(\q,\psi(\q))\cap A_{n,m}^{P'}(\br,\psi(\br)))}}
  \end{equation}
  and by \th\ref{lem:indA},
  \begin{multline*}
    \sum_{\gcd(\q),\gcd(\br)=d} \Leb\parens*{A_{n,m}^{P'}(\q,\psi(\q))\cap A_{n,m}^{P'}(\br,\psi(\br)))} \\
    = \sum_{\gcd(\q)=d} \Leb\parens*{A_{n,m}^{P'}(\q,\psi(\q))}  + \sum_{\substack{\gcd(\q),\gcd(\br)=d \\ \q \neq \br}} \Leb\parens*{A_{n,m}^{P'}(\q,\psi(\q))}\Leb\parens*{A_{n,m}^{P'}(\br,\psi(\br)))} \\
    \leq \sum_{\gcd(\q)=d} \Leb\parens*{A_{n,m}^{P'}(\q,\psi(\q))}  + \parens*{\sum_{\gcd(\q)=d} \Leb\parens*{A_{n,m}^{P'}(\q,\psi(\q))}}^2.
  \end{multline*}
  Putting this into~\eqref{eq:14} leads to
  \begin{align*}
    \Leb\parens*{\bigcup_{\gcd(\q)=d}A_{n,m}^{P'}(\q, \psi(\q))}
    &\geq \frac{\sum_{\gcd(\q)=d} \Leb\parens*{A_{n,m}^{P'}(\q, \psi(\q))}}{1 + \sum_{\gcd(\q)=d} \Leb\parens*{A_{n,m}^{P'}(\q, \psi(\q))}} \\
    &\geq \frac{M}{M + 1} > 0,
  \end{align*}
  for all $d\in\set{d_k}_{k\geq 1}$. Consequently, the limsup set
  associated to the sets $A_{n,m}^{P'}(\q, \psi(\q))$ with
  $\gcd(\q)\in\set{d_k}_{k\geq 1}$ has measure at least $M/(M+1)$, and
  so does $W_{n,m}^P(\psi)$. This proves the first part of the
  claim.
  
  For the second part, let $U\subset\TT^{nm}$ be an open set and let
  $Q_0\geq 1$ be such that~(\ref{eq:48}) holds for all
  $\abs{\q/\gcd(\q)}\geq Q_0$, as per
  \th~\ref{lem:regularity}. Suppose $Q\geq Q_0$ is such
  that~(\ref{eq:49}) holds and note that
  \begin{equation*}
    \Psi_Q(d) = \parens*{\sum_{\gcd(\q)=d}\psi_Q(\q)^m}^{1/m}
  \end{equation*}
  where
  \begin{equation*}
    \psi_Q(\q) =
    \begin{cases} \psi(\q) &\textrm{ if } \abs*{\frac{\q}{\gcd(\q)}}
                             \geq Q \\ 0 &\textrm{ otherwise }.
    \end{cases}
  \end{equation*}
  As in the first part, there is an integer sequence
  $1\leq d_1 < d_2 < \dots$ such that for all
  $d\in \set{d_k}_{k\geq 1}$,
  \begin{equation*}
    \sum_{\gcd(\q) = d} \Leb\parens*{A_{n,m}^{P'}(\q, \psi_Q(\q))} \geq M,
  \end{equation*}
  and taking \th~\ref{lem:regularity} into account,
  \begin{equation*}
    \sum_{\gcd(\q) = d} \Leb\parens*{A_{n,m}^{P'}(\q, \psi_Q(\q))\cap U} \geq \frac{1}{3}\Leb(U) M.
  \end{equation*}
  Now
  \begin{multline*}
    \Leb\parens*{\bigcup_{\gcd(\q)=d}A_{n,m}^{P'}(\q, \psi_Q(\q)) \cap U} \\
    \geq \frac{\parens*{\sum_{\gcd(\q)=d} \Leb\parens*{A_{n,m}^{P'}(\q, \psi_Q(\q))\cap U}}^2}{\sum_{\gcd(\q),\gcd(\br)=d}\Leb\parens*{A_{n,m}^{P'}(\q,\psi_Q(\q))\cap A_{n,m}^{P'}(\br,\psi_Q(\br)))\cap U}} \\
    \geq \frac{1}{9}\Leb(U)^2\frac{\parens*{\sum_{\gcd(\q)=d} \Leb\parens*{A_{n,m}^{P'}(\q, \psi_Q(\q))}}^2}{\sum_{\gcd(\q),\gcd(\br)=d}\Leb\parens*{A_{n,m}^{P'}(\q,\psi_Q(\q))\cap A_{n,m}^{P'}(\br,\psi_Q(\br)))}} \\
    \geq \frac{M}{9(M+1)}\Leb(U)^2,
  \end{multline*}
  after following the same reasoning as in the first part. This
  implies that
  \begin{equation*}
    \Leb\parens{W_{n,m}^{P'}(\psi)\cap U} \geq \frac{1}{9}\parens*{\frac{M}{M + 1}}\Leb(U)^2,
  \end{equation*}
  and therefore $\Leb\parens{W_{n,m}^P(\psi)}=1$ by an application of
  \th~\ref{lem:lebesguedensity}.
\end{proof}

\begin{claim}\th\label{claim:d}
  Assume $P$ is relatively uniformly discrete and that
  \begin{equation}
    \psi(\q)  \leq  \frac{b}{2}\qquad (\q\in\Zpos^n)
  \end{equation}
  where $b>0$ is as in~(\ref{eq:udiscrete}), and $\Psi(d) < \infty$
  for all $d\geq 1$. If there exists $\gd>0$ such that
  \begin{equation*}
    \limsup_{d\to\infty}\frac{\Psi(d)}{d} > \gd,
  \end{equation*}
  then $\Leb\parens*{W_{n,m}^P(\psi)} >0$. If, in addition,
  $\#P(d)\to \infty$ as $d\to\infty$, then
  $\Leb\parens*{W_{n,m}^P(\psi)} = 1$.
\end{claim}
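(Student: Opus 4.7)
The plan is to mirror the proof of \th\ref{claim:c}, substituting the relative uniform discreteness of $P$ for its relative well-spreadness. Let $P'\subset P$ be as in \th\ref{def:well}, and observe that the hypothesis $\psi(\q)\leq b/2$ makes the balls comprising $A_{1,m}^{P'}(\gcd(\q),\psi(\q))$ pairwise disjoint, so \th\ref{lem:measures} yields
\begin{equation*}
  \sum_{\gcd(\q)=d}\Leb\parens*{A_{n,m}^{P'}(\q,\psi(\q))}=\#P'(d)\parens*{\frac{2}{d}}^m\Psi(d)^m\geq c\,\#P(d)\parens*{\frac{2\Psi(d)}{d}}^m.
\end{equation*}
I would then use pairwise independence of the sets $A_{n,m}^{P'}(\q,\psi(\q))$ as $\q$ ranges over vectors with a common $\gcd$, which follows from \th\ref{lem:indA} since distinct such $\q$ are automatically linearly independent.

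For the first part of the claim, along the sequence of $d$'s where $\Psi(d)/d>\delta$, the above sum is at least $c(2\delta)^m=:M>0$ and finite (by $\Psi(d)<\infty$). The Chung--Erd\H{o}s lemma \th\ref{prop:erdoschung}, applied exactly as in \th\ref{claim:c}, then gives $\Leb(\bigcup_{\gcd(\q)=d}A_{n,m}^{P'}(\q,\psi(\q)))\geq M/(M+1)$, and passing to the limsup yields the desired positive-measure lower bound on $W_{n,m}^P(\psi)$.

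For the second part, I would upgrade to full measure by the device used in the second half of \th\ref{claim:c}'s proof. Fix an open set $U\subset\TT^{nm}$, let $Q_0=Q_0(U)$ be as in \th\ref{lem:regularity}, and pass to the truncation $\psi_{Q_0}$ supported where $\abs{\q/\gcd(\q)}\geq Q_0$. Because $\psi\leq b/2$ and at most $(Q_0)^n$ vectors $\q$ with $\gcd(\q)=d$ are discarded, the loss $\Psi(d)^m-\Psi_{Q_0}(d)^m$ is bounded by a constant, and for $d$ sufficiently large in the sequence we have $\Psi_{Q_0}(d)\geq (\delta/2^{1/m})\,d$. Consequently $M_d:=\sum_{\gcd(\q)=d}\Leb(A_{n,m}^{P'}(\q,\psi_{Q_0}(\q)))\geq c\cdot 2^{m-1}\delta^m\,\#P(d)$, and the hypothesis $\#P(d)\to\infty$ forces $M_d\to\infty$. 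Invoking \th\ref{lem:regularity} inside the Chung--Erd\H{o}s computation exactly as in \th\ref{claim:c} produces
\begin{equation*}
  \Leb\parens*{\bigcup_{\gcd(\q)=d}A_{n,m}^{P'}(\q,\psi_{Q_0}(\q))\cap U}\geq \frac{\Leb(U)^2}{9}\cdot\frac{M_d}{M_d+1}.
\end{equation*}
Since $M_d/(M_d+1)\to 1$, the limsup satisfies $\Leb(W_{n,m}^P(\psi)\cap U)\geq \Leb(U)^2/9$, and \th\ref{lem:lebesguedensity} with $f(x)=x^2/9$ then delivers $\Leb(W_{n,m}^P(\psi))=1$.

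The main obstacle is controlling the $Q_0$-truncation in the full-measure step. The hypothesis $\psi\leq b/2$ is what rescues us: it bounds the excluded contribution by a constant depending only on $Q_0,n,b,m$, which is negligible compared to $\Psi(d)^m\geq \delta^m d^m$ for large $d$ in the sequence. Everything else is a direct translation of the corresponding steps in the proof of \th\ref{claim:c}.
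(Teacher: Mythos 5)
Your proof is correct, and its first half is exactly the paper's argument: disjointness from $\psi(\q)\leq b/2$, the measure computation via \th\ref{lem:measures}, pairwise independence from \th\ref{lem:indA} (distinct $\q\in\Zpos^n$ with equal $\gcd$ are indeed linearly independent), and the Chung--Erd\H{o}s bound $M/(M+1)$ passed through the limsup. Where you diverge is the full-measure step. The paper does not localize at all: it simply observes that the Chung--Erd\H{o}s lower bound it has already derived equals $\delta^m\#P'(d)/(\delta^m\#P'(d)+1)$, which tends to $1$ along the subsequence because $\#P'(d)\geq c\,\#P(d)\to\infty$; since the measure of a limsup dominates the limsup of the measures, full measure follows in one line, with no open set $U$, no \th\ref{lem:regularity}, and no \th\ref{lem:lebesguedensity}. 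You instead import the localization machinery from the second half of \th\ref{claim:c}, which forces you to deal with the $Q_0$-truncation; your resolution of that (the discarded $\q=d\q'$ with $\abs{\q'}<Q_0$ number at most $Q_0^n$ and each contributes at most $(b/2)^m$, which is negligible against $\Psi(d)^m\geq\delta^m d^m$) is correct and is the one genuinely new ingredient your route requires. The trade-off: the paper's argument is shorter and uses $\#P(d)\to\infty$ exactly where it is needed, while your argument is heavier but arguably more robust --- note that in your final bound $\tfrac{1}{9}\Leb(U)^2\,M_d/(M_d+1)$ you only need $M_d$ bounded below by a $U$-independent constant for \th\ref{lem:lebesguedensity} to fire, so the hypothesis $\#P(d)\to\infty$ enters your proof only cosmetically (to make the constant $1/9$ rather than $\tfrac{M_0}{9(M_0+1)}$). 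Both routes are valid proofs of the claim as stated.
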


\begin{proof}
  The assumption implies that there are infinitely many $d\geq 1$ such
  that
  \begin{equation}\label{eq:55}
    \Psi(d)^m  = \sum_{\gcd(\q)=d}\psi(\q)^m \geq (\delta d)^m.
  \end{equation}
  Let $P'$ be a refinement of $P$ as in \th~\ref{def:well}. Since
  $\psi(\q) \leq b/2$, the set $A_{1,m}^{P'}(\gcd(\q), \psi(\q))$ is a
  union of disjoint balls, and \th\ref{lem:measures} implies
  \begin{equation}\label{eq:16}
    \sum_{\gcd(\q)=d} \Leb\parens*{A_{n,m}^{P'}(\q, \psi(\q))} = \sum_{\gcd(\q)=d}(\#P'(d))\parens*{\frac{\psi(\q)}{d}}^m  \geq  \delta^m (\#P'(d)) 
  \end{equation}
  for each $d\geq 1$ satisfying~(\ref{eq:55}). By
  \th\ref{prop:erdoschung}, for each $d\geq 1$
  satisfying~\eqref{eq:16}, one has
  \begin{equation}\label{eq:17}
    \Leb\parens*{\bigcup_{\gcd(\q)=d}A_{n,m}^{P'}(\q, \psi(\q))}
    \geq \frac{\parens*{\sum_{\gcd(\q)=d} \Leb\parens*{A_{n,m}^{P'}(\q, \psi(\q))}}^2}{\sum_{\gcd(\q),\gcd(\br)=d}\Leb\parens*{A_{n,m}^{P'}(\q,\psi(\q))\cap A_{n,m}^{P'}(\br,\psi(\br)))}}.
  \end{equation}
  By \th\ref{lem:indA},
  \begin{multline*}
    \sum_{\gcd(\q),\gcd(\br)=d} \Leb\parens*{A_{n,m}^{P'}(\q,\psi(\q))\cap A_{n,m}^{P'}(\br,\psi(\br)))} \\
    = \sum_{\gcd(\q)=d} \Leb\parens*{A_{n,m}^{P'}(\q,\psi(\q))}  + \sum_{\substack{\gcd(\q),\gcd(\br)=d \\ \q \neq \br}} \Leb\parens*{A_{n,m}^{P'}(\q,\psi(\q))}\Leb\parens*{A_{n,m}^{P'}(\br,\psi(\br)))} \\
    \leq \sum_{\gcd(\q)=d} \Leb\parens*{A_{n,m}^{P'}(\q,\psi(\q))}  + \parens*{\sum_{\gcd(\q)=d} \Leb\parens*{A_{n,m}^{P'}(\q,\psi(\q))}}^2.
  \end{multline*}
  Putting this into~\eqref{eq:17} gives
  \begin{align}
    \Leb\parens*{\bigcup_{\gcd(\q)=d}A_{n,m}^{P'}(\q, \psi(\q))} 
    &\geq \frac{\sum_{\gcd(\q)=d} \Leb\parens*{A_{n,m}^{P'}(\q, \psi(\q))}}{1 + \sum_{\gcd(\q)=d} \Leb\parens*{A_{n,m}^{P'}(\q, \psi(\q))}} \nonumber \\
    &\geq \frac{\delta^m(\#P'(d))}{\delta^m(\#P'(d)) + 1} > 0,\label{eq:52}
  \end{align}
  for all $d\geq 1$ for which~\eqref{eq:16} holds. Since there are
  infinitely many such $d\geq 1$, and since for each such $d\geq 1$
  one has $\#P'(d)\geq 1$, it follows that in this case
  $\Leb\parens*{W_{n,m}(\psi)}>0$ and the theorem is proved. Under the
  additional assumption that $\#P(d)\to \infty,\, (d\to\infty)$, we
  have that $\#P'(d) \to \infty$, and the expression in~(\ref{eq:52})
  approaches $1$. From here, it follows that $\Leb\parens{W_{n,m}^P(\psi)}=1$.
\end{proof}

\section{Proof of \th\ref{thm:bootstrap}(a)}
\label{sec:1a}

By \th\ref{claim:psivsPalt}, it may be assumed that~\eqref{eq:9}
holds for all but finitely many $\q$.  No generality is lost in
further assuming that~\eqref{eq:9} holds for all
$\q\in\Zpos^n$. \th\ref{prop:Psifinite} and
\th\ref{claim:c} show that it is safe to assume that $\Psi(d)$ is
always finite and that
\begin{equation}\label{eq:47}
  (\#P(d))^{1/m}\frac{\Psi(d)}{d} \to 0 \qquad (d\to\infty).
\end{equation}
By scaling $\psi$ if necessary, it may be assumed that
\begin{equation}\label{eq:10}
  \Psi(d)  \leq  \frac{a}{2}\parens*{\frac{d}{(\#P(d))^{1/m}}}\qquad (d\geq 1)
\end{equation}
where $a>0$ is as in~\eqref{eq:well-spread}. Note that such a scaling
of $\Psi$ does not alter the measure of $W_{1,m}^{P}(\Psi)$, by
Cassels' lemma (\th\ref{lem:cassels}).

It is assumed that $\Leb\parens{W_{1,m}^P(\Psi)}>0$.  For each
$d\geq 1$, let $P'(d)$ be a refinement of $P(d)$ as in
\th\ref{def:well}, and then augment $P'(d)$ by including points of
$P(d)$ until $P'(d)$ is maximal with respect to the requirement that
the balls
\begin{equation*}
  B\parens*{\frac{\p}{d}, \frac{\Psi(d)}{d}}\qquad (\p \in P'(d))
\end{equation*}
be disjoint in $\TT^m$. Now $A_{1,m}^{P'}(d, \Psi(d))$ is a union of
$\#P'(d) \geq c \#P(d)$ disjoint balls in $\TT^m$ such that every ball
from $A_{1,m}^P(d, \Psi(d))$ is intersected. In particular,
\begin{equation*}
  A_{1,m}^P(d, \Psi(d)) \subset A_{1,m}^{P'}(d, 3\Psi(d)),
\end{equation*}
which implies that $\Leb\parens{W_{1,m}^{P'}(3\Psi)} \geq \Leb\parens{W_{1,m}^{P}(\Psi)}>0$. Then, by
Cassels' lemma (\th\ref{lem:cassels}), we have $\Leb\parens{W_{1,m}^{P'}(\Psi)} \geq \Leb\parens{W_{1,m}^{P}(\Psi)} >0.$

By~\eqref{eq:47} and~(\ref{eq:10}) we have~(\ref{eq:54})
and~(\ref{eq:41}), so~\th\ref{lem:BVtranslation} applies. It gives a
refinement $R(d)\subset P'(d)$ such that the sets
$A_{1,m}^{R}(d, \Psi(d))$ have a diverging measure sum and are
quasi-independent on average, that is, there exists a constant $C>0$
and infinitely many $D\geq 1$ such that
\begin{equation}\label{eq:20}
  \sum_{k,\ell = 1}^D\Leb\parens*{A_{1,m}^{R}(k, \Psi(k)) \cap A_{1,m}^{R}(\ell, \Psi(\ell))} \leq C \parens*{\sum_{d=1}^D \Leb\parens*{A_{1,m}^{R}(d, \Psi(d))}}^2.
\end{equation}
It is worth recalling that for each $d\geq 1$, the balls constituting
set $A_{1,m}^R(d,\Psi(d))$ are disjoint.

(We also point out that we may take
\begin{equation}\label{eq:btw}
 C = K\kappa^{-2} \qquad\textrm{where}\qquad   0 < \kappa \leq \Leb(W_{1,m}^P(\Psi)),
\end{equation}
and $K>0$ is a constant. This observation is not needed here, but will
be useful in the proof of \th\ref{thm:bootstrap11} in \S\ref{sec:2a}.)

The aim now is to show that the sets $A_{n,m}^{R}(\q, \psi(\q))$ are
quasi-independent on average. A bound is needed on
\begin{multline}\label{eq:18}
  \sum_{\gcd(\q), \gcd (\br) \leq D}  \Leb\parens*{A_{n,m}^{R}(\q, \psi(\q))\cap A_{n,m}^{R}(\br, \psi(\br))} \\
  = \sum_{\substack{\gcd(\q), \gcd (\br) \leq D \\ \q \not\parallel\br}}  \Leb\parens*{A_{n,m}^{R}(\q, \psi(\q))\cap A_{n,m}^{R}(\br, \psi(\br))}\\
  + \sum_{\substack{\gcd(\q), \gcd (\br) \leq D \\ \q \parallel \br}}  \Leb\parens*{A_{n,m}^{R}(\q, \psi(\q))\cap A_{n,m}^{R}(\br, \psi(\br))}.
\end{multline}
By \th\ref{lem:indA},
\begin{multline}\label{eq:19}
  \sum_{\substack{\gcd(\q), \gcd (\br) \leq D \\ \q \not\parallel\br}}  \Leb\parens*{A_{n,m}^{R}(\q, \psi(\q))\cap A_{n,m}^{R}(\br, \psi(\br))}\\
  = \sum_{\substack{\gcd(\q), \gcd (\br) \leq D \\ \q \not\parallel\br}}  \Leb\parens*{A_{n,m}^{R}(\q, \psi(\q))}\Leb\parens*{A_{n,m}^{R}(\br, \psi(\br))}\\
  \leq  \parens*{\sum_{\gcd(\q) \leq D}  \Leb\parens*{A_{n,m}^{R}(\q, \psi(\q))}}^2
\end{multline}
Evidently, only the last term~\eqref{eq:18} needs to be controlled. We
express it as
\begin{multline*}
  \sum_{\substack{\gcd(\q), \gcd (\br) \leq D \\ \q \parallel \br}}  \Leb\parens*{A_{n,m}^{R}(\q, \psi(\q))\cap A_{n,m}^{R}(\br, \psi(\br))}\\
  =
  \sum_{k,\ell = 1}^D\sum_{\substack{\gcd(\q) = k \\ \gcd (\br) = \ell \\ \q \parallel \br}} \Leb\parens*{A_{n,m}^{R}(\q, \psi(\q))\cap A_{n,m}^{R}(\br, \psi(\br))}.
\end{multline*}
Letting $\q' = \q/\gcd(\q) = \br/\gcd(\br)$, notice that
\begin{equation*}
  A_{n,m}^{R}(\q, \psi(\q))\cap A_{n,m}^{R}(\br, \psi(\br)) = T^{-1}\parens*{A_{1,m}^{R}(k, \psi(\q))\cap A_{1,m}^{R}(\ell, \psi(\br))},
\end{equation*}
where $T:\TT^{mn}\to\TT^m$ is the measure-preserving projection
$\X\mapsto\q'\X$. Hence, 
\begin{multline}\label{eq:21}
  \sum_{\substack{\gcd(\q), \gcd (\br) \leq D \\ \q \parallel \br}}  \Leb\parens*{A_{n,m}^{R}(\q, \psi(\q))\cap A_{n,m}^{R}(\br, \psi(\br))}\\
  =
  \sum_{k , \ell = 1}^D \sum_{\substack{\gcd(\q)=k \\ \gcd (\br) =\ell \\ \q \parallel \br}}  \Leb\parens*{A_{1,m}^{R}(k, \psi(\q))\cap A_{1,m}^{R}(\ell, \psi(\br))}\\
  \overset{\textrm{\th\ref{lem:dilation}}}{\leq}
  \sum_{k,\ell=1}^D \sum_{\substack{\gcd(\q)=k \\ \gcd (\br) =\ell \\ \q \parallel \br}}  \max\set*{\frac{\psi(\q)}{\Psi(k)}, \frac{\psi(\br)}{\Psi(\ell)}}^m\Leb\parens*{A_{1,m}^{R}(k, \Psi(k))\cap A_{1,m}^{R}(\ell, \Psi(\ell))} \\
  =
  \sum_{k,\ell=1}^D \Leb\parens*{A_{1,m}^{R}(k, \Psi(k))\cap A_{1,m}^{R}(\ell, \Psi(\ell))}\underbrace{\sum_{\substack{\gcd(\q)=k \\ \gcd (\br) =\ell \\ \q \parallel \br}}  \max\set*{\frac{\psi(\q)}{\Psi(k)}, \frac{\psi(\br)}{\Psi(\ell)}}^m}_{\leq 2}
\end{multline}
Combining~\eqref{eq:20},~\eqref{eq:18},~\eqref{eq:19},
and~\eqref{eq:21}, there are infinitely many $D\geq 1$ for which
\begin{multline}\label{eq:22}
  \sum_{\gcd(\q), \gcd (\br) \leq D}  \Leb\parens*{A_{n,m}^{R}(\q,
    \psi(\q))\cap A_{n,m}^{R}(\br, \psi(\br))} \\
    \leq \parens*{\sum_{\gcd(\q) \leq D}  \Leb\parens*{A_{n,m}^{R}(\q, \psi(\q))}}^2 + 2C \parens*{\sum_{d=1}^D \Leb\parens*{A_{1,m}^{R}(d, \Psi(d))}}^2.
\end{multline}
Finally, note that
\begin{align*}
  \sum_{d = 1}^D \Leb\parens*{A_{1,m}^{R}(d, \Psi(d))}
  &= \sum_{d = 1}^D \#R(d) \parens*{\frac{\Psi(d)}{d}}^m \\
  &= \sum_{d = 1}^D \sum_{\gcd(\q)=d} \#R(d) \parens*{\frac{\psi(\q)}{d}}^m \\
  &= \sum_{\gcd(\q) \leq D}  \Leb\parens*{A_{n,m}^{R}(\q, \psi(\q))}.
\end{align*}
Putting this into~\eqref{eq:22} shows that the sets
$A_{n,m}^{R}(\q, \psi(\q))$ are quasi-independent on average, that is,
there exists a constant $C'>0$ and infinitely many $D\geq 1$ such that
\begin{equation}\label{eq:23}
  \sum_{\gcd(\q), \gcd (\br) \leq D}  \Leb\parens*{A_{n,m}^{R}(\q, \psi(\q))\cap A_{n,m}^{R}(\br, \psi(\br))}
  \leq C' \parens*{\sum_{\gcd(\q) \leq D}  \Leb\parens*{A_{n,m}^{R}(\q, \psi(\q))}}^2. 
\end{equation}
Finally, by \th\ref{prop:qia}, their associated limsup set has
positive measure. This implies that
$\Leb\parens*{W_{n,m}^P(\psi)} > 0$. \qed

\begin{remark*}
  Evidently, we may take
  \begin{equation}
    \label{eq:btw2}
    C' = 1 + 2K\gk^{-2}
  \end{equation}
  for any $0 < \gk \leq \Leb(W_{1,m}^P(\Psi))$, where $K>0$ is a
  constant only depending on $\TT^m$ as in~(\ref{eq:btw}).
\end{remark*}

\section{Proof of \th\ref{thm:bootstrap}(b)}
\label{sec:1b}

Since it is assumed that $\psi$ is bounded, no generality is lost in
assuming that $\psi(\q) \leq b/2$ for all $\q$, where $b>0$ is as
in~(\ref{eq:udiscrete}), that is,
\begin{equation*}
  \abs*{\p_1 - \p_2} \geq b \qquad (d\geq 1, \p_1, \p_2 \in P'(d), \p_1\neq \p_2)
\end{equation*}
with $P'(d)\subset P(d)$ the refinement in
Definition~\ref{def:well}. After all, one can always scale $\psi$ by a
constant, resulting in a correspondingly scaled $\Psi$. By Cassels'
lemma (\th\ref{lem:cassels}), $\Leb(W_{1,m}^P(\Psi))$ is unchanged. By
\th\ref{prop:Psifinite} it may be assumed that $\Psi(d)$ is finite for
every $d\geq 1$, and by \th\ref{claim:d} it may be assumed that
$\Psi(d)=o(d)$. It is assumed that $\Leb\parens{W_{1,m}^P(\Psi)}>0$.

Since $A_{1,m}^P(d, \Psi(d))$ is a union of finitely many balls in the
torus $\TT^m$, Vitali's covering lemma (\th\ref{lem:vitali})
guarantees that that collection of balls contains a disjoint
subcollection whose three-fold dilates cover the original. In other
words, there exists a subset $Q(d)\subset P(d)$ such that
$A_{1,m}^{Q}(d, \Psi(d))$ is a union of finitely many \emph{disjoint}
balls and such that
\begin{equation*}
  \Leb\parens*{A_{1,m}^{Q}(d, \Psi(d))} \geq 3^{-m}\Leb\parens*{A_{1,m}^{P}(d, \Psi(d))}. 
\end{equation*}
In particular,
\begin{equation*}
  \sum_{d\geq 1} \Leb\parens*{A_{1,m}^{Q}(d, \Psi(d))} = \infty. 
\end{equation*}
Furthermore, since
\begin{equation*}
  A_{1,m}^{Q}(d, 3\Psi(d)) \supset A_{1,m}^{P}(d, \Psi(d)),
\end{equation*}
we have
\begin{equation*}
  \Leb\parens*{W_{1,m}^{Q}(3\Psi)}  \geq \Leb\parens*{W_{1,m}^{P}(\Psi)} >0,
\end{equation*}
hence, by Cassels' lemma (\th\ref{lem:cassels}), $\Leb\parens{W_{1,m}^{Q}(\Psi)} \geq \Leb\parens{W_{1,m}^{P}(\Psi)} >0$.

Since the balls constituting $A_{1,m}^Q(d,\Psi(d))$ are pairwise
disjoint, condition~(\ref{eq:41}) is satisfied by $Q$ and $\Psi$.
Now, by \th\ref{lem:BVtranslation}, we may further refine $Q$ to
$R(d)\subset Q(d)$ in such a way that the sets
$A_{1,m}^{R}(d, \Psi(d))$ have a diverging measure sum and are
quasi-independent on average, that is, there exists a constant $C>0$
and infinitely many $D\geq 1$ such that
\begin{equation}
  \sum_{k,\ell = 1}^D\Leb\parens*{A_{1,m}^{R}(k, \Psi(k)) \cap A_{1,m}^{R}(\ell, \Psi(\ell))} \leq C \parens*{\sum_{d=1}^D \Leb\parens*{A_{1,m}^{R}(d, \Psi(d))}}^2.
\end{equation}
The aim now is to show that the sets $A_{n,m}^{R}(\q, \psi(\q))$ are
quasi-independent on average. It is established exactly as in the
Proof of \th\ref{thm:bootstrap}(a) from~\eqref{eq:18} on,
verbatim. \qed

\section{Proof of \th\ref{thm:bootstrap11}(a)}
\label{sec:2a}

By \th\ref{prop:Psifinite} we may assume that $\Psi(d)$ and
$\Psi_Q(d)$ are always finite. By \th~\ref{claim:psivsPalt}, it may be
assumed that for $Q_0 \geq 1$ sufficiently large~(\ref{eq:9}) holds
for all but finitely many $\q\in\Zpos^n$ having
$\abs{\q/\gcd(\q)}\geq Q_0$. Select such a $Q_0\geq 1$, and note that
we lose no generality in assuming that~(\ref{eq:9}) is satisfied by
\emph{all} $\q\in\Zpos^n$ having $\abs{\q/\gcd(\q)}\geq Q_0$. (This
entails altering $\psi(\q)$ for at most finitely many $\q$.)

Let $U\subset\TT^{mn}$ be an open set and, if necessary, enlarge
$Q_0\geq 1$ so that it is large enough that \th\ref{lem:regularity}
applies. By \th\ref{claim:c}, we may further enlarge $Q_0$ until it is
large enough that for all $Q\geq Q_0$ there exists $d_Q\geq 1$ such that
\begin{equation}\label{eq:50}
  \Psi_Q(d) \leq \frac{a}{2}\parens*{\frac{d}{(\#P(d))^{1/m}}}\qquad
  (d\geq d_Q),
\end{equation}
where $a>0$ is as in~(\ref{eq:well-spread}).

Define $\psi_Q$ by
\begin{equation}\label{eq:psi_Q}
  \psi_Q(\q) =
  \begin{cases} \psi(\q) &\textrm{ if } \abs*{\frac{\q}{\gcd(\q)}}
\geq Q \\ 0 &\textrm{ otherwise }.
  \end{cases}
\end{equation}
Then
\begin{equation}\label{eq:Psi_Q}
  \Psi_Q(d) =
  \parens*{\sum_{\gcd(\q)=d}\psi_Q(\q)^m}^{1/m}.
\end{equation}
It has been assumed that
$\Leb\parens{W_{1,m}^P(\Psi_Q)} \geq \gk > 0$, where $\gk>0$ is a
constant. By~(\ref{eq:50}) and the assumption that $\#P(d)\to \infty$,
we have $\Psi_Q(d)/d \to 0$. We are therefore in a position to follow
the proof of \th\ref{thm:bootstrap}(a) in \S\ref{sec:1a}. It leads to
a refinement $R\subset P'\subset P$ such that~(\ref{eq:23}) holds for
the sets $A_{n,m}^{R}(\q, \psi_Q(\q))$ and such that their measure sum
diverges. Specifically, there are infinitely many $D\geq 1$ for which
\begin{equation}\label{eq:24}
  \sum_{\gcd(\q), \gcd (\br) \leq D}  \Leb\parens*{A_{n,m}^{R}(\q, \psi_Q(\q))\cap A_{n,m}^{R}(\br, \psi_Q(\br))}
  \leq C' \parens*{\sum_{\gcd(\q)
      \leq D}
    \Leb\parens*{A_{n,m}^{R}(\q,
      \psi_Q(\q))}}^2,
\end{equation}
where $C' = 1 + K\gk^{-2}$, with $K>0$ a constant, as remarked
in~(\ref{eq:btw2}). By \th\ref{lem:regularity},
\begin{equation*}
  \Leb\parens*{A_{n,m}^{R}(\q, \psi_Q(\q))\cap U} \geq \frac{1}{3} \Leb\parens*{A_{n,m}^{R}(\q, \psi_Q(\q))}\Leb\parens*{U}
\end{equation*}
for all $\q\in\Zpos^n$. Therefore, there are infinitely many $D\geq 1$
for which
\begin{multline*}
  \sum_{\gcd(\q), \gcd (\br) \leq D}  \Leb\parens*{A_{n,m}^{R}(\q, \psi_Q(\q)) \cap
                                        A_{n,m}^{R}(\br,
                                        \psi_Q(\br))\cap U} \\
                                      \leq \sum_{\gcd(\q), \gcd (\br) \leq D}  \Leb\parens*{A_{n,m}^{R}(\q, \psi_Q(\q))\cap A_{n,m}^{R}(\br, \psi_Q(\br))} \\
                                      \overset{(\ref{eq:24})}{\leq} C' \parens*{\sum_{\gcd(\q)
                                        \leq D}
                                        \Leb\parens*{A_{n,m}^{R}(\q,
                                        \psi_Q(\q))}}^2 \\
                                      \leq \frac{9C'}{\Leb(U)^2}\parens*{\sum_{\gcd(\q)
                                        \leq D}
                                        \Leb\parens*{A_{n,m}^{R}(\q,
                                        \psi_Q(\q))\cap U}}^2.
\end{multline*}
By \th\ref{prop:qia},
\begin{equation*}
  \Leb\parens*{W_{n,m}^{R}(\psi_Q)\cap
    U} = \Leb\parens*{\limsup_{\abs{\q}\to\infty}A_{n,m}^{R}(\q,\psi_Q(\q))\cap
  U} \geq \frac{\Leb(U)^2}{9C'},
\end{equation*}
and since
$W_{n,m}^{R}(\psi_Q)\subset
W_{n,m}^{P}(\psi)$, it follows that
\begin{equation*}
  \Leb\parens*{W_{n,m}^P(\psi)\cap
    U} \geq \frac{\Leb(U)^2}{9C'}.
\end{equation*}
Finally, \th\ref{lem:lebesguedensity} implies that
$\Leb\parens*{W_{n,m}^P(\psi)}=1$, completing the proof. \qed

\section{Proof of \th\ref{thm:bootstrap11}(b)}
\label{sec:2b}

As in the proof of \th\ref{thm:bootstrap}(b) in \S\ref{sec:1b},
it may be assumed that $\psi(\q) \leq b/2$ for all $\q$, where
\begin{equation*}
  \abs*{\p_1 - \p_2} \geq b \qquad (d\geq 1, \p_1, \p_2 \in P(d), \p_1\neq \p_2),
\end{equation*}
and, by \th\ref{prop:Psifinite}, that $\Psi(d)$ is finite for every
$d\geq 1$. In fact, \th\ref{claim:d} implies that $\Psi(d)=o(d)$ may
be assumed. It is further assumed that
$\Leb\parens*{W_{1,m}^P(\Psi)}>0$. 

Let $U\subset\TT^{nm}$ be an open set, and $Q\geq 1$ be as in
\th\ref{lem:regularity}. Define $\psi_Q$ as in~(\ref{eq:psi_Q}) so
that~(\ref{eq:Psi_Q}) holds. Note that $\psi_Q(\q) \leq b/2$ for all
$\q\in\Zpos^n$, and that $\Psi_Q(d)<\infty$ for all $d\geq 1$, and
that $\Psi_Q(d) = o(d)$. It is further assumed that
$\inf_Q\Leb\parens{W_{1,m}^P(\Psi_Q)} > 0$. The arguments in
\S\ref{sec:1b} show that there is a refinement $R\subset P$ such that
the sets $A_{n,m}^{R}(\q, \psi_Q(\q))$ are quasi-independent on
average and have diverging measure sum. The rest of this proof
proceeds exactly as in the proof of \th\ref{thm:bootstrap11}(a),
starting from~(\ref{eq:24}). \qed

\subsection*{Acknowledgments}

I thank Victor Beresnevich, Sanju Velani, and Manuel Hauke for
valuable discussions about this work, and for hosting me at the
University of York in June 2024, during which time I polished a second
draft. I also thank the referee for a careful reading and helpful
comments.

\bibliographystyle{plain}

%\bibliography{../bibliography.bib}

\vfill

Wesleyan University

Department of Mathematics and Computer Science

Middletown, CT 06459

United States

\texttt{framirez@wesleyan.edu}

\end{document}